\newcommand{\Sone}{\mathcal{S}_1}
\newcommand{\Limb}{\mathcal{L}}
\newcommand{\R}{\mathbb{R}}
\newcommand{\Z}{\mathbb{Z}}
\newcommand{\C}{\mathbb{C}}
\newcommand{\CC}{\widehat{\C} }
\newcommand{\D}{\mathbb{D} }
\newcommand{\Q}{\mathbb{Q}}
\newcommand{\T}{\mathbb{T}}
\newcommand{\Hy}{\mathcal{H}}
\newcommand{\RL}{\mathfrak{R}}
\newcommand{\pray}{\mathcal{R}}
 \newcommand{\mate}{\perp \! \! \! \perp}
 \newcommand{\Bot}{\text{B\"{o}t}}
 \renewcommand{\Re}{\mathrm{Re}}
 \renewcommand{\Im}{\mathrm{Im}}
 \newcommand{\OC}{\text{\textcopyright}}
\newtheorem{thm}{Theorem}[section]
 \newtheorem{cor}[thm]{Corollary}
 \newtheorem{lem}[thm]{Lemma}
 \newtheorem{prop}[thm]{Proposition}
 \newtheorem{conj}[thm]{Conjecture}
 \theoremstyle{definition}
 \newtheorem{defn}[thm]{Definition}
 \theoremstyle{remark}
 \newtheorem{rem}[thm]{Remark}
 \newtheorem{ex}[thm]{Example}
\title[Matings in $\Sone$ and $\alpha$-symmetry]{Matings of cubic polynomials with a fixed critical point. Part II: $\alpha$-symmetry of limbs}
\author{Thomas Sharland}
\address{Department of Mathematics and Applied Mathematical Sciences, University of Rhode Island, RI 02881, U.S.A}
\email{tsharland@uri.edu}
\date{} 
\begin{document}

\begin{abstract}
Following Milnor, define $\Sone$ to be the space of monic, centred cubic polynomials with a marked fixed critical point. In this article, a sequel to \cite{CubicObs}, we provide a combinatorial sufficient (and conjecturally, necessary) condition (called $\alpha$-\emph{symmetry}) for the mating of two postcritically finite polynomials in $\Sone$ to be obstructed.  To do this, we study the rotation sets associated to the parameter limbs in the connectedness locus of $\Sone$, which allows us to determine when there exist ray classes in the formal mating which contain a closed loop.  We give a proof of the necessity of $\alpha$-symmetry for a particular subset of postcritically finite maps in $\Sone$. Many examples are given to illustrate the results of the paper.
\end{abstract}
 
 \keywords{Complex Dynamics, Matings, Cubic Polynomials, Rotation Sets}
 \subjclass[2010]{37F10,37F20,37F44}
 
\maketitle

\section{Introduction}

In this article we discuss the mating of two cubic polynomials with a fixed critical point. Informally, the mating construction allows us to take two complex polynomials $f$ and $g$ (along with their filled Julia sets $K(f)$ and $K(g)$) and paste them together to construct branched covering of the sphere. If this branched covering (or a suitable modification of it) is (Thurston) equivalent to a rational map, we say that $f$ and $g$ are mateable.

In the quadratic case, it was shown through work of Rees, Shishikura and Tan \cite{Tan:quadmat} that two postcritically finite polynomials with locally connected Julia sets were mateable if and only if they do not belong to conjugate limbs of the Mandelbrot set. In this case, the obstructions obtained are topological; the space obtained from pasting together the filled Julia sets fails to be a sphere. In other words, two such polynomials are mateable if and only if they are topologically mateable.

However, Shishikura and Tan \cite{ShishTan} showed that there exist cubic polynomials which are topologically mateable, but the resulting branched covering fails to be equivalent to a rational map. This indicates that the degree $3$ (and higher) case is more complicated. Restricting attention to $\Sone$, the space of monic, centred cubic polynomials with a marked fixed critical point, the present author showed in \cite{CubicObs} that any obstruction to the mating of postcritically finite polynomials in $\Sone$ is topological, paralleling the quadratic case. However, in contrast to the quadratic case, we will prove the following.

\begin{thm}\label{introthm}
 There exist postcritically finite polynomials $F_a$, $F_b$ in $\Sone$ which do not lie in conjugate limbs of $\Sone$ but for which $F_a$ and $F_b$ are not mateable.
\end{thm}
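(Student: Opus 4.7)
The plan is to obtain Theorem \ref{introthm} as an application of the paper's sufficiency result together with an explicit example of a pair $(F_a,F_b)$ that satisfies the limb condition yet avoids the conjugate limb condition. Once both ingredients are in hand, the theorem is immediate: the sufficiency gives an obstruction to the mating, and the non-conjugate-limb assertion witnesses the contrast with the quadratic case.

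First I would rely on the main technical output of the paper: if two postcritically finite polynomials in $\Sone$ satisfy the limb condition, then some ray equivalence class of the formal mating contains a closed loop, so the topological mating is not a sphere and $F_a \mate F_b$ is obstructed. The bulk of the paper is devoted to establishing exactly this implication by studying the rotation sets attached to the parameter limbs in the connectedness locus of $\Sone$, so this step can be cited directly once it is proved.

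Second I would search for specific pairs in $\Sone$ that separate the two conditions. In the quadratic case the two conditions coincide, so the separation must come from genuinely cubic phenomena arising from the presence of the fixed critical point alongside the free critical point. My strategy would be to fix a small periodic rotation set in $\Q/\Z$ whose period and rotation number are chosen so that, viewed as internal angle data on the two sides of the formal mating, the iterated images of the critical value rays link up into a cycle, and then to pick postcritically finite $F_a$ and $F_b$ whose critical value parameter angles realize complementary portions of that rotation set. Polynomials from low-period hyperbolic components of $\Sone$ are the natural candidates, since for these the parameter rays and Hubbard trees are explicitly computable; a finite search through such components, keyed by a small target period, should turn up a working pair.

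Finally I would verify the two required conclusions for the chosen pair: (i) the combinatorial limb condition holds, so that the sufficiency result applies and produces the obstruction, and (ii) $F_a$ and $F_b$ do not sit in a conjugate pair of limbs of $\Sone$. The main obstacle will be (ii). The limb structure of $\Sone$ is considerably richer than that of the Mandelbrot set, so formulating the correct notion of conjugate limbs in this setting and then certifying that a candidate pair fails it is the delicate point. Once the bookkeeping of parameter rays landing at root points of hyperbolic components is in place, exhibiting such a pair should reduce to a finite rotation-set calculation at a carefully chosen period, and presenting the computation explicitly (together with a picture of the relevant parameter rays) will complete the proof.
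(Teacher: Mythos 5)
Your overall route is the same as the paper's: establish that the limb condition (two limbs $\Limb_1,\Limb_2$ with $\Theta(\Limb_1)=-\Theta(\Limb_2)$) forces a ray class containing a closed loop, hence an obstruction, and then exhibit a pair satisfying the limb condition that is not a conjugate pair. The first ingredient is exactly Proposition~\ref{p:limbcondition} (via the $\alpha$-periodic cycles on $\partial U_a$ and $\partial U_b$, not the critical value rays as you suggest --- the critical value rays only enter in the strictly preperiodic case).

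Where your proposal is genuinely thinner than the paper is the step you defer to a ``finite search through low-period components.'' The paper does not search: it proves that \emph{every} limb whose internal argument has a rotation number admits a complementary limb (Proposition~\ref{p:complimbs}), i.e.\ a limb $\Limb'$ with $\Theta(\Limb')=-\Theta(\Limb)$ that is not the conjugate limb. The mechanism is a counting argument: by Goldberg's signature theorem there are exactly $q$ rotation sets under $m_3$ with rotation number $p/q$ that are unions of two cycles, each such set is realized as $\Theta(\Limb)$ by exactly two limbs of $\Sone$ (one of each sign, Corollary~\ref{cor:numofcomplimbs}), and only one of the two limbs realizing $-\Theta(\Limb)$ can be the conjugate limb. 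Lemma~\ref{l:limbdata} even identifies the complementary limb explicitly: if $\Limb$ has combinatorial data $(p/q,k,\pm)$, the complementary limb has data $(-p/q,k,\mp)$. So the existence you hope to certify by computation is automatic, and the delicate point is not item (ii) as you predict (non-conjugacy is immediate since the complementary limb has the opposite sign and is by definition not the conjugate), but rather the realization and uniqueness statements behind Corollary~\ref{cor:numofcomplimbs}. If you do want a concrete witness, the paper's worked example is $\Limb_{4/7}^+$ paired with $\Limb_{6/7}^-$, whose $\alpha$-cycle ray classes join into a single closed loop. As written, your proposal is a viable plan but does not yet establish that the search terminates successfully; you should either import the counting argument or carry out one explicit example in full.
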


Thus the necessity of the conjugate limb condition does not carry over from quadratic polynomials to the space $\Sone$. We provide a sufficient (combinatorial) condition for two postcritically finite polynomials in $\Sone$ to fail to be mateable (Proposition~\ref{p:limbcondition}). After this, we show that this condition can be satisfied even when the two polynomials do not belong to conjugate limbs of $\Sone$ (Theorem~\ref{MainThm}). We  conjecture that the given sufficient condition is also necessary, and give a proof of necessity in a special case (Theorem~\ref{t:preperneccessity}). The paper also contains a discussion of the combinatorics of limbs in the space $\Sone$, which is used to provide the combinatorial description mentioned above. Examples are given to illuminate the main results.

\subsection*{Acknowledgments} The author wishes to thank the anonymous referee, whose suggestions helped improve the clarity and exposition of the paper. Many of the figures of Julia sets of rational maps in this paper were created using \emph{Dynamics Explorer} \cite{DynEx}.

\section{Preliminaries}

We presume some familiarity with the dynamics of rational functions on the Riemann sphere: for background, the reader is referred to \cite{MilnorComplex}. 

\subsection{Rotation sets}

In this section we discuss the results we need on rotation sets. Much of the background material is taken from \cite{SaeedBook} (see also \cite{RotSubsets}). Let $\T = \R / \Z$ be the circle. Any continuous $g \colon \T \to \T$ lifts, via the canonical projection, to a continuous $G \colon \R \to \R$ which satisfies $G(x+1) = G(x) + d$ for some $d \in \Z$ called the degree of $g$. Furthermore, we say $g$ is \emph{monotone} if $G$ is monotone.

Given a continuous non-decreasing map $G \colon \R \to \R$ such that $G(x+1) = G(x) + 1$ for all $x \in \R$, the limit
\[
 \tau(G) = \lim_{n \to \infty} \frac{G^{\circ n}(x) - x}{n}
\]
is independent of $x$, and is called the \emph{translation number} of $G$. The \emph{rotation number} $\rho(g)$ of a degree $1$ monotone map $g \colon \T \to \T$ is then defined as the residue class of $\tau(G)$ modulo $\Z$, where $G$ is any lift of $g$.

For $d \geq 2$, we define $m_d \colon \T \to \T$ by
\[
 m_d(t) = dt \pmod \Z.
\]
This paper will focus on $m_2$ and $m_3$, since both play an important role in the dynamics of maps in $\Sone$. A non-empty compact set $X \subseteq \T$ is called a \emph{rotation set} if $m_d(X) = X$ and the restriction $m_d|_X$ can be extended to a degree $1$ monotone map on $\T$. Since any two such extensions $g$ and $h$ agree on $X$, it follows from Theorem 1.8 of \cite{SaeedBook} that $\rho(g) = \rho(h)$. We denote this common rotation number by $\rho(X)$ (or just $\rho$ if the set $X$ is clear in the context) and call it the \emph{rotation number of} $X$.

Given a rotation set $X$ for $m_d$, we call a component of $\T \setminus X$ a \emph{gap} of $X$. A gap is called a \emph{major} gap if its length $\ell$ satisfies $\ell \geq 1/d$. The \emph{multiplicity} of a major gap is the integer part of $d \ell$. If a gap is not major, then it is called a \emph{minor} gap. A minor gap maps homeomorphically onto its image (which must also be a gap of $X$) under $m_d$. We collect some important consequences of the above.

\begin{prop}[Theorem 2.9, \cite{SaeedBook}]\label{prop:gapthm}
 A rotation set is uniquely determined by its major gaps.
\end{prop}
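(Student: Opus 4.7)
My plan is to reconstruct the rotation set $X$ explicitly from its major gaps via the formula
\[
X = \bigl\{\, x \in \T : m_d^{n}(x) \notin \bigcup_i \mathrm{Int}(M_i) \text{ for all } n \geq 0 \,\bigr\},
\]
where $M_1, \ldots, M_k$ are the major gaps of $X$. The inclusion from left to right is immediate: $X$ is invariant under $m_d$ and disjoint from each $\mathrm{Int}(M_i)$. All the work goes into the reverse inclusion, which says that every point of $\T \setminus X$ eventually lands in the interior of a major gap under forward iteration.

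The key intermediate step is to show that if $G$ is a minor gap of $X$, then $m_d(G)$ is again a gap of $X$, with $|m_d(G)| = d\,|G|$. The length formula is immediate since $m_d$ is injective on $G$. To see that $m_d(G)$ is a gap -- not merely an arc whose endpoints lie in $X$ -- I would argue by contradiction: if some interior point $y$ of $m_d(G)$ lay in $X$, then $m_d$-invariance of $X$ produces a preimage $z \in X$ with $m_d(z) = y$, and since $G \cap X = \emptyset$ this $z$ must lie in the long complementary arc to $G$. A cyclic-order check on $a, b, z$ and its image under $m_d$ (where $G = (a,b)$) then shows that the monotone degree-one extension $\tilde m_d$ of $m_d|_X$ would reverse their orientation, contradicting monotonicity.

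Given this lemma, iterating produces $|m_d^n(G)| = d^n |G|$ so long as the intermediate iterates are all minor gaps. Since lengths on $\T$ are bounded by $1$, there is a least $n$ for which $m_d^n(G)$ has length at least $1/d$, and by the lemma this iterate is itself a gap of $X$, hence one of the $M_i$. Now if $x$ belongs to the candidate set above but not to $X$, it lies in some gap, which cannot be major, hence is minor, and the previous sentence forces $m_d^n(x) \in \mathrm{Int}(M_i)$ for some $n$, contradicting membership in the candidate set.

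I expect the only genuine difficulty to be the cyclic-order/monotonicity argument showing that $m_d(G)$ is a gap; everything else -- the geometric progression of lengths and the final deduction -- is routine bookkeeping. Even that step, however, is really just a careful reading of what it means for $m_d|_X$ to extend to a monotone degree-one map on $\T$, so the proof should be quite short in total.
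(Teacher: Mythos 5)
Your proposal is correct, but note that the paper offers no proof of this statement to compare against: it is imported verbatim as Theorem~2.9 of Zakeri's monograph \cite{SaeedBook}, so your argument has to stand on its own, and it does. The reconstruction formula $X = \{\, x \in \T : m_d^{n}(x) \notin \bigcup_i M_i \ \text{for all}\ n \geq 0 \,\}$ depends only on the major gaps, so it immediately yields uniqueness once both inclusions are checked. The forward inclusion uses only $m_d(X)=X$ and the disjointness of gaps from $X$ (gaps are open, so $\mathrm{Int}(M_i)=M_i$). Your key lemma for the reverse inclusion --- a minor gap maps homeomorphically onto a gap of $d$ times its length --- is precisely the fact the paper itself records without proof in its preliminaries on rotation sets, and your sketch of it is sound: if $G=(a,b)$ is minor then $m_d(G)$ is the positively oriented arc $(m_d(a),m_d(b))$ of length $d|G|<1$, and if some $y$ in its interior lay in $X$, then surjectivity of $m_d|_X$ onto $X$ produces $z\in X\cap(b,a)$ with $m_d(z)=y$; since $a$, $b$, $z$ all lie in $X$, any degree-one monotone extension would have to send the positively ordered triple $(a,b,z)$ to a weakly positively ordered triple, whereas $(m_d(a),m_d(b),y)$ consists of three distinct points in the reverse cyclic order. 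The termination step is also fine: $|m_d^{\,n}(G)|=d^{\,n}|G|$ forces a first $n$ with $d^{\,n}|G|\geq 1/d$, the $(n-1)$-st iterate is still minor, so the $n$-th iterate is a genuine gap and hence major. The only degenerate worry --- that $X$ might have no major gaps, making the formula vacuously return $\T$ --- is excluded by Proposition~\ref{prop:rotsets}, which guarantees a complementary interval of length $1/d$.
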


\begin{prop}[Corollary 2.16, \cite{SaeedBook}]\label{prop:rotsets}
 Let $X \subset \T$ be a non-empty $m_d$-invariant set. Then $X$ is a rotation set if and only if $\T \setminus X$ contains $d-1$ disjoint intervals, each of length $1/d$.
\end{prop}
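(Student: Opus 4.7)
The plan is to prove the equivalence by establishing each implication separately.

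\emph{Forward direction ($\Rightarrow$).} Assume $X$ is a rotation set, so $m_d|_X$ extends to a monotone degree-$1$ map $g \colon \T \to \T$. The key observation is that $g$ and $m_d$ agree on $X$ but have different degrees (namely $1$ versus $d$), and this discrepancy of $d-1$ must be accounted for by the behavior on gaps. For each gap $I$ of $X$, the restriction $m_d|_I$ wraps around $g(I)$ some nonnegative integer number of times --- its multiplicity, equal to $\lfloor d|I| \rfloor$ for a major gap and $0$ for a minor gap. Lifting both maps to $\R$ and comparing the per-period increments ($+d$ for $\tilde m_d$ versus $+1$ for $\tilde g$, together with their agreement on a lift of $X$) shows that the total multiplicity across all gaps equals exactly $d-1$. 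A major gap of multiplicity $k$ has length at least $k/d$, hence contains $k$ disjoint subintervals of length $1/d$; collecting these over all major gaps yields $d-1$ disjoint intervals of length $1/d$ in $\T \setminus X$.

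\emph{Reverse direction ($\Leftarrow$).} Given $d-1$ disjoint intervals $J_1, \ldots, J_{d-1}$ of length $1/d$ in $\T \setminus X$, with endpoints $J_i = [a_i, b_i]$, I would construct the extension directly. Since $b_i - a_i = 1/d$, we have $m_d(a_i) = m_d(b_i)$ in $\T$. Define $g \colon \T \to \T$ to be the constant $m_d(a_i)$ on each $J_i$ and to equal $m_d$ on the complement $\T \setminus \bigcup_i J_i$. Continuity holds because the constant value matches $m_d$ at the endpoints of each $J_i$; monotonicity holds because $g$ is constant on each $J_i$ and equal to the monotone $m_d$ on each of the complementary arcs. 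Finally, the complementary arcs have total length $1/d$, and $m_d$ stretches lengths by a factor of $d$, so the image of $g$ wraps once around $\T$, giving $g$ degree $1$. Since $\bigcup_i J_i \subset \T \setminus X$, we have $g|_X = m_d|_X$, and so $X$ is a rotation set.

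\emph{Main obstacle.} The substantive content is in the forward direction, specifically the bookkeeping that forces the total multiplicity of major gaps to equal exactly $d-1$. This requires a careful comparison of the lifts $\tilde g$ and $\tilde m_d$ on a fundamental domain, exploiting their agreement on $X$ --- intuitively, the ``extra winding'' contributed by $m_d$ beyond $g$ has nowhere to go except into the gaps. The reverse direction, by contrast, is a concrete construction of the extension and should be routine once the correct formula is written down.
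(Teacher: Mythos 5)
This proposition is quoted in the paper from Zakeri's monograph (Corollary 2.16 of \cite{SaeedBook}) and is not proved there, so there is no in-paper argument to compare against; your two-directional proof is essentially the standard one from that reference and is correct in outline. The forward direction does rest on the fact that the multiplicities of the major gaps of a rotation set sum to exactly $d-1$, whose clean justification uses that rotation sets have Lebesgue measure zero (so the gap lengths sum to $1$, and summing $d\ell = \lfloor d\ell\rfloor + \{d\ell\}$ over all gaps gives $d = (d-1) + 1$); and note that fitting $k$ disjoint length-$1/d$ intervals into a gap whose length is exactly $k/d$ requires taking them half-open. The reverse construction (collapsing each $J_i$ to a point and using that the complementary arcs have total length $1/d$) is exactly right.
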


We will refer to a periodic orbit under $m_d$ as a \emph{cycle}. When $d=3$, a rotation set can contain at most two cycles $C$ and $C'$, each with the same rotation number $\rho$ \cite{SaeedBook}. Furthermore, these cycles are \emph{superlinked} in the sense that each gap of $C$ meets $C'$, and each gap of $C'$ meets $C$. If $X$ is rotation set under $m_3$ with $\rho(X) \in \Q$ made up of the union of two cycles, we will define the \emph{signature} $s(X)$ to be vector $(s_1,s_2)$, where $s_1$ is the number of elements of $X$ in $[0,1/2)$ and $s_2$ is the number of elements of $X$. The following, a special case of a theorem of Goldberg, gives a characterisation of which signatures can occur when $X$ is the union of two cycles under $m_3$.

\begin{prop}[\cite{Goldberg}]\label{p:signaturetheorem}
Suppose $0 \leq p/q < 1$ is a reduced fraction and $0 \leq s_1 \leq s_2 = 2q$. Then there exists a rotation set under $m_3$ with rotation number $p/q$ and signature $s(X) = (s_1,s_2)$ made up of the union of two cycles if and only if $s_1$ is odd. Furthermore, if this rotation set exists, then it is unique. 
\end{prop}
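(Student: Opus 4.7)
The plan is to apply Proposition~\ref{prop:gapthm} and Proposition~\ref{prop:rotsets} to reduce the problem to a combinatorial analysis of the two major gaps of $X$, and then to isolate the parity constraint on $s_1$.

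First, I would constrain the multiplicities. If $X$ is an $m_3$-rotation set that decomposes as the union of two cycles $C$ and $C'$, then both of its major gaps have multiplicity exactly one. A major gap of multiplicity two has length at least $2/3$ and already supplies all of the deficit required by Proposition~\ref{prop:rotsets}; tracking the $m_3$-action on its endpoints (and iterating) would then force every point of $X$ onto a single cycle, contradicting the two-cycle decomposition. Hence both major gaps $U_1, U_2$ have length in $[1/3, 2/3)$, and by the superlinked property their four endpoints alternate between $C$ and $C'$ around $\T$.

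Next, I would pin down the locations of $U_1$ and $U_2$ using the fixed points $0$ and $1/2$ of $m_3$. Assuming $0, 1/2 \notin X$ (the generic case, to be handled separately), neither fixed point can lie in a minor gap $G$: otherwise $m_3|_G$ would be a homeomorphism onto $m_3(G)$, and the fixed point would lie in $m_3(G)$ as well, so $m_3(G) = G$; but the expansion factor of $3$ then forces $|G| = 3|G|$, which is impossible. Consequently each of $0$ and $1/2$ must lie in one of the two major gaps, sharply restricting the admissible configurations of $(U_1, U_2)$. Combined with the monotone semi-conjugacy of $m_3|_X$ to the rigid rotation by $p/q$ on $2q$ cyclic positions, this lets me describe $(U_1, U_2)$ as a choice of two arcs among the $2q$ between consecutive $X$-points. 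By Proposition~\ref{prop:gapthm}, the set $X$ is then uniquely determined by this combinatorial data.

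The final step is to extract the signature. Reading $s_1$ off from the positions of $U_1, U_2$ relative to the cut at $1/2$, and using the alternation of $C$ and $C'$ together with the fixed-point constraints above, one obtains the parity $s_1 \equiv 1 \pmod 2$ by a direct count. Existence for each odd $s_1 \in [1, 2q-1]$ is then proved by explicit construction of the admissible pair $(U_1, U_2)$, and uniqueness follows again from Proposition~\ref{prop:gapthm}. The main obstacle will be this parity computation: one must carefully bookkeep how the $2q$ cyclic positions of $X$ split between $[0, 1/2)$ and $[1/2, 1)$ once the two fixed-point-containing major gaps are fixed, and verify a matching existence statement for every odd value. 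This is the heart of Goldberg's combinatorial argument, and encodes a congruence between $p$, $q$, and the positions of the major gaps relative to the fixed points of $m_3$.
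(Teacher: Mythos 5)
The paper does not actually prove this proposition: it is quoted as a special case of Goldberg's theorem, with \cite{Goldberg} as the reference, so there is no internal argument to compare yours against. Judged on its own terms, your outline has the right skeleton --- it is essentially the Goldberg--Zakeri strategy of reducing everything to the placement of the two major gaps and then invoking Proposition~\ref{prop:gapthm} for uniqueness --- and the intermediate steps you do argue are sound: a minor gap cannot contain a fixed point of $m_3$ (your expansion argument $|G|=3|G|$ is correct), adjacent points of $X$ alternate between $C$ and $C'$ by superlinking, and uniqueness does follow from Proposition~\ref{prop:gapthm} once the major gaps are pinned down.

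As a proof, however, there is a genuine gap, and you name it yourself. The entire content of the proposition is the equivalence ``such a rotation set exists if and only if $s_1$ is odd,'' and your proposal asserts both directions (``one obtains the parity $s_1 \equiv 1 \pmod 2$ by a direct count''; ``existence \dots is then proved by explicit construction'') without carrying out the count or exhibiting the construction; until that bookkeeping is done, nothing has been established. Two smaller holes sit in the reduction itself: first, the claim that a multiplicity-two major gap forces $X$ to be a single cycle is only gestured at (``tracking the $m_3$-action on its endpoints \dots would force every point of $X$ onto a single cycle'' is not an argument); second, you silently assume that $0$ and $1/2$ fall in \emph{different} major gaps, whereas a multiplicity-one major gap may have length up to $2/3 > 1/2$ and so could a priori contain both antipodal fixed points --- excluding this configuration is part of the same combinatorial analysis you defer. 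So what you have is a plausible plan rather than a proof; completing it would amount to reproducing the core of Goldberg's argument rather than citing its conclusion, which is all the paper itself does.
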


We remark that for $\rho = 0$, the unique $m_3$-rotation set made up of the union of two cycles is $X = \{ 0, 1/2 \}$, since $0$ and $1/2$ are the two fixed points of $m_3$. 


\subsection{Cubic Polynomials with a fixed critical point}

 We now discuss some preliminaries on the dynamics of cubic polynomials. Much of the material we need is from \cite{CP1}. Many properties of the dynamical and parameter spaces of polynomials in $\Sone$ were also investigated in \cite{Roesch} and \cite{FaughtThesis}. Denote by $\Sone$ the space of monic, centred cubic polynomials with a marked fixed critical point. Under a holomorphic change of coordinates, any such polynomial is of the form
\[
 F_a(z) = z^3 - 3a^2z+2a^3 + a.
\]
Under this parameterisation $a$ is the (fixed) \emph{marked critical point} and $-a$ is the \emph{free critical point} of $F_a$. We will call $v_a = F_a(-a)$ the \emph{free critical value} of $F_a$. The normal form above provides a conformal isomorphism $\Sone \cong \C$ via $F_a \mapsto a$. The \emph{connectedness locus} $\mathcal{C}(\Sone)$ is the set of polynomials $F_a$ for which the Julia set is connected; equivalently, it is the set of polynomials $F_a$ for which the free critical point $-a$ has bounded orbit. For any map $F_a \in \Sone$, we denote by $U_a$ the Fatou component containing the fixed critical point $a$ (thus $U_a$ is the immediate basin of $a$). The \emph{main hyperbolic component} $\Hy_0$ is the hyperbolic component in $\Sone$ which contains $F_0$, the function $z \mapsto z^3$ which has a double critical fixed point.

\subsubsection{Structure of filled Julia sets} For all $F_a \in \Hy_0$, the Julia set of $F_a$ is the simple closed curve $\partial U_a$ and $-a \in U_a$. Now suppose $F = F_a \in \mathcal{C}(\Sone) \setminus \Hy_0$. Then, by B\"ottcher's theorem, there exists a conformal isomorphism $ \Bot = \Bot_a \colon U_a \to \mathbb{D}$ which satisfies
\[   
 \Bot(F(z)) = (\Bot(z))^2.
\]
By a theorem of Roesch \cite{RoeschPuzzles} (see also \cite{FaughtThesis,Roesch}), $\partial U_a$ is a simple closed curve. Hence by Carath\'eodory's Theorem, there exists a homeomorphic extension of $\Bot$ to the boundary $\partial U_a$. By abuse of notation, we will also use $\Bot$ for this homeomorphism from $\overline{U}_a$ to $\overline{\mathbb{D}}$. Thus for each point $z \in \partial U_a$, there exists a unique $t \in \T$ such that $z = \Bot(e^{2 \pi i t})$. For brevity we will write $\beta(t)$ instead of $\Bot(e^{2 \pi i t})$ and call $t$ the internal angle of $z$. The following theorem then describes the structure of the (filled) Julia set of a map in $\mathcal{C}(\Sone) \setminus \Hy_0$.

\begin{thm}[Theorem 2.1, \cite{CP1}]\label{t:dynlimbs}
 If $F_a \in \mathcal{C}(\Sone)$ with $-a \notin U_a$, then the filled Julia set $K(F_a)$ is equal to the union of the topological disk $\overline{U}_a$ with a collection of compact connected sets $K_t$, where $t$ ranges over a countable subset $\Lambda \subseteq \T$. Furthermore
 \begin{enumerate}
  \item The $K_t$ are pairwise disjoint, and each $K_t$ intersects $\overline{U}_a$ in the single boundary point $\beta(t)$.
  \item There is a distinguished element $t_0 \in \Lambda$ such that the free critical point $-a$ belongs to $K_{t_0}$.
  \item If $t \in \T$, then $t \in \Lambda$ if and only if there exists some $n \geq 0$ such that $2^nt \equiv t_0 \pmod \Z$.
  \item For $t \not\equiv t_0 \pmod \Z$, the map $F_a$ carries $K_t$ homeomorphically onto $K_{2t}$. However, $F_a$ maps $K_{t_0}$ onto the entire filled Julia set $K(F_a)$.
 \end{enumerate}
\end{thm}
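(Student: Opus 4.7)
The plan is to build the decomposition in three stages: first establish the topology of $\overline{U}_a$, then extract the limbs as closures of the connected components of $K(F_a) \setminus \overline{U}_a$, and finally read off the dynamics from the doubling action on $\partial U_a$. Since $-a \notin U_a$, the fixed point $a$ is the only critical point of $F_a$ inside $U_a$, so by Riemann--Hurwitz $U_a$ is simply connected and $F_a|_{U_a}$ has degree $2$; the Böttcher coordinate $\Bot$ conjugates this action to $z \mapsto z^2$ on $\D$. A standard argument shows $\partial U_a$ is locally connected (the postcritical set meets $\overline{U}_a$ only at $a$, and one applies either an expansion/hyperbolicity argument or puzzle techniques), so $\Bot$ extends to a homeomorphism $\overline{U}_a \to \overline{\D}$, defining the angle coordinate $\beta(t)$ and realising $\overline{U}_a$ as a topological disk.

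Next, let $\mathcal{V}$ be the collection of connected components of $K(F_a) \setminus \overline{U}_a$, and for each $V \in \mathcal{V}$ set $K_V = \overline{V}$, a compact connected subset of $K(F_a)$. Connectedness of $K(F_a)$ forces $K_V \cap \partial U_a \neq \emptyset$. The crucial claim is that this intersection is a single point. If it contained two distinct points $\beta(t_1), \beta(t_2)$, connecting them inside $K_V$ by an arc (using local connectedness of $K(F_a)$) and along $\partial U_a$ by an arc would produce a Jordan curve $\Gamma \subset K(F_a)$ bounding a topological disk $D$. Since the basin of infinity is connected and unbounded, $D$ lies inside $K(F_a)$; but $D$ cannot meet $U_a$ (an arc of $\Gamma$ runs along $\partial U_a$, with $U_a$ on the wrong side), so $D$ must consist of Julia-set points and bounded Fatou components distinct from $U_a$. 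Every bounded Fatou component eventually maps to $U_a$, and pushing $D$ forward by an appropriate iterate of $F_a$ would produce a loop in $\partial U_a$ enclosing a region of $K(F_a)$, contradicting the Jordan-curve structure of $\partial U_a$. Hence $K_V \cap \partial U_a = \{\beta(t)\}$ for a unique $t$; setting $K_t = K_V$ and letting $\Lambda$ be the set of such $t$ gives (i), with countability automatic because the $K_V$ are pairwise disjoint subsets of $\C$ of positive diameter.

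The dynamical claims now follow relatively cleanly. The conjugation $\Bot(F_a(z)) = \Bot(z)^2$ on $\overline{U}_a$ gives $F_a(\beta(t)) = \beta(2t)$, so $F_a(K_t)$ is a compact connected subset of $K(F_a)$ meeting $\overline{U}_a$ only at $\beta(2t)$, hence contained in $K_{2t} \cup \{\beta(2t)\}$. For $t \neq t_0$ the restriction $F_a|_{K_t}$ carries no critical point and is therefore a local homeomorphism onto its image; combining with a degree count (the non-critical limb $K_{2t}$ has exactly the two preimage limbs $K_t$ and $K_{t + 1/2}$), one concludes $F_a|_{K_t}$ is a homeomorphism onto $K_{2t}$. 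The critical limb $K_{t_0}$ absorbs the remaining degree, forcing $F_a(K_{t_0}) = K(F_a)$. For (iii), if $t \in \Lambda$ and $2^n t \neq t_0$ for every $n$, then $F_a^n|_{K_t}$ is a homeomorphism onto $K_{2^n t}$, producing infinitely many pairwise disjoint non-degenerate compact limbs; a standard complex-dynamics argument (via normal families or expansion of $F_a$ on $J(F_a)$) shows this is impossible, so some $2^n t$ must equal $t_0$. Conversely, surjectivity of $F_a$ on $K_{t_0}$ together with the homeomorphism property on other limbs shows every iterated doubling-preimage of $t_0$ lies in $\Lambda$.

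The main obstacle is the single-point attachment claim in the second paragraph: ruling out the possibility that a component of $K(F_a) \setminus \overline{U}_a$ pinches to $\partial U_a$ at two or more boundary points. Once this is established, the dynamical statements follow essentially mechanically from the Böttcher conjugation and a degree count for $F_a$ on $K(F_a)$.
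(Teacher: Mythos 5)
This theorem is not proved in the paper at all: it is imported verbatim as Theorem~2.1 of \cite{CP1} (Milnor's work on cubic polynomials with a periodic critical orbit) and used as background, so there is no internal proof to compare yours against. Judged on its own, your outline takes the natural route (B\"ottcher disk, limbs as components of $K(F_a)\setminus\overline{U}_a$, single-point attachment, degree count), but two of its steps have genuine gaps. First, you invoke local connectivity where it is not available. The theorem is asserted for every $F_a\in\mathcal{C}(\Sone)$ with $-a\notin U_a$, not only for postcritically finite or hyperbolic maps, so $K(F_a)$ need not be locally connected and you cannot ``connect two points inside $K_V$ by an arc.'' The local connectivity of $\partial U_a$ (equivalently, that it is a Jordan curve) is itself the hardest part of the theorem, and your parenthetical justification rests on a claim that can fail: the postcritical set may well meet $\overline{U}_a$ at points other than $a$, since the forward orbit of $-a$ can land on or accumulate at $\partial U_a$. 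The single-point attachment claim should instead be run with plane topology alone: if $\overline{V}$ met $\partial U_a$ in two points, a Janiszewski-type separation argument shows $\overline{U}_a\cup\overline{V}$ disconnects the sphere, the complementary ``pocket'' not containing $\infty$ lies in $K(F_a)$ because the basin of infinity is connected, and a point of $\partial U_a$ on the boundary of that pocket would then have a neighbourhood contained in $K(F_a)$, contradicting $\partial U_a\subseteq J(F_a)=\partial K(F_a)$. No arcs inside $K(F_a)$ and no pushing forward of disks is needed.

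Second, your argument for (iii), and with it the countability of $\Lambda$, does not work as stated. ``Infinitely many pairwise disjoint non-degenerate compact limbs'' is not a contradiction --- $\Lambda$ is infinite and all the limbs $K_t$ are non-degenerate --- and pairwise disjoint continua of positive diameter can be uncountable (foliate a square by parallel segments), so countability is not automatic either. The correct argument is that if $2^nt\not\equiv t_0$ for all $n\geq 0$, then every iterate $F_a^{\circ n}$ admits a univalent inverse branch on a neighbourhood of $K_{2^nt}$ carrying it back to $K_t$, and a shrinking-lemma or normal-families argument forces $\mathrm{diam}(K_t)=0$; that is, such a $K_t$ degenerates to the single point $\beta(t)$ and is not a limb at all. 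Countability of $\Lambda$ then follows because $\Lambda$ is exactly the backward orbit of $t_0$ under angle doubling. The remaining dynamical claims in your third paragraph (the degree count giving a homeomorphism on non-critical limbs and surjectivity on $K_{t_0}$) are essentially right.
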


We call $K_t$ a \emph{(dynamical) limb} for the map $F$; the point $\beta(t)$ where $K_t$ meets $U_a$ is called the \emph{root point} of the limb $K_t$. The set $K_{t_0}$ is called the \emph{critical limb} of $F_a$. 

Suppose a polynomial $F$ has a connected filled Julia set $K(F)$. By B\"{o}ttcher's theorem, there exists a conformal isomorphism  $\Bot_F \colon \CC \setminus \overline{\D} \to \CC \setminus K(F)$ which conjugates the map $z \mapsto z^d$ on $\CC \setminus \overline{\D}$ to the map $F$ on $\CC \setminus K(F)$. Furthermore, there is a unique choice of $\Bot_F$ which is tangent to the identity at infinity.

\begin{defn}\label{d:extray}
 The \emph{external ray} of angle $t$ for the polynomial $F$ is 
\[
  R_{F}(t) = \Bot_F (r_t)
\]
where $r_t = \{ r \exp(2 \pi i t) : r > 1 \} \subset \C \setminus \D$ is the corresponding radial line.
\end{defn}

We will often use the notation $R_a(t)$ instead of $R_{F_a}(t)$ for a polynomial $F_a \in \Sone$. If the choice of polynomial is clear, we sometimes drop the subscript altogether. By Carath\'{e}odory's theorem, the filled Julia set $K(F)$ (or equivalently, the Julia set $J(F)$) of $F$ is locally connected if and only if $\Bot_F$ extends continuously to the unit circle. The induced semiconjugacy $\gamma_F \colon \R / \Z \to J(F)$ is called the \emph{Carath\'{e}odory loop}. The point $\gamma_F(t) \in J(F)$ is called the \emph{landing point} of the ray $R_F(t)$ and a point $z$ of the Julia set is called \emph{biaccessible} if it is the landing point of (at least) two external rays. That is, there exists $s \neq t$ such that $z = \gamma_F(s) = \gamma_F(t)$.

\subsubsection{Parameter Limbs}\label{ss:paramlimbs}

We will also need the notion of limbs in the parameter space. For a map $F_a \in \Hy_0$, then similarly to the above, there exists a unique conformal isomorphism $\Bot$ on a neighbourhood of $a$ conjugating $F_a$ to the map $w \mapsto w^2$. By Lemma 3.6 in \cite{CP1}, $\Bot$ can be analytically continued to a well-defined holomorphic function on a neighbourhood of the free critical point $-a$. We may then define a  holomorphic function $\phi \colon \Hy_0 \to \mathbb{D}$ by $\phi(a) = \Bot_a(-a)$. This map $\phi$ then extends to a continuous map $\phi \colon \mathcal{C}(\Sone) \to \overline{\mathbb{D}}$ such that if $F \in \mathcal{C}(\Sone) \setminus \Hy_0$ then $\phi(F) = e^{2 \pi i t_0}$, where $t_0 \in [0,1)$ is the internal argument of the critical limb of $F$.

Let $t_0 \in (0,1)$. We define the \emph{(parameter) limb} $\Limb_{t_0}^{+}$ to be the set
\[
 \Limb_{t_0}^{+} = \{ F_a \in \mathcal{C}(\Sone) \mid \phi(F) = e^{2 \pi i t_0} \text{ and } \Re(a) > 0 \}
\]
and analogously
\[
 \Limb_{t_0}^{-} = \{ F_a \in \mathcal{C}(\Sone) \mid \phi(F) = e^{2 \pi i t_0} \text{ and } \Re(a) < 0 \}.
\] 
We will show that one can distinguish $\Limb_{t_0}^+$ from $\Limb_{t_0}^-$ by dynamical means in Lemma~\ref{l:fixedraylanding}. For $t_0=0$ we define $\Limb_0^+ = \{  F_a \in \mathcal{C}(\Sone) \mid \phi(F) = e^{2 \pi i 0} \text{ and } \Im(a) < 0 \}$ and $\Limb_0^- = \{  F_a \in \mathcal{C}(\Sone) \mid \phi(F) = e^{2 \pi i 0} \text{ and } \Im(a) > 0 \}$. Note that by the $180^{\circ}$ rotational symmetry of $\Sone$, we have $F_a \in \Limb_{t_0}^+$ if and only if $F_{-a} \in \Limb_{t_0}^-$. The root point of a limb $\Limb_{t_0}^\pm$ is defined to be the unique point of $\Limb_{t_0}^\pm \cap \partial \Hy_0$. 

In much of this paper, we will have to deal with the special case $t_0 = 0$ separately from other periodic internal arguments. Fortunately, most of the time the case for $t_0=0$ can be proved by inspection. When such a consideration is made, it will be clearly indicated.

We will also need the notion of parameter rays in the complement of the connectedness locus. They are closely related to the concept of external rays, as the following definition shows.

\begin{defn}
We say a map $F_a \in \Sone$ belongs to the \emph{parameter ray} $\pray(\theta)$ if the external ray $R_a(\theta)$ passes through the cocritical point $2a$ in the dynamical plane of $F_a$.
\end{defn}

If $\theta$ is $m_3$-periodic, then the rays $\pray(\theta-1/3)$ and $\pray(\theta+1/3)$ land on a parabolic map. In this case the angles $\theta-1/3$ and $\theta+1/3$ are called \emph{co-periodic}. The following properties of parameter limbs come from \cite{CP1} (Compare also \cite{CP3} for a more in-depth discussion). See Figure~\ref{f:S1limbs} for some examples of parameter limbs.

\begin{figure}
\centering
\includegraphics[width=0.5\linewidth]{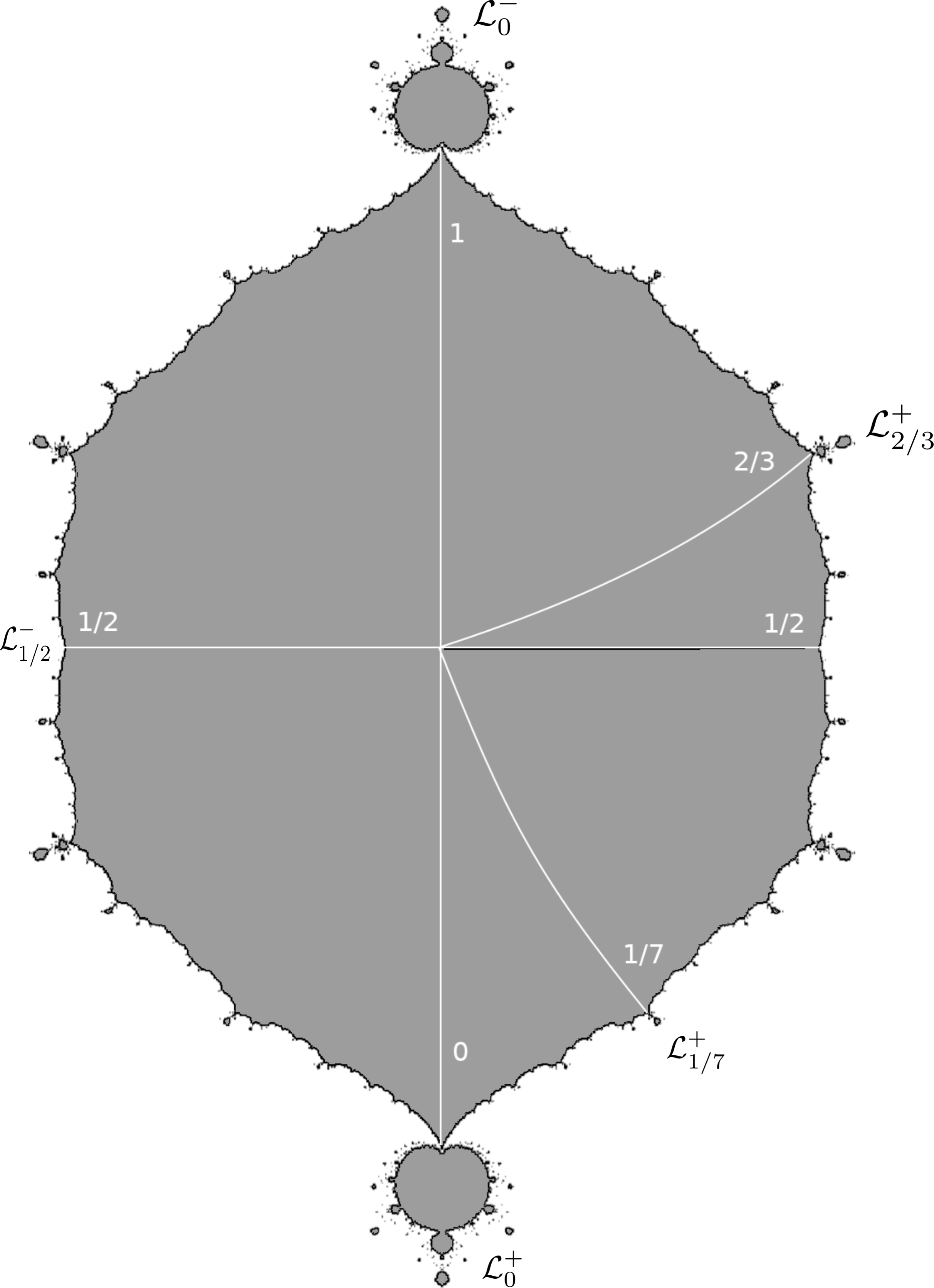}
\caption{The connectedness locus of $\Sone$. Internal rays of the main hyperbolic component $\Hy_0$ are shown, as well as some associated limbs.}
\label{f:S1limbs}
\end{figure}

\begin{prop}\label{prop:limblanding}
 Suppose $t_0$ is periodic of period $q$ under $m_2$. Then there are two external parameter rays $\pray(\theta)$ and $\pray(\theta')$, where $\theta =\frac{i}{3(3^q-1)}$ and $\theta' = \frac{i+1}{3(3^q-1)}$, which land at the root point of $\Limb = \Limb_{t_0}^{\pm}$. Furthermore, for any map $F_a \in \Limb$, the internal ray of angle $t_0 + \frac{1}{2}$ in $U_a$, as well as the external rays $R_a(\theta)$ and $R_a(\theta')$ land at a common pre-periodic point in $J(F_a)$.
\end{prop}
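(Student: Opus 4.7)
The plan is to establish the desired ray landings for any $F_a$ in the interior of $\Limb$, deduce the parameter ray landing at the root by the parameter-dynamical correspondence, and then extend the dynamical conclusion to the root by continuity. Suppose $F_a$ is in the interior of $\Limb_{t_0}^{\pm}$, so $\phi(F_a) = e^{2\pi i t_0}$ and $F_a \in \mathcal{C}(\Sone) \setminus \Hy_0$. By Theorem~\ref{t:dynlimbs}, $-a$ lies in the critical limb $K_{t_0}$ attached to $\partial U_a$ at $\beta(t_0)$, so $\beta(t_0)$ is not a critical point. Since $t_0$ has $m_2$-period $q$, the point $\beta(t_0)$ is a repelling periodic point of $F_a$ of period $q$. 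By the Douady--Hubbard landing theorem, a pair $\alpha_1, \alpha_2$ of external rays with $m_3$-periodic angles of denominator $3^q - 1$ land at $\beta(2t_0)$ (and similarly at each point on the cycle of $\beta(t_0)$). The pre-periodic point $\beta(t_0 + 1/2) \in \partial U_a$ maps to $\beta(2t_0)$ under $F_a$, and since $\beta(2t_0)$ is not a critical value of $F_a$ it has three simple pre-images; a local-degree count gives exactly two external rays landing at $\beta(t_0 + 1/2)$, obtained as the appropriate $m_3$-preimages of $\alpha_1$ and $\alpha_2$. A combinatorial analysis then identifies these two angles as $\theta = i/(3(3^q - 1))$ and $\theta' = (i+1)/(3(3^q - 1))$ for an integer $i$ determined by the combinatorial position of $t_0$ relative to $\Hy_0$.

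Thus for any $F_a$ in the interior of $\Limb$, the internal ray of angle $t_0 + 1/2$ and the external rays $R_a(\theta), R_a(\theta')$ all land at $\beta(t_0 + 1/2)$, a pre-periodic point of $F_a$; this gives the dynamical statement on the open part of $\Limb$. The parameter ray statement follows: by definition $F_a \in \pray(\theta)$ iff $R_a(\theta)$ passes through the cocritical point $2a$, and at the root $F_{a^*}$ a direct computation using the normal-form identity $F_a(2a) = F_a(-a) = v_a$ and the $z \mapsto z^2$ B\"{o}ttcher conjugacy on $\overline{U}_{a^*}$ gives $2a^* = \beta(t_0 + 1/2)$, which is the common landing point of $R_{a^*}(\theta)$ and $R_{a^*}(\theta')$; hence the parameter rays $\pray(\theta), \pray(\theta')$ accumulate on $F_{a^*}$, and a standard landing theorem for rational-angle parameter rays upgrades accumulation to landing. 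The dynamical co-landing extends to the root itself by continuity of landing points of rational external and internal rays under the holomorphic family $(F_a)_{a \in \Limb}$.

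The main obstacle will be the combinatorial step identifying the integer $i$: showing that among the six $m_3$-preimages of the pair $\{\alpha_1, \alpha_2\}$, exactly the consecutive pair $\{i/(3(3^q - 1)), (i+1)/(3(3^q - 1))\}$ lands at $\beta(t_0 + 1/2)$, rather than at $\beta(t_0)$ or at the third pre-image of $\beta(2t_0)$ sitting inside the critical limb $K_{t_0}$. This requires a careful description of the sectors of $\C \setminus F_a^{-1}(R_a(\alpha_1) \cup R_a(\alpha_2))$, read off from the combinatorial limb structure of $\Sone$ developed in \cite{CP1}. The special case $t_0 = 0$, flagged in the text, is easier: $\beta(0)$ is then a fixed point on $\partial U_a$ and the verification reduces to a direct computation.
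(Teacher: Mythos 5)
First, a point of comparison: the paper does not actually prove Proposition~\ref{prop:limblanding}; it is quoted from Milnor's work (\cite{CP1}, \cite{CP3}), so your attempt is being measured against the literature rather than against an argument in the text. Your dynamical setup (two rays at $\beta(2t_0)$, three simple preimages, exactly two preimage rays at $\beta(t_0+\tfrac12)$, sector-width bookkeeping to pin down the angles) is the right skeleton, but two steps do not survive scrutiny. The more serious one is the parameter-ray step. You deduce the landing of $\pray(\theta)$ and $\pray(\theta')$ at the root $F_{a^*}$ from the identity $2a^*=\beta(t_0+\tfrac12)$. That identity could only hold if $-a^*=\beta(t_0)$ (so that $v_{a^*}=\beta(2t_0)$ and $2a^*$ is the remaining preimage on $\partial U_{a^*}$), and this is impossible when $t_0$ is $m_2$-periodic: a periodic critical point is superattracting and cannot lie on $\partial U_{a^*}\subset J(F_{a^*})$. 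In fact the root of a limb with periodic internal argument is a parabolic map --- consistent with the paper's remark that the co-periodic rays $\pray(\theta)$, $\pray(\theta')$ land on parabolic parameters --- so $-a^*$, $v_{a^*}$ and $2a^*$ all lie in the parabolic basin, not on the Julia set, and your ``direct computation'' has nothing to compute. The landing of co-periodic parameter rays at parabolic parameters is a genuine theorem (this is where \cite{CP3} is doing the work), not a corollary of the dynamical picture at the root; likewise, extending the dynamical co-landing ``by continuity'' to the root passes through a parabolic bifurcation of the cycle $\{\beta(2^jt_0)\}$, where stability of ray landing is exactly what fails to be automatic.

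The second gap is that the combinatorial identification of the angles --- the actual content of the statement --- is deferred. To conclude that the two angles at $\beta(t_0+\tfrac12)$ are \emph{adjacent} multiples of $\tfrac{1}{3(3^q-1)}$ you need more than the pigeonhole distribution of the six preimage rays: you need (a) that exactly two rays, with $m_3$-periodic angles that are \emph{consecutive} multiples of $\tfrac{1}{3^q-1}$, land at $\beta(2t_0)$ (equivalently, that the sector at $\beta(t_0)$ cut off by the critical limb has angular width $\tfrac{3^{q-1}}{3^q-1}$, cf.\ Proposition~\ref{p:Theta_properties}), and (b) that the two preimage rays at $\beta(t_0+\tfrac12)$ come from the \emph{same} branch shift $k/3$, which follows from pulling the width-$\tfrac{1}{3^q-1}$ sector at $\beta(2t_0)$ back along the critical-point-free sectors at $\beta(2^jt_0)$, $1\le j\le q-1$, to get width $\tfrac{1}{3(3^q-1)}$ at $\beta(t_0+\tfrac12)$. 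You correctly flag this as the main obstacle, but as written the proposal establishes neither (a) nor (b), and also takes for granted that exactly two rays (rather than one, or more) land at each point of the cycle, which itself rests on the limb decomposition and local connectivity results of \cite{CP1}.
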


The above result implies the following.

\begin{cor}\label{cor:critlimbrays}
Let $\theta = \frac{i}{3(3^q-1)}$ and $\theta' = \frac{i+1}{3(3^q-1)}$. If the two parameter angles $\pray(\theta)$ and $\pray(\theta')$ land at the root point of $\Limb = \Limb_{t_0}^{\pm}$, then $R_a(\theta +1/3)$ and $R_a(\theta' - 1/3)$ land at the root of the critical limb of any map $F_a \in \Limb$.
\end{cor}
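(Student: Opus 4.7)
The approach is to apply Proposition~\ref{prop:limblanding} and pull back under $F_a$. The proposition gives, for $F_a \in \Limb$, that $R_a(\theta)$ and $R_a(\theta')$ co-land at $\beta(t_0 + 1/2)$, so their $F_a$-images $R_a(3\theta)$ and $R_a(3\theta')$ co-land at $\beta(2t_0) = F_a(\beta(t_0+1/2))$. Since $F_a(\beta(t_0)) = \beta(2t_0)$ and $\beta(t_0)$ is not a critical point of $F_a$, the two external rays landing at $\beta(t_0)$ must be drawn from the six preimages $R_a(\theta + j/3),\, R_a(\theta' + j/3)$ ($j = 0,1,2$).

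First, I would enumerate the three preimages of $\beta(2t_0)$ under $F_a$: by the B\"ottcher conjugacy with $m_2$ on $\partial U_a$ these are $\beta(t_0)$, $\beta(t_0+1/2)$, and a third point $p \in \CC \setminus \partial U_a$. Using Theorem~\ref{t:dynlimbs}(iv), the restriction $F_a|_{K_{t_0}}$ is of degree $2$ onto $K(F_a)$, so $\beta(2t_0)$ has exactly two preimages in $K_{t_0}$, which forces $p \in K_{t_0}$. Next, since $\beta(t_0)$ is not a critical point, $F_a$ is a local homeomorphism there and exactly two external rays land at $\beta(t_0)$; accounting for $R_a(\theta), R_a(\theta')$ at $\beta(t_0+1/2)$, the four remaining angles $\theta \pm 1/3,\, \theta' \pm 1/3$ distribute two-and-two between $\beta(t_0)$ and $p$.

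The decisive step is to identify the pairing. The two rays landing at $\beta(t_0)$ bound an open wedge in $\CC$ which contains the critical limb $K_{t_0}$ (hence $p$, and the two rays landing at $p$) while excluding $\partial U_a \setminus \{\beta(t_0)\}$, in particular $\beta(t_0 + 1/2)$. Writing the four candidate angles in cyclic order $\theta + 1/3 < \theta' + 1/3 < \theta + 2/3 < \theta' + 2/3$ on $\T$, the non-crossing property of external rays allows only the \emph{adjacent} pairing $\{\theta + 1/3, \theta' + 1/3\},\{\theta + 2/3, \theta' + 2/3\}$ and the \emph{interlocking} pairing $\{\theta + 1/3, \theta' + 2/3\},\{\theta' + 1/3, \theta + 2/3\}$. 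Only in the interlocking case is there a wedge at $\beta(t_0)$---bounded by $R_a(\theta + 1/3)$ and $R_a(\theta' + 2/3) = R_a(\theta' - 1/3)$, of angular length $1/3 + \frac{1}{3(3^q - 1)}$---whose interior contains the other pair (at $p$) but excludes $\theta$ and $\theta'$, and hence $\beta(t_0 + 1/2)$. This yields the stated angles.

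The main obstacle is the last step: ruling out the adjacent pairing. Combinatorially this amounts to showing that the free critical value $v_a$ lies \emph{inside} the sector cut out by $R_a(3\theta)$ and $R_a(3\theta')$ at $\beta(2t_0)$, so that the $F_a$-preimage of this sector has two rather than three connected components (one carrying the critical point $-a$ as a degree-$2$ branch, one a univalent copy). A deformation argument starting at the root of $\Limb$, where $-a = \beta(t_0)$ and $v_a = \beta(2t_0)$ and where $v_a$ enters the sector as $-a$ enters $K_{t_0}$, provides the geometric input needed to force the interlocking case throughout $\Limb$.
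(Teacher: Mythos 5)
Your argument through the third paragraph is correct and complete, and it reaches the conclusion by a mechanism different from the paper's at the decisive step. Both proofs share the same skeleton: push forward via Proposition~\ref{prop:limblanding} to get $R_a(3\theta),R_a(3\theta')$ landing at $\beta(2t_0)=F_a(\beta(t_0))$, observe that the two rays at $\beta(t_0)$ must come from the six preimage angles, and then pin down which pair. The paper settles the last step by noting that $F_a$ maps the critical limb \emph{onto} $K(F_a)$, so the arc of angles subtended by the two rays at the root of the critical limb must have length greater than $1/3$, and only the pair $\{\theta+1/3,\theta'-1/3\}$ achieves this. You instead locate the third preimage $p$ of $\beta(2t_0)$ inside $K_{t_0}$ (using that $F_a|_{K_{t_0}}$ has degree $2$ onto $K(F_a)$) and let the non-crossing of ray pairs plus the separation property of the two rays at $\beta(t_0)$ (wedge containing $K_{t_0}$, hence $p$, and excluding $\beta(t_0+1/2)$, hence $\theta,\theta'$) force the nested pairing with the outer pair $\{\theta+1/3,\theta'-1/3\}$ at $\beta(t_0)$. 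Checking the four cases as you indicate, this does eliminate the adjacent pairing as well as the wrong assignment of the nested pairing, so your case analysis is exhaustive; the trade-off is that you need the extra observation $p\in K_{t_0}$, whereas the paper needs only the surjectivity statement from Theorem~\ref{t:dynlimbs}(iv) and a careful reading of which arc carries the critical limb.

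One correction: your fourth paragraph is both unnecessary and, as written, not a proof. The adjacent pairing is already excluded by the wedge argument of your third paragraph (in the adjacent case, whichever pair lands at $\beta(t_0)$, the sector containing $p$ necessarily contains $\theta$ and $\theta'$, hence $\beta(t_0+1/2)\in\partial U_a$, a contradiction), so no information about the position of $v_a$ relative to the sector at $\beta(2t_0)$, and no deformation from the root of the limb, is needed. Had the argument genuinely depended on that deformation step, it would constitute a gap, since ``a deformation argument \dots provides the geometric input needed'' is a statement of intent rather than an argument: one would have to justify that the relevant combinatorial configuration cannot change as $a$ moves through $\Limb$, which is essentially the content of Proposition~\ref{prop:limblanding} again. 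You should simply delete the final paragraph and let the third stand as the conclusion.
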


\begin{proof}
Let $F_a \in \Limb$. The landing point $z$ of the internal ray of angle $t_0 + \frac{1}{2}$ in $U_a$ has the same image as the landing point $r$ of the internal ray of angle $t_0$ in $U_a$. By definition, $r$ is the root point of the critical limb of $F_a$. Since by Proposition~\ref{prop:limblanding} $R_a(\theta)$ and $R_a(\theta')$ land at $z$, then $R_a(3\theta)$ and $R_a(3\theta')$ land on $F_a(z) = F_a(r)$. Thus one of the rays $R_a(\theta + 1/3)$ and $R_a(\theta-1/3)$ must land on $r$, and similarly one of the rays $R_a(\theta' + 1/3)$ and $R_a(\theta'-1/3)$ must land on $r$. But since the critical limb maps onto the whole filled Julia set, the difference between the angles of the rays landing on $r$ must be greater than $1/3$. But the only choice of angles from the above options which achieve this are $\theta+1/3$ and $\theta'-1/3$. Thus $R_a(\theta +1/3)$ and $R_a(\theta' - 1/3)$ land on $r$.
\end{proof}

In the case where $t_0$ is strictly preperiodic under $m_2$, the limb $\Limb_{t_0}^\pm$ consists of a single function $F_a \in \partial \Hy_0$, which is the landing point of exactly one parameter ray. Furthermore, it follows from Theorem \ref{t:dynlimbs} that for this map $F_a$, the root $\beta(t_0)$ of the critical limb is the free critical point, and so the biaccessible points on $\partial U_a$ are $\beta(t_0)$ and its preimages. In particular, $F_a$ has no biaccessible periodic points.  



\subsection{Matings of postcritically finite polynomials}

In this section we outline the mating construction. Much of the material here can be found in e.g \cite{ShishTan,Tan:quadmat}.

\subsubsection{Formal Mating} Let $f$ and $g$ be monic degree $d$ polynomials. We define
\[
 \OC = \C \cup \{ \infty \cdot e^{2 \pi i t} : t \in \R / \Z \},
\]
the complex plane compactified with the circle of directions at infinity. We then continuously extend the two polynomials to the circle at infinity by defining
\[
  f(\infty \cdot e^{2 \pi i t}) =  \infty \cdot e^{2 d \pi i t} \quad \text{and} \quad g(\infty \cdot e^{2 \pi i t}) =  \infty \cdot e^{2 d \pi i t}.
\]
Label this extended dynamical plane of $f$ (respectively $g$) by $\OC_f$ (respectively $\OC_g$). We create a topological $2$-sphere $\Sigma_{f,g}$ by gluing the two extended planes together along the circle at infinity:
\[
 \Sigma_{f,g} = (\OC_f \sqcup \OC_g) / \sim
\]
where $\sim$ is the relation which identifies the point $\infty \cdot e^{2 \pi i t} \in \OC_f$ with the point $\infty \cdot e^{- 2 \pi i t} \in \OC_g$. The \emph{formal mating} is then defined to be the branched covering $f \uplus g \colon \Sigma_{f,g} \to \Sigma_{f,g}$ such that
\begin{align*}
    f \uplus g|_{\OC_f} \, =& \, f \quad \textrm{and} \\
    f \uplus g|_{\OC_g} \, =& \, g.
\end{align*} 

\subsubsection{Topological Mating}

A related concept to the formal mating is the topological mating. Let $f$ and $g$ be two monic degree $d$ polynomials with locally connected Julia set. First, construct the space $\Sigma_{f,g}$ as above, and define the \emph{ray-equivalence relation} $\approx$ as follows. We denote by $\sim_f$ the smallest equivalence relation on $\OC_f$ such that $x \sim_f y$ if and only if $x,y \in \overline{R}_{f}(t)$ for some $t$; the equivalence relation $\sim_g$ is defined analogously. Now $\approx$ is the smallest equivalence relation on $\Sigma_{f,g}$ which is generated by $\sim_f$ on $\OC_f$ and $\sim_g$ on $\OC_g$. The class $[x]$ of a point $x \in \Sigma_{f,g}$ will be called a \emph{ray-equivalence class}.

Evidently, there is a surjection $K(f) \sqcup K(g) \to \Sigma_{f,g} / \approx$ given by the composition
\[
  K(f) \sqcup K(g) \hookrightarrow \OC_f \sqcup \OC_g \to \Sigma_{f,g} \to \Sigma_{f,g} / \approx. 
\]
Furthermore, since $x \approx y$ implies $(f \uplus g) (x) \approx (f \uplus g) (y)$, we can uniquely define the topological mating as the map $f \mate g$ so that
\[
 \xymatrix{ K(f) \sqcup K(g) \ar[rr]^{f \uplus g} \ar[dd] && K(f) \sqcup K(g) \ar[dd]
	    \\ \\
	    \Sigma_{f,g} / \approx \ar[rr]_{f \mate g} && \Sigma_{f,g} / \approx}
\]
commutes. Under favourable conditions, the topological space $\Sigma_{f,g} / \approx$ is a topological 2-sphere, in which case we say that $f$ and $g$ are \emph{topologically mateable}. In fact, this occurs if and only if the equivalence relation $\approx$ is of Moore-type (see \cite{NotionsofMating}). That is, $\approx$ is a non-trivial closed relation such that each equivalence class is closed and no equivalence class separates the sphere $\Sigma_{f,g}$.

\subsubsection{Thurston's Theorem}
 
 Let $F \colon \Sigma \to \Sigma$ be an orientation-preserving branched self-covering of a topological $2$-sphere. We denote by $\Omega_F$ the critical set of $F$ and define
\[
 P_F = \bigcup_{n > 0} F^{\circ n}(\Omega_F)
\]
to be the postcritical set of $F$. We say that $F$ is postcritically finite if $|P_F| < \infty$. We call $F \colon \Sigma \to \Sigma$ a \emph{Thurston map} if it is a postcritically finite orientation-preserving branched self-covering of a topological $2$-sphere.
 
 \begin{defn}\label{d:Thurst}
Let $F \colon \Sigma \to \Sigma$ and $\widehat{F} \colon \widehat{\Sigma} \to \widehat{\Sigma}$ be Thurston maps. An \emph{equivalence} is given by a pair of orientation-preserving homeomorphisms $(\Phi,\Psi)$ from $\Sigma$ to $\widehat{\Sigma}$ such that 
    \begin{itemize}
     \item{$\Phi |_{P_{F}} = \Psi |_{P_{F}}$}
     \item{The following diagram commutes:
        \[
             \xymatrix{       (\Sigma,P_F) \ar[r]^{\Psi} \ar[d]_{F}    & (\widehat{\Sigma},P_{\widehat{F}}) \ar[d]^{\widehat{F}}
            \\ 
                (\Sigma,P_F) \ar[r]_{\Phi}                       & (\widehat{\Sigma},P_{\widehat{F}}) }
        \]}
    \item{$\Phi$ and $\Psi$ are isotopic via a family of homeomorphisms $t \mapsto \Phi_{t}$ which is constant on $P_F$.}
        \end{itemize}
\end{defn}

If there exists an equivalence as above, we say that $F$ and $\widehat{F}$ are \emph{equivalent}. Note that in particular, a postcritically finite rational map $R \colon \CC \to \CC$ on the Riemann sphere is a Thurston map. Hence it is natural to ask when a general Thurston map is equivalent to a rational map. In particular, since the formal mating of two postcritically finite polynomials is itself a Thurston map, we can ask if a given formal mating is equivalent to a rational map. This requires the notion of a \emph{Thurston obstruction}, which we define below.

\begin{defn}
Let $F$ be a Thurston map. A \emph{multicurve} is a collection $\Gamma = \{ \gamma_1, \ldots,\gamma_n \}$ of simple, closed,  non-peripheral curves such that each $\gamma_i \in \Gamma$ is disjoint from each other $\gamma_j$ and the $\gamma_i$ are pairwise non-homotopic relative to $P_F$. A multicurve is called $F$-\emph{stable} if for all $\gamma_{i} \in \Gamma$, all the non-peripheral components of $F^{-1}(\gamma_{i})$ are homotopic relative to $P_F$ to elements of $\Gamma$. If $\Gamma$ is a multicurve (not necessarily $F$-stable), we define the non-negative matrix $F_\Gamma = (f_{ij})_{n \times n}$ as follows. 
\[
 f_{ij} = \sum_{\gamma' \in F^{-1}(\gamma_j), \gamma' \sim \gamma_i} \frac{1}{\deg F  \colon \gamma' \to \gamma_{j}}
\]
where $\deg$ denotes the degree of the map and $\gamma' \sim \gamma_i$ denotes that $\gamma'$ is homotopic to $\gamma_i$ rel $P_F$. By standard results on non-negative matrices (see \cite{Gantmacher}), this matrix $(f_{ij})$ will have a leading non-negative eigenvalue $\lambda$. We write $\lambda(\Gamma)$ for the leading eigenvalue associated to the multicurve $\Gamma$.
\end{defn}


The importance of the above is due to the following rigidity theorem. A proof can be found in \cite{DouadyHubbard:Thurston} or \cite{HubbardTeichvol2}.

 \begin{thm}[Thurston]\label{t:Thurston}\mbox{}
\begin{enumerate}
  \item{A Thurston map $F \colon \Sigma \to \Sigma$ of degree $d \geq 2$ with hyperbolic orbifold is equivalent to a rational map $R \colon \CC \to \CC$ if and only if there are no $F$-stable multicurves with $\lambda(\Gamma) \geq 1$.}
  \item{Any Thurston equivalence of rational maps $F$ and $\widehat{F}$ with hyperbolic orbifolds is represented by a M\"{o}bius conjugacy.}
\end{enumerate}
\end{thm}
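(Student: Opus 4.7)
The plan is to translate the realization problem in part (i) into a fixed-point problem on Teichm\"uller space, following the classical approach of Douady--Hubbard. Let $\mathcal{T}(\Sigma, P_F)$ denote the Teichm\"uller space of complex structures on the marked sphere $(\Sigma, P_F)$. Since $F$ is an orientation-preserving branched covering and $P_F$ is forward-invariant under $F$, pulling back a complex structure on $(\Sigma, P_F)$ via $F$ gives a well-defined holomorphic self-map $\sigma_F \colon \mathcal{T}(\Sigma, P_F) \to \mathcal{T}(\Sigma, P_F)$, called the Thurston pullback. The key initial observation is that $F$ is Thurston equivalent to a rational map if and only if $\sigma_F$ has a fixed point, since such a fixed point is precisely a complex structure in which $F$ becomes holomorphic up to isotopy rel $P_F$.

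The next step is to exploit the contraction properties of $\sigma_F$. Working with the coderivative, which acts as a pushforward on integrable holomorphic quadratic differentials, one shows via a Gr\"otzsch-type estimate that $\sigma_F$ is weakly contracting for the Teichm\"uller metric and strictly contracting on compact subsets. The hyperbolic orbifold hypothesis is essential here to exclude the exceptional flat cases, which admit an independent, rigid classification. From the contraction one deduces the dichotomy: either $\sigma_F$ has a fixed point, giving the desired rational realization, or every orbit eventually leaves every compact subset of $\mathcal{T}(\Sigma, P_F)$.

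The heart of the proof is to tie this dichotomy to the combinatorial obstruction. The easy direction is to show that an $F$-stable multicurve $\Gamma$ with $\lambda(\Gamma) \geq 1$ prevents a rational realization: for any candidate rational $R$, taking maximal-modulus annuli homotopic to the curves of $\Gamma$ in $\CC \setminus P_R$ and pulling them back by $R$ yields, via the standard Gr\"otzsch inequality, a sequence of annulus moduli that cannot be bounded on a sphere of finite area, a contradiction. The hard converse direction, which I expect to be the main technical obstacle, is to extract such an obstructing multicurve from a divergent orbit of $\sigma_F$. This requires choosing a suitable family of simple closed curves whose extremal lengths degenerate along the orbit, verifying by a compactness and isotopy argument that this family can be taken $F$-stable, and controlling the rate of degeneration carefully enough to conclude $\lambda(\Gamma) \geq 1$.

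Part (ii) then follows from the same contraction principle. If $F$ and $\widehat{F}$ are rational and Thurston equivalent, each corresponds to a fixed point of its own pullback map, and the equivalence identifies these fixed points. The weak contraction of $\sigma_F$, combined with the hyperbolic orbifold hypothesis, forces the fixed point to be unique, so the equivalence descends to an isometry of Teichm\"uller space fixing a point and respecting the marking on $P_F$. A standard uniformization argument then identifies this isometry with a M\"obius transformation, giving the desired conjugacy.
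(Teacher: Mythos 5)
The paper does not prove this statement: it is quoted as Thurston's theorem, with the proof deferred entirely to the cited reference of Douady and Hubbard. Your outline is precisely the standard Douady--Hubbard argument via the pullback map $\sigma_F$ on Teichm\"uller space, so it matches the approach of the cited proof; you also correctly flag that the real technical content lies in extracting an $F$-stable multicurve with $\lambda(\Gamma)\geq 1$ from a divergent orbit, which your sketch identifies but does not carry out.
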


An $F$-stable multicurve with $\lambda(\Gamma) \geq 1$ is called a \emph{(Thurston) obstruction}. The condition that $F$ has a hyperbolic orbifold is a purely combinatorial one; this condition can be checked by inspecting the dynamics on the union of the critical and postcritical sets of $F$. We remark that since the maps in this article have two fixed critical points, the respective orbifolds are guaranteed to be hyperbolic. For further details, see \cite{DouadyHubbard:Thurston}. Following \cite{ShishTan}, we will say that a Thurston obstruction $\Gamma$ is \emph{irreducible} if the associated matrix $F_\Gamma$ is irreducible. The following result from \cite{ShishTan} (see also \cite{Tan:quadmat}) shows that it suffices to search for irreducible obstructions instead of Thurston obstructions. 

\begin{prop}\label{p:irreducible} (\cite{ShishTan}, Lemma 3.5)
A Thurston map $F$ with hyperbolic orbifold is not equivalent to a rational map if and only if $F$ has an irreducible obstruction.
\end{prop}

In this paper, we will mainly be concerned with a special kind of obstruction: Levy cycles. 


\begin{defn}\label{d:Levy}
 A multicurve $\Gamma = \{ \gamma_{1}, \ldots , \gamma_{n} \}$ is a \emph{Levy cycle} if for each $i =1,\ldots,n$, the curve $\gamma_{i-1}$ (or $\gamma_{n}$ if $i = 1$) is homotopic to some component $\gamma_{i}'$ of $F^{-1}(\gamma_{i})$ (rel $P_{F}$) and the map $F \colon \gamma_{i}' \to \gamma_{i}$ is a homeomorphism. Furthermore
 \begin{itemize}
 \item We say $\Gamma$ is a \emph{degenerate} Levy cycle if the connected components of $\Sigma \setminus \bigcup_{i=1}^n \gamma_i$ are $D_1, \ldots , D_n$  and an additional component $C = \Sigma \setminus \bigcup_{i=1}^n \overline{D}_i$, where each $D_i$ is a disk and moreover for each $i$ the preimage $F^{-1}(D_{i+1})$ contains a component $D_{i+1}'$ which is isotopic to $D_i$ relative to $P_F$ and is such that $F \colon D_{i+1}' \to D_{i+1}$ is a homeomorphism.
 \item A degenerate Levy cycle is called a \emph{removable} Levy cycle if in addition, for all $k \geq 1$ and all $i$, the components of $F^{-k}(D_i)$ are disks.
 \end{itemize}
\end{defn} 

Note that a Levy cycle is an example of an irreducible obstruction. There is a close relationship between Levy cycles and periodic cycles of ray cycles, as given in the following theorem of Tan.

 \begin{prop}[\cite{ShishTan,Tan:quadmat}]\label{p:limset}
  Let $F = f \uplus g$. Then every Levy cycle $\Gamma$ of $F$ corresponds to a unique cycle of ray classes $[x_0],[x_1],\dotsc,[x_{n-1}]$ (called the limit set of $\Gamma$) and finitely connected $X_i \subset [x_i]$ for each $i$ such that $F \colon [x_i] \to [x_{i+1}]$ and $F \colon X_i \to X_{i+1}$ are homeomorphisms. Moreover, $\Gamma$ is isotopic to a tubular neighbourhood of $\bigcup_{i=0}^{n-1} X_i$. In particular
  \begin{enumerate}
  \item If $\Gamma$ is degenerate, then each $X_i$ is a tree and contains at least two elements of $P_F$.
  \item If $\Gamma$ is not degenerate, each $X_i$ contains a closed loop.
  \end{enumerate}
  Conversely, if $[x]$ is a periodic ray class which contains a closed loop or at least two postcritical points, then each boundary curve of a tubular neighbourhood of $[x]$ generates a Levy cycle.
 \end{prop}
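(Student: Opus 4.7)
The plan is to extract the limit set $X_i$ from the geometric Levy cycle by iterating the inverse branches that the Levy condition itself provides, then recognise the resulting continua as subsets of ray classes and read off the dichotomy from the degenerate/non-degenerate distinction. For each $\gamma_i \in \Gamma$, the Levy hypothesis gives a preimage component $\gamma_i' \subset F^{-1}(\gamma_{i+1})$ isotopic to $\gamma_i$ rel $P_F$, with $F\colon \gamma_i' \to \gamma_{i+1}$ a homeomorphism. Let $D_i$ be the disk component of $\Sigma \setminus \gamma_i$ specified by the Levy structure (in the degenerate case, the $D_i$ of Definition~\ref{d:Levy}; in general, the component used to choose the isotopies). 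Pulling $D_{i+1}$ back through the homeomorphic preimage yields $D_i^{(1)} \subset D_i$ isotopic to $D_i$ rel $P_F$, and iterating along the period-$n$ cycle gives a nested sequence of closed disks. Set
\[
X_i \;=\; \bigcap_{k \geq 0} \overline{D_i^{(k)}}.
\]
This is a non-empty compact continuum, and the compatible homeomorphisms on each level descend to a homeomorphism $F\colon X_i \to X_{i+1}$.

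Next, one must place $X_i$ inside a single ray-equivalence class. Use that $F = f \uplus g$ preserves the foliation of $\Sigma_{f,g}\setminus (K(f) \cup K(g))$ by external rays and acts continuously on the equator. Any ray the curve $\gamma_i$ crosses is tracked through the homeomorphic pullbacks; the isotopy may be chosen so the diameter of $D_i^{(k)}$ transverse to the foliation shrinks, forcing $X_i \subseteq K(f)\cup K(g)$ together with finitely many external rays, and all rays retained in the limit are glued to each other under $\approx$ because each pullback is a homeomorphism respecting the ray structure. Thus $X_i \subset [x_i]$, and the classes $[x_0],\ldots,[x_{n-1}]$ form an $F$-cycle. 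For the dichotomy: if $\Gamma$ is degenerate then each $D_i$ is a disk mapped homeomorphically to $D_{i+1}$, so no essential loop can survive the nested intersection and $X_i$ is a tree; moreover, non-peripherality of $\gamma_i$ forces $D_i$ to contain at least two points of $P_F$, and these preperiodic points remain in every $D_i^{(k)}$, hence in $X_i$. If $\Gamma$ is not degenerate, then some complementary component of $\gamma_i$ fails to be a disk (or fails the preimage-disk condition in the definition), and the homeomorphic pullback preserves this nontrivial topology in the limit, forcing $X_i$ to contain a closed loop.

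For the converse, given a periodic ray class $[x]$ containing either a closed loop or two postcritical points, take a thin tubular neighbourhood $N$ of (a suitable finitely connected $X \subset [x]$). Each boundary curve of $N$ is non-peripheral — it separates either the loop from points outside, or two postcritical points from each other — and the preimage structure inherited from $F^n$ acting on $[x]$ provides a component of $F^{-1}(\partial N)$ isotopic to $\partial N$ on which $F$ is a homeomorphism, yielding the required Levy cycle. The main obstacle in this programme is the second paragraph: pinning the geometric limit $X_i$ inside a single ray-equivalence class. Ray classes in the formal mating are transitive closures of the two polynomial ray relations glued at the equator and can be topologically intricate, so careful bookkeeping of which rays are crossed by $\gamma_i$, and how the homeomorphic pullback redistributes them, is the heart of the argument; this is where the original proofs in \cite{Tan:quadmat} and \cite{ShishTan} concentrate their effort.
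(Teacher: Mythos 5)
The paper does not prove this proposition; it is quoted from \cite{ShishTan,Tan:quadmat}, so your attempt must be measured against the standard arguments there. Your overall shape (pull back along the Levy cycle, extract a limit set, read off the dichotomy, reverse the construction for the converse) is the right one, but the central mechanism you propose does not work. You define $X_i$ as a decreasing intersection of closed disks $\overline{D_i^{(k)}}$. First, the Levy condition only gives a preimage component $D_{i+1}'$ \emph{isotopic} to $D_i$ rel $P_F$, not contained in it, so the nesting requires a choice of isotopy at every stage and the resulting intersection is not canonical. Second, and fatally for part (ii): if a decreasing intersection of closed topological disks in $S^2$ contains a simple closed curve, it must contain the closure of one complementary component of that curve, i.e.\ a whole closed disk; so your $X_i$ can never be a closed loop inside a ray class (ray classes have empty interior). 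In the non-degenerate case there is in any case no distinguished disk to pull back --- your statement that ``some complementary component of $\gamma_i$ fails to be a disk'' is false on a sphere, where both sides of a simple closed curve are disks; non-degeneracy is a property of the complement of the \emph{entire} multicurve, not of a single $\gamma_i$.

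The step you yourself flag as the heart of the matter --- confining the limit to a single, \emph{periodic} ray class --- is precisely the content of the proposition and is only gestured at. The argument in \cite{Tan:quadmat,ShishTan} does not shrink disks; it tracks the finite set of external angles whose rays $\gamma_i$ crosses essentially. Because $F$ restricted to the preimage curve is a homeomorphism, this angle set pulls back injectively under division by $d$, so its cardinality is non-increasing along the cycle, hence eventually constant and periodic; this forces the curves to be isotopic to boundaries of tubular neighbourhoods of finite connected unions $X_i$ of ray closures inside a periodic cycle of ray classes, and the tree/loop dichotomy then falls out of whether the complement of the multicurve has the disk decomposition of Definition~\ref{d:Levy}. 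Your converse direction likewise asserts, without justification, that $F$ is a homeomorphism on the relevant preimage component of $\partial N$; this needs an argument about the degree of $F$ on the ray class cycle (note that in Proposition~\ref{p:conjugate} the loop passes through both free critical points, so the claim is not automatic). As written, the proposal is a plausible outline with the decisive steps missing or resting on a construction that cannot produce the stated conclusion.
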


 \begin{cor}\label{cor:twocritvals}
  Let $\Gamma$ be a non-removable degenerate Levy cycle of the mating $F = f \uplus g$ of postcritically finite polynomials. Then there exists a ray class under the mating which contains two critical values of $F$.
 \end{cor}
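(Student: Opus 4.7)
The plan is to combine the structural data of Proposition~\ref{p:limset} with a Riemann--Hurwitz calculation applied to the non-disk preimage that witnesses non-removability. By Proposition~\ref{p:limset}, the degenerate Levy cycle $\Gamma$ yields a cyclic limit set of ray classes $[x_0], \ldots, [x_{n-1}]$ together with trees $X_i \subset [x_i]$, on each of which $F$ restricts to a homeomorphism and each of which contains at least two postcritical points of $F$. Since the Levy cycle is degenerate, each curve $\gamma_i \in \Gamma$ bounds a topological disk $D_i$ enclosing $X_i$.

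Unpacking Definition~\ref{d:Levy}, the non-removability of $\Gamma$ supplies minimal integers $k \geq 1$ and an index $i$ for which some component $V$ of $F^{-k}(D_i)$ fails to be a disk. By the minimality of $k$, the image $U := F(V)$ is itself a disk (with $U = D_i$ when $k = 1$). Applying the Riemann--Hurwitz formula to the branched cover $F|_V \colon V \to U$, and using that $\chi(U) = 1$ while $\chi(V) \leq 0$, yields
\[
\sum_{c \in V} (\deg_c F - 1) \;\geq\; \deg(F|_V) \;\geq\; 2.
\]
Every critical point of $F = f \uplus g$ is simple, because each cubic polynomial in $\Sone$ has only simple critical points; hence $V$ must contain at least two distinct critical points $c_1, c_2$ of $F$, whose images $F(c_1), F(c_2) \in U$ are critical values of the mating.

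The main obstacle is to verify that $F(c_1)$ and $F(c_2)$ lie in a common ray class. The case $k = 1$ is the cleanest: both critical values lie in $D_i$, and one argues that the tree $X_i \subset [x_i]$ already captures all postcritical content of $F$ trapped by the Levy disk $D_i$, forcing $F(c_1), F(c_2) \in X_i \subset [x_i]$. For general $k \geq 2$, the plan is to iterate forward by $F^{k-1}$, pushing both critical values into $D_i$ and hence into $[x_i]$, and then to exploit the connectedness of $V$ together with the fact that $U$ is a single disk component of $F^{-(k-1)}(D_i)$ to transfer this identification backward through the branch of $F^{-(k-1)}$ supported on $U$; the preimage graph $U \cap F^{-(k-1)}(X_i)$ plays the role of a tree representative of a single ray class in $U$ containing both $F(c_1)$ and $F(c_2)$. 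Once $F(c_1) \approx F(c_2)$ is established, the ray class $[F(c_1)] = [F(c_2)]$ contains two critical values of $F$, completing the proof.
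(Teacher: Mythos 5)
Your argument is, at heart, the same as the paper's: both rest on the Riemann--Hurwitz observation that a disk containing at most one critical value has only disk preimage components, combined with the identification (via Proposition~\ref{p:limset}) of the Levy disks $D_i$ with tubular neighbourhoods of the trees $X_i \subset [x_i]$. The paper simply runs the argument contrapositively: if no ray class in $\bigcup_{n\geq 0}F^{-n}([x_i])$ contained two critical values, then every iterated preimage of the tubular neighbourhood of $X_i$, and hence of $\Delta = D_i$, would be a disk, contradicting non-removability. Your direct version --- locate the first non-disk preimage component $V$, and extract two critical points from the branched cover $V \to U$ --- is fine up to that point, and the degree computation is correct.

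The genuine gap is the step you yourself flag: showing that the two critical values $F(c_1), F(c_2) \in U$ lie in a \emph{single} ray class. Your proposed justification, that ``$U \cap F^{-(k-1)}(X_i)$ plays the role of a tree representative of a single ray class in $U$,'' is precisely the assertion that needs proof, and it is not automatic: a priori the disk $U$ could contain several components of $F^{-(k-1)}(X_i)$ lying in \emph{different} ray classes of the (disconnected) set $F^{-(k-1)}([x_i])$, with one critical value in each, in which case no single ray class need contain two critical values. To close this, first isotope $D_i$ rel $P_F$ to a sufficiently small regular neighbourhood $N$ of the finite tree $X_i$ so that, for the finitely many iterates $j \leq k$, each component of $F^{-j}(N)$ contains exactly one component of $F^{-j}(X_i)$ and meets $P_F$ only in that component; this is harmless because whether a component of $F^{-j}(D_i)$ is a disk is an isotopy invariant rel $P_F$ (the isotopy lifts under $F^{\circ j}$, since the critical values of $F^{\circ j}$ lie in $P_F$). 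With that normalization, $P_F \cap U \subseteq F^{-(k-1)}(X_i) \cap U$ is contained in one connected union of ray closures, hence in one ray class, and since critical values are postcritical this gives $F(c_1) \approx F(c_2)$. (A smaller point: ``every critical point of $F$ is simple'' is not among the hypotheses --- the corollary is stated for arbitrary postcritically finite polynomials, and even in $\Sone$ the map $F_0(z)=z^3$ has a double critical point --- so strictly speaking Riemann--Hurwitz only yields two critical points counted with multiplicity; the paper's own proof makes the same tacit assumption, and it is harmless in the cases where the corollary is applied.)
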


 \begin{proof}
  First, note that by the Riemann-Hurwitz formula, if $D$ is a disk such that $F^{-1}(D)$ is not a disk, then $D$ must contain (at least) two critical values. Since $\Gamma$ is not removable, there exists a disk component $\Delta$ of $\Sigma \setminus \Gamma$ and some $k$ such that $F^{-k}(\Delta)$ contains a non-disk component $C$.
  
  By Proposition~\ref{p:limset}, there exists some $i$ such that $\partial \Delta$ is isotopic to a tubular neighbourhood of $X_i$ which is a subset of a periodic ray class $[x_i]$. Thus this tubular neighbourhood (isotopically) contains $\Delta$. We claim that there exists a preimage of $[x_i]$ which contains two critical values of $F$. If not, then no preimage of $X_i \subset [x_i]$  can contain two critical values of $F$. But by Riemann-Hurwitz, all preimages of a tubular neighbourhood of $X_i$ would have to be disks. But then all preimages of $\Delta$ would also be disks, which is a contradiction. Thus $[x_i]$ must contain two critical values of $F$.
 \end{proof}

We remark that the converse of the corollary is not true: it is possible that two critical values belong to the same ray class but the mating is not obstructed, see Example~\ref{ex:critptsidentified}. The following theorem from \cite{CubicObs} indicates that all obstructions to matings of pairs of postcritically finite polynomials in $\Sone$ are topological. 

 \begin{thm}\label{mthm2}
  Let $f$ and $g$ be monic postcritically finite polynomials in $\Sone$. Then any obstruction to the formal mating $f \uplus g$ contains a Levy cycle. 
 \end{thm}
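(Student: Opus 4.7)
The proof plan follows Tan Lei's strategy from the quadratic case \cite{Tan:quadmat}, using as leverage the two fixed critical points of $F = f \uplus g$. Recall that $F$ has four simple critical points: the fixed critical points $\alpha_f \in \OC_f$ and $\alpha_g \in \OC_g$ (both automatically in $P_F$), together with the two free critical points inherited from $f$ and $g$. The goal is to show that any $F$-stable multicurve $\Gamma$ with $\lambda(\Gamma) \geq 1$ admits a sub-multicurve forming a Levy cycle.

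As a first reduction, I would apply Perron--Frobenius to the non-negative transition matrix $F_\Gamma$ and restrict attention to an irreducible block supporting the leading eigenvector. This yields a sub-multicurve $\Gamma' \subseteq \Gamma$ whose pullback graph is strongly connected and which still satisfies $\lambda(\Gamma') \geq 1$; any Levy cycle found inside $\Gamma'$ automatically lies inside $\Gamma$. It therefore suffices to work with $\Gamma'$.

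The core step is a Riemann--Hurwitz analysis of preimages. Each non-peripheral preimage component $\gamma'$ of some $\gamma_j \in \Gamma'$ maps to $\gamma_j$ with degree $d \in \{1,2,3\}$ and, when $\gamma'$ is isotopic to $\gamma_i$ rel $P_F$, contributes $1/d$ to the matrix entry $f_{ij}$. For $d \geq 2$, the branched-cover disk bounded by $\gamma'$ must contain at least $d-1$ critical points counted with multiplicity. Since $F$ has only four critical points in total, and the two fixed ones sit in opposite hemispheres of $\Sigma_{f,g}$ with rigidly determined orbits, the combined critical mass available to support degree-$\geq 2$ preimages around any cycle in the pullback graph is strictly bounded; I would aim to show that the total degree-$\geq 2$ weight along any closed cycle in the graph of $\Gamma'$ is strictly less than $1$.

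Combining strong connectedness with this degree budget, the graph on $\Gamma'$ must then contain a cycle $\gamma_{i_1} \to \cdots \to \gamma_{i_k} \to \gamma_{i_1}$ along which every edge is a degree-$1$ preimage relation, which is by definition a Levy cycle. The main obstacle will be the detailed bookkeeping in the case when both free critical points happen to lie in a single preimage disk, producing a degree-$3$ preimage of weight $1/3$ that could in principle combine with lower-weight contributions around a cycle in ways that sustain $\lambda \geq 1$ without forcing degree-$1$ edges. Ruling this out requires using not merely that $\alpha_f$ and $\alpha_g$ are periodic critical points but that they are genuinely \emph{fixed}, which is precisely what excludes the Shishikura--Tan period-$3$ obstructions of \cite{ShishTan} from the $\Sone$ setting.
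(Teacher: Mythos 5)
A preliminary remark: this paper does not actually prove Theorem~\ref{mthm2}; it is imported wholesale from Part I \cite{CubicObs}, so there is no in-text proof here to measure your attempt against. Judged on its own terms, your proposal is a strategy outline rather than a proof, and the gap sits exactly where the content of the theorem lies. The Perron--Frobenius reduction to an irreducible sub-multicurve and the Riemann--Hurwitz count (a degree-$d$ preimage disk must contain $d-1$ critical points, counted with multiplicity) are fine and standard. But the inference you then want --- that $\lambda(\Gamma')\geq 1$ together with a ``degree budget'' forces a closed cycle consisting entirely of degree-$1$ edges --- is not valid as stated. For an irreducible non-negative matrix, $\lambda\geq 1$ does \emph{not} imply the existence of a single cycle whose edge-weight product is at least $1$: a matrix entry can equal $1+1=2$ (two degree-one preimage components of one curve, both isotopic to another curve of the multicurve) and compensate multiplicatively for an entry $1/2$ (a degree-two component) elsewhere on the cycle, giving $\lambda=1$ with no all-degree-one cycle and hence no Levy cycle. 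This is precisely the mechanism of the Shishikura--Tan obstruction \cite{ShishTan}, which satisfies every constraint your ``four critical points, two of them fixed in opposite hemispheres'' budget imposes except the one that matters. Your proposed intermediate claim (``the total degree-$\geq 2$ weight along any closed cycle is strictly less than $1$'') is either trivially true (any cycle containing a degree-$\geq 2$ edge already has weight product at most $1/2$) or fails to imply the conclusion, and you flag the hard case yourself without resolving it.

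What is missing is any use of the dynamical structure that actually distinguishes $\Sone$: the fixed critical point has a completely invariant immediate basin $\overline{U}_a$ in each hemisphere, and the filled Julia sets decompose into $\overline{U}_a$ together with limbs attached at single boundary points (Theorem~\ref{t:dynlimbs}). A curve of an irreducible obstruction has to be analysed against this decomposition --- for instance by tracking how it separates the postcritical points sitting in $\overline{U}_a$ from those in the limbs, or by intersecting it with an invariant arc or tree --- in order to rule out the multiplicities (several degree-one preimages of a single curve all isotopic to the same curve) that sustain $\lambda\geq 1$ without producing a Levy cycle. The phrase ``the two fixed ones sit in opposite hemispheres with rigidly determined orbits'' gestures at this but is never converted into a constraint on the transition matrix; until it is, the argument does not exclude a Shishikura--Tan-type configuration, and the proof is not complete.
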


 Theorem~\ref{mthm2} does not hold for general cubic polynomials: in \cite{ShishTan}, the authors constructed a mating of two postcritically finite cubic polynomials which was obstructed, but this obstruction was not a Levy cycle. Both the polynomials had critical points of period $3$. 
 
Theorem~\ref{mthm2} allows us to immediately prove the following.
 
\begin{cor}\label{c:postcritfinmatings}
Let $t_0$ be strictly preperiodic under $m_2$ and suppose $F_a$ is the unique element of the limb $\Limb_{t_0}^\pm$. If $F_b$ is a postcritically finite polynomial in $\Sone$ and the mating $F_a \uplus F_b$ is obstructed, then this obstruction contains a degenerate Levy cycle. 
\end{cor}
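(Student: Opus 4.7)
The strategy is to apply Theorem~\ref{mthm2} to produce a Levy cycle and then show, by contradiction, that this Levy cycle must be degenerate. The main input is the observation recorded at the end of Section~\ref{ss:paramlimbs}: since $t_0$ is strictly preperiodic under $m_2$, the Hubbard tree of the unique $F_a \in \Limb_{t_0}^\pm$ is contained in $\overline{U}_a$, and hence $F_a$ has no biaccessible periodic points in $J(F_a)$.

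By Theorem~\ref{mthm2}, any Thurston obstruction to $F_a \uplus F_b$ contains a Levy cycle $\Gamma$. Assume for contradiction that $\Gamma$ is not degenerate. Proposition~\ref{p:limset} then furnishes a periodic cycle of ray classes $[x_0], \dotsc, [x_{n-1}]$ of $F := F_a \uplus F_b$ together with finitely connected subsets $X_i \subset [x_i]$ such that $F \colon X_i \to X_{i+1}$ is a homeomorphism (indices mod $n$) and, by part (ii), each $X_i$ contains a closed loop.

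The key combinatorial observation is that a ray class in a formal mating has the natural structure of a bipartite graph: vertices are the landing points of rays belonging to the class, partitioned according to whether they lie in $J(F_a)$ or $J(F_b)$, and each angle $t$ for which $R_{F_a}(t)$ (equivalently $R_{F_b}(-t)$) lies in the class contributes an edge joining its $J(F_a)$-landing point to its $J(F_b)$-landing point through the circle at infinity. A closed loop in $X_i$ then projects to a cycle in this bipartite graph, and every vertex on such a cycle has local degree at least two. Hence the loop contains at least one point $z \in J(F_a)$ at which two or more external rays of $F_a$ land, i.e.\ a biaccessible point of $F_a$.

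To close the argument, note that the homeomorphism $F^n \colon X_i \to X_i$ permutes the finite set of landing points in $X_i$ (because $F$ carries rays to rays and landing points to landing points), and on $J(F_a)$-vertices it agrees with $F_a^n$. Thus the biaccessible point $z$ produced above is periodic under $F_a$, contradicting the absence of biaccessible periodic points of $F_a$. We conclude that $\Gamma$ must be degenerate. The step I expect to require the most care is the bipartite-graph reduction: showing directly from the definition of the ray-equivalence relation that a topological closed loop in $X_i$ really does traverse vertices of degree at least two on the $J(F_a)$-side, and not only branching on the $J(F_b)$-side.
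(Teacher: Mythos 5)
Your proposal is correct and follows essentially the same route as the paper: apply Theorem~\ref{mthm2} to get a Levy cycle, invoke Proposition~\ref{p:limset} to deduce that a non-degenerate Levy cycle forces a periodic ray class containing a closed loop, and derive a contradiction with the fact that $F_a$ has no biaccessible periodic points. The paper leaves the bipartite-graph step (that a loop in a ray class forces a periodic biaccessible point on the $J(F_a)$ side) implicit, whereas you spell it out; the reasoning is the same.
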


\begin{proof}
By Theorem~\ref{mthm2}, any obstruction of the mating contains a Levy cycle. By Proposition~\ref{p:limset}, if the obstruction was non-degenerate Levy cycle, then there would have to be a periodic ray cycle containing a closed loop. But $F_a$ has no biaccessible periodic points, so this is impossible.
\end{proof}

\subsubsection{Degenerate Matings}

We now discuss a technique due to Shishikura \cite{ShishEssMate} (and also \cite{ShishTan}) which allows us to modify a branched covering with removable Levy cycles so that it is no longer obstructed. This allows us to deal with cases where two polynomials are topologically mateable, but the formal mating has a Thurston obstruction. 

Let $F$ be a Thurston map. We need to set up a partial ordering on the (possibly empty) set $\RL(F)$ of isotopy classes of removable Levy cycles for $F$. First, recall from Definition~\ref{d:Levy} that if $\Lambda$ is a removable Levy cycle, then the set $\Sigma \setminus \Lambda$ consists of $n$ disks $D_1,\dotsc,D_n$ and an additional component $C$, such that for each $j$, the preimage $F^{-1}(D_j)$ has a component $D_j'$ isotopic to $D_{j-1}$ and the map $F \colon D_{j}' \to D_j$ is a homeomorphism. Also, by definition, for each $k$, all components of $F^{-k}(D_1)$ are disks. Define $\mathcal{D}(\Lambda) = \bigcup_{j=1}^n D_j$.

We now construct our partial order on $\RL(F)$. For two elements $\lambda_1, \lambda_2 \in \RL$, we say $\lambda_1 < \lambda_2$ if there exists representatives $\Lambda_1$ and $\Lambda_2$ of $\lambda_1$ and $\lambda_2$ respectively such that $\mathcal{D}(\Lambda_1) \subset \mathcal{D}(\Lambda_2)$. Define $\RL'$ to be the set of $\lambda \in \RL$ which are maximal under the partial order and such that for any Levy cycle $\Gamma$, the geometric intersection number  $\lambda \cdot \Gamma = 0$ (see e.g \cite[p.~28]{PrimerMCG} for the definition of geometric intersection number). In the case of matings, we have the following.

\begin{prop}[\cite{ShishTan}]\label{p:RLprime}
 Let $F = f \uplus g$ be the formal mating of two postcritically finite polynomials. The set $\RL'(F)$ consists of boundary curves of tubular neighbourhoods of periodic cycles $\{ [x_1],[x_2],\dotsc,[x_m]\}$ of ray classes such that each $[x_i]$ contains at least two points of $P_F$ and no ray class in $\bigcup_{n \geq 0} F^{-n}([x_i])$ contains a closed loop. Furthermore if $\Sigma / \approx$ is homeomorphic to $S^2$, then all Levy cycles for $F$ are removable. 
\end{prop}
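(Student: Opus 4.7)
The plan is to use Proposition~\ref{p:limset} as the dictionary between Levy cycles and periodic cycles of ray classes, then impose the removability, maximality, and zero-intersection conditions in turn to pin down $\RL'(F)$ exactly.

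For the characterization, I would argue both directions. Starting with $\lambda \in \RL'(F)$ represented by $\Lambda$: since $\Lambda$ is removable (hence degenerate), Proposition~\ref{p:limset} produces a periodic ray class cycle $[x_0],\dotsc,[x_{n-1}]$ and trees $X_i \subseteq [x_i]$, each containing at least two points of $P_F$, such that $\Lambda$ is isotopic to the boundary of a tubular neighbourhood of $\bigcup_i X_i$. Maximality in the partial order forces $X_i = [x_i]$, since enlarging $X_i$ within $[x_i]$ strictly enlarges $\mathcal{D}(\Lambda)$ up to isotopy while still yielding a removable Levy cycle. Removability then translates, via Riemann--Hurwitz applied to each disk $D_j$, into the statement that no iterated preimage of $[x_i]$ contains a closed loop, as such a loop would force a preimage component to have strictly negative Euler characteristic and thus fail to be a disk. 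Conversely, given any periodic ray class cycle with the stated properties, the converse part of Proposition~\ref{p:limset} yields a Levy cycle, removability follows from the no-loop condition on iterated preimages, and the zero geometric intersection number with every other Levy cycle $\Gamma$ follows by choosing tubular neighbourhoods disjoint from the distinct ray classes supporting $\Gamma$. For the final assertion, suppose $\Sigma/\approx \cong S^2$. Then $\approx$ is of Moore type, so no ray class separates $\Sigma$. A closed loop contained in a ray class would separate the sphere, so no ray class contains a closed loop; since preimages of ray classes under $F$ are unions of ray classes, the no-closed-loop property propagates to all iterated preimages. Hence Proposition~\ref{p:limset}(2) forces every Levy cycle to be degenerate, and removability then follows from the characterization already established.

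The step I expect to be the main obstacle is the maximality argument: confirming that the representative built from tubular neighbourhoods of the full ray classes $[x_i]$ is genuinely $<$-maximal, and that it is $<$-comparable to any other element of $\RL$ whose limit set lies in the same periodic ray class cycle. This requires a careful analysis of how $\mathcal{D}(\Lambda)$ varies isotopically relative to $P_F$ as one enlarges $X_i$ within the possibly topologically intricate ray class $[x_i]$, together with verifying that the no-loop-in-preimage condition is preserved when one passes from the subtrees $X_i$ to the full ray classes.
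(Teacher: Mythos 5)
First, a point of comparison: the paper offers no proof of Proposition~\ref{p:RLprime} at all --- it is imported from \cite{ShishTan}, with the bracketed citation as the only justification --- so there is no in-paper argument to measure your proposal against, and it has to be judged on its own terms. Your overall route is the natural one: use Proposition~\ref{p:limset} as the dictionary between Levy cycles and periodic cycles of ray classes, translate removability into the no-loop condition on iterated preimages via Riemann--Hurwitz, and deduce the final assertion from the Moore-type property of $\approx$ (a loop inside a ray class would separate $\Sigma_{f,g}$ since a ray class has empty interior, so Moore-type excludes all loops, every Levy cycle is then degenerate by Proposition~\ref{p:limset}, and removability follows). That last part of your argument is essentially complete.

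The genuine gap is the one you flag yourself, and it is worse than you suggest. The partial order on $\RL$ compares isotopy classes: $\lambda_1 < \lambda_2$ requires representatives with $\mathcal{D}(\Lambda_1) \subset \mathcal{D}(\Lambda_2)$. Enlarging $X_i$ inside $[x_i]$ does \emph{not} in general move you strictly up in this order: if the added rays and landing points carry no postcritical points, the two tubular neighbourhoods are isotopic rel $P_F$ and you have not changed the element of $\RL$ at all. So maximality does not ``force $X_i = [x_i]$''; at best it forces the tubular neighbourhood of $X_i$ to be isotopic rel $P_F$ to one of $[x_i]$, which is what the proposition actually asserts and which is better argued directly (the full class $[x_i]$ still maps homeomorphically by Proposition~\ref{p:limset}, and a neighbourhood of it is still a disk because the no-loop hypothesis is imposed on $[x_i]$ itself, not merely on $X_i$). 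More seriously, you only test maximality against enlargements supported on the \emph{same} cycle of ray classes. To conclude that these curves are maximal in $\RL$ you must also rule out a removable Levy cycle $\Lambda'$ with $\mathcal{D}(\Lambda') \supsetneq \mathcal{D}(\Lambda)$ whose limit set is a \emph{different} periodic cycle of ray classes; nested configurations of disks bounded by curves around distinct periodic ray classes do occur (compare the three nested loops in Example~\ref{ex:4over7conj}), and nothing in your sketch excludes them. That exclusion, together with a justification that the zero-intersection condition in the definition of $\RL'$ is genuinely automatic here (it holds because distinct ray classes are disjoint compact sets, but this deserves a sentence, not an aside), is where the actual content of the Shishikura--Tan argument lies.
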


The above result allows us to define (essential) mateability of two polynomials $f$ and $g$ in $\Sone$. This adapts the definition of mateability given in \cite{ShishTan} (Compare \cite{ShishEssMate}, where this was called the degenerate mating), which uses the notion of being \emph{weakly equivalent} to a rational map. One can think of this weak equivalence as being obtained by shrinking (simultaneously) all removable Levy cycles of the branched covering $F = f \uplus g$ to points. If the original two polynomials were topologically mateable, then this new branched covering will no longer have any Levy cycles.

\begin{defn}
 We say that two postcritically finite polynomials $f,g \in \Sone$ are \emph{(essentially) mateable} if all irreducible obstructions for $F = f \uplus g$ are removable Levy cycles.
\end{defn}

Since the matings of postcritically finite polynomials in $\Sone$ have hyperbolic orbifolds, the above definition is equivalent to the topological mating of $f$ and $g$ being Thurston equivalent to a rational map.


\section{Combinatorics of Limbs}

In this section, we collect together some combinatorial properties of the parameter limbs defined in Section~\ref{ss:paramlimbs}. 

\begin{lem}\label{l:fixedraylanding} Let $t_0 \neq 0$ be periodic under $m_2$ and suppose $F_a$ belongs to a limb with internal argument $t_0$.
\begin{enumerate}
\item  $F_a \in \Limb^+_{t_0}$, if and only if $R_a(0)$ lands on $\partial U_a$ and $R_a(1/2)$ lands on the critical limb of $F_a$.
\item $F_a \in \Limb^-_{t_0}$ if and only if the ray of angle $R_a(1/2)$ lands on $\partial U_a$ and $R_a(0)$ lands on the critical limb of $F_a$.
\end{enumerate}
\end{lem}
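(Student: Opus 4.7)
The plan is to locate the fixed points of $F_a$ in the dynamical plane, use the rotational symmetry of $\Sone$ to reduce the two statements to a single one, and then apply Corollary~\ref{cor:critlimbrays} to pin down the landing pattern of the fixed rays.

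First I would identify the three fixed points of $F_a$: the critical fixed point $a \in U_a$; the unique fixed point $\beta(0)$ on $\partial U_a$ (since the B\"ottcher coordinate on $U_a$ conjugates $F_a|_{U_a}$ to $z \mapsto z^2$, whose only fixed angle is $0$); and a third fixed point $\alpha_f \in J(F_a)$. That $\alpha_f$ lies in the critical limb $K_{t_0}$ follows from Theorem~\ref{t:dynlimbs}: no non-critical limb $K_t$ can contain a fixed point, because $F_a$ sends $K_t$ homeomorphically onto $K_{2t}$ and the limbs are pairwise disjoint, so a fixed point in $K_t$ would force $t = 0 \notin \Lambda$ (using $t_0 \ne 0$). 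The rays $R_a(0)$ and $R_a(1/2)$, being the only $m_3$-fixed rays, land at fixed points of $F_a$ in $J(F_a)$, and hence at $\beta(0)$ or $\alpha_f$. A short combinatorial argument (using the ray portrait at a repelling fixed point and stability of landings within the limb) confirms that the two rays land at distinct fixed points, so one lands at $\beta(0)$ and the other at $\alpha_f$.

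Next I would invoke the $180^{\circ}$ rotational symmetry $a \mapsto -a$ of $\Sone$, which swaps $\Limb^+_{t_0}$ with $\Limb^-_{t_0}$ and is implemented on the dynamical plane by the conjugacy $z \mapsto -z$ between $F_a$ and $F_{-a}$; this sends the external ray at angle $t$ of $F_a$ to the external ray at angle $t + 1/2$ of $F_{-a}$, and in particular interchanges $R_a(0)$ with $R_{-a}(1/2)$. Hence statements (i) and (ii) are equivalent under this symmetry, so it suffices to prove (i).

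To prove (i), I would use Corollary~\ref{cor:critlimbrays}: for $F_a \in \Limb^+_{t_0}$, the rays $R_a(\theta + 1/3)$ and $R_a(\theta' - 1/3)$ land at $\beta(t_0)$, the root of the critical limb. Together with $\beta(t_0)$ these rays form a Jordan curve in $\CC$ that separates the critical limb $K_{t_0}$ (containing $\alpha_f$) from $\overline{U}_a$ and the non-critical limbs (which contain $\beta(0)$). Each fixed ray $R_a(0)$ or $R_a(1/2)$ lies on one side of this curve, determined by the cyclic order of the four angles $0,\, 1/2,\, \theta + 1/3,\, \theta' - 1/3$ on $\T$, and lands accordingly at $\beta(0)$ or at $\alpha_f$. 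The main obstacle will be the sign-specific step: verifying that for $F_a \in \Limb^+_{t_0}$ the angle $0$ lies in the basin-side arc (so that $R_a(0)$ lands at $\beta(0) \in \partial U_a$) while $1/2$ lies in the critical-limb-side arc. The $180^{\circ}$ symmetry already forces the parameter-ray pairs $(\theta, \theta')$ for $\Limb^+_{t_0}$ and $\Limb^-_{t_0}$ to differ by $1/2$, so the two patterns are automatically opposite; identifying which one corresponds to $\Re(a) > 0$ can be handled either by an explicit combinatorial calculation following \cite{CP1}, or by a continuity argument tracking the landing of $R_a(0)$ as $a$ moves from $\Hy_0$ (where at the base map $F_0 = z^3$ the ray $R_a(0)$ lands at the positive real fixed point $1$ on $\partial U_a$) across $\partial \Hy_0$ into $\Limb^+_{t_0}$.
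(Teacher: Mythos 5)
Your overall strategy coincides with the paper's: identify the unique fixed point on $\partial U_a$ as the landing point of one of the two $m_3$-fixed rays, use Corollary~\ref{cor:critlimbrays} to bound the wedge containing the critical limb by the rays $R_a(\theta+1/3)$ and $R_a(\theta'-1/3)$, and then decide which of the angles $0$ and $1/2$ falls inside that wedge. The preliminary bookkeeping (the third fixed point lies in the critical limb, the $180^\circ$ symmetry swaps the two statements, the fixed rays land at fixed points) is correct and consistent with what the paper does implicitly.

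The gap is that you stop exactly at the step that carries the entire content of the lemma: determining that for $\Re(a)>0$ it is $1/2$, and not $0$, that lies in the critical-limb arc. You label this ``the main obstacle'' and offer two unexecuted options, but everything else in your argument is symmetric under $a \mapsto -a$, so without some concrete asymmetric input the sign cannot be pinned down. The paper settles it in one line: for $t_0 \neq 0$ and $F_a \in \Limb^+_{t_0}$, the parameter angles $\theta < \theta'$ of the rays landing at the root of the limb satisfy $\{\theta,\theta'\} \subset (5/6,1/6)$, whence $\theta+1/3 \in (1/6,1/2)$ and $\theta'-1/3 \in (1/2,5/6)$, so $1/2$ lies strictly between them and $0$ does not. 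I would also caution that your proposed continuity argument is less immediate than stated: for $a \in \Hy_0$ the restriction $F_a|_{U_a}$ has degree $3$, the Julia set is the Jordan curve $\partial U_a$, and \emph{both} $R_a(0)$ and $R_a(1/2)$ land on $\partial U_a$ at the two repelling fixed points; what must actually be tracked is which of these two fixed points is separated from $U_a$ by the wedge that opens up at $\beta(t_0)$ as $a$ crosses into the limb, and that is the same combinatorial question in disguise. The clean fix is to quote the location of the parameter angles for the $+$ limbs from \cite{CP1}, as the paper does.
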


\begin{proof}
Notice that for any $F_a \in \mathcal{C}(\Sone)$ there exists a fixed point for $F_a$ on the boundary of $U_a$, and this point must be the landing point of either $R_a(0)$ or $R_a(1/2)$. Thus to prove the first claim, it suffices to show that $R_a(1/2)$ lands in the critical limb of $F_a$ whenever $F_a \in \Limb_{t_0}^+$.  In this case, since $t_0 \neq 0$, if $\theta < \theta'$ and $\pray(\theta)$ lands with $\pray(\theta')$  at the root of the parameter limb $\Limb^+_{t_0}$ then  $\{ \theta, \theta' \} \subset (5/6,1/6) \subset \T$. By Corollary~\ref{cor:critlimbrays}, the $R_a(\theta+1/3)$ and $R_a(\theta'-1/3)$ land on the root of the critical limb of $F_a$. But $\theta + 1/3 \in (1/6,1/2) \subset \T$ and $\theta'-1/3 \in (1/2,5/6) \subset \T$, and so $\theta + 1/3 < 1/2 < \theta' - 1/3$. Hence $R_a(1/2)$ must land in the critical limb of $F_a$. The proof for the second case is similar.
\end{proof}

In the special case that $t_0 = 0$, the external rays of angle $0$ and $1/2$ both land on  $\partial U_a$ at the root point of the critical limb of $F_a$. Together with their landing point, these rays separate the critical limb from the rest of Julia set.

Recall that for $F_a \in \Sone$, we denote by $\beta(t) = \Bot(e^{2 \pi i t})$ the point on $\partial U_a$ which is the landing point of the internal ray of angle $t$.

\begin{defn}\label{d:alphacycle}
Let $t_0 \in \Q / \Z$ be periodic of period $p$ under $m_2$ and suppose $F_a$ belongs to the limb $\Limb_{t_0}^{\pm}$. Then the points  $\beta(t_0),\, \beta(2t_0),\,  \dotsc, \,\beta(2^{p-1}t_0)$ form a period $p$ cycle under $F_a$ on the boundary of $U_a$, and these points are each the landing point of two external rays. This periodic cycle will be called the $\alpha$-\emph{periodic cycle} of $F$. The rotation number of the $\alpha$-periodic cycle is defined to be the rotation number of $t_0$ under angle doubling.
\end{defn}

The terminology is chosen to draw a parallel with the notion of the $\alpha$-periodic point of quadratic polynomials. By Proposition \ref{prop:limblanding}, this periodic orbit persists within a limb. That is, if $\Limb_{t_0}^\pm$ is a limb with periodic internal argument, then for all maps in this limb, the internal and external rays landing on the $\alpha$-periodic cycle have the same angles.  See Figure~\ref{f:alphacycle} for examples of two maps in the same limb. Thus when we refer to the rotation number of a limb $\Limb^\pm_{t_0}$, we mean the rotation number of the associated $\alpha$-periodic orbit. By the above discussion, this is well-defined. An equivalent definition is to call the $\alpha$-periodic cycle the unique periodic cycle of biaccessible points on the boundary of $U_a$.

\begin{figure}
\centering
\begin{subfigure}{0.5\textwidth}
\centering
\includegraphics[width=0.7\linewidth]{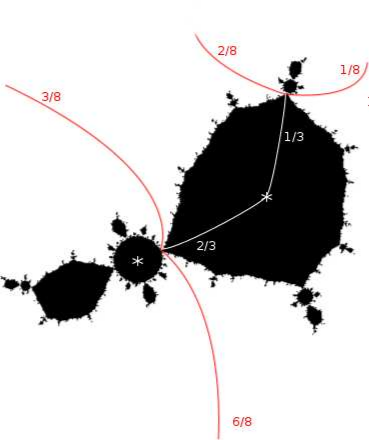}
\label{f:firstmap}
\end{subfigure}%
\begin{subfigure}{0.5\textwidth}
\centering
\includegraphics[width=0.9\linewidth]{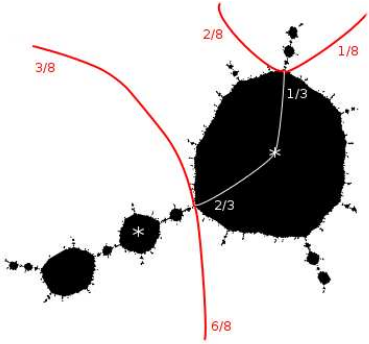}
\label{f:secondmap}
\end{subfigure}%
\caption{Two maps belonging to the limb $\Limb_{2/3}^+$ in $\Sone$. The rays (drawn by hand) landing on the $\alpha$-periodic cycle for both maps are shown. Note the angles of these rays are the same for both maps. The critical points are indicated by $\ast$. The figures show that $\Theta(\Limb_{2/3}^+) = \{ \frac18,\frac28,\frac38,\frac68\}$.} 
\label{f:alphacycle}
\end{figure}

We remark that we do not define the notion of an $\alpha$-periodic cycle when the internal argument of the limb is strictly preperiodic. Let $t_0$ be $m_2$-periodic and consider the limb $\Limb = \Limb_{t_0}^\pm$. Denote by $\Theta(\Limb)$ the set of all arguments of external rays landing on the $\alpha$-periodic cycle of the maps in $\Limb$. 

\begin{prop}\label{p:Theta_properties}
Let $t_0$ be periodic under $m_2$.
\begin{enumerate}
\item If the orbit $t_0$ under $m_2$ has rotation number $\rho$, then $\Theta(\Limb_{t_0}^{\pm})$ is a rotation set under $m_3$, with rotation number $\rho$. Furthermore, $\Theta(\Limb_{t_0}^{\pm})$ has exactly two major gaps.
\item If the orbit of $t_0$ under $m_2$ is not a rotation set, then $\Theta(\Limb_{t_0}^{\pm})$ is not a rotation set and has exactly one major gap.
\end{enumerate}
Furthermore, if $k$ is the period of $t_0$ under $m_2$, then for each $0 \leq i \leq k-1$, $\Theta(\Limb_{t_0}^{\pm})$ has a gap of length $\frac{3^i}{3^k-1}$. In particular, the two angles $\theta$ and $\theta'$ for which the corresponding rays land at the root of the critical limb of $F_a$ bound a major gap of length $\frac{3^{k-1}}{3^k-1}$.
\end{prop}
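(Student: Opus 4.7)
The plan is to analyze the $2q$ gaps of $\T\setminus\Theta$, where $q$ denotes the period of $t_0$ under $m_2$. Since each of the $q$ $\alpha$-cycle points $\beta(2^i t_0)$ is biaccessible, joining its two landing angles by a chord in $\overline{\D}$ produces $q$ pairwise non-crossing chords that cut the disk into $q+1$ regions. Correspondingly, the gaps of $\T\setminus\Theta$ split into $q$ \emph{limb gaps} $L_i$ (one per cycle point $\beta(2^i t_0)$, containing all external angles of rays landing in the dynamical limb $K_{2^i t_0}$) and $q$ \emph{$U_a$-side gaps} $U_j$, and these alternate around $\T$.

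The ``furthermore'' clause rests on two length computations. First, by Corollary~\ref{cor:critlimbrays} the critical limb gap $L_0$ is bounded by the angles $\theta+1/3$ and $\theta'-1/3$ with $\theta'-\theta=1/(3(3^q-1))$, and a direct computation of the arc on the non-$U_a$ side yields length $3^{q-1}/(3^q-1)>1/3$, which is the major gap promised in the statement. Second, for $i=1,\ldots,q-1$, Theorem~\ref{t:dynlimbs}(iv) says $F_a$ sends $K_{2^it_0}$ homeomorphically onto $K_{2^{i+1}t_0}$, so $m_3$ sends $L_i$ homeomorphically onto $L_{i+1}$ (indices mod $q$). Combined with the closing relation $|L_0|=3|L_{q-1}|$, a short induction gives $|L_i|=3^{i-1}/(3^q-1)$ for $i\ge 1$, so the limb gap lengths realize the full list $\{3^i/(3^q-1):0\le i\le q-1\}$. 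These sum to $1/2$, so the $U_j$ collectively occupy the remaining half of $\T$; in particular at most one $U_j$ can be major and exactly one limb gap (namely $L_0$) is major.

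For the dichotomy in parts (i) and (ii) I would prove the following equivalence: $\Theta$ is a rotation set under $m_3$ if and only if the $m_2$-orbit of $t_0$ is a rotation set, and in that case the two rotation numbers agree. Let $\sigma$ be the permutation of the cyclic positions of the $q$ $\alpha$-cycle points induced by $s\mapsto 2s$ on the internal-angle circle. Because $F_a$ is holomorphic, hence locally orientation preserving at each $\beta(s_k)$, the two rays $\theta_k^-,\theta_k^+$ landing there satisfy $m_3(\theta_k^\pm)=\theta_{\sigma(k)}^\pm$ without swapping labels. Since the two rays at $\beta(s_k)$ are adjacent in the cyclic order of $\Theta$ (separated only by the limb gap $L_k$), the restriction $m_3|_\Theta$ extends to a degree $1$ monotone map of $\T$ precisely when $\sigma$ respects cyclic order on the $q$ cycle points, which is exactly the condition that the $m_2$-orbit of $t_0$ is a rotation set.

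Given the equivalence, part (ii) is immediate: if the $m_2$-orbit is not a rotation set, then neither is $\Theta$, so by Proposition~\ref{prop:rotsets} the set $\T\setminus\Theta$ cannot accommodate two disjoint intervals of length $\ge 1/3$, leaving $L_0$ as the only major gap. For (i), the equivalence first supplies that $\Theta$ is a rotation set of rotation number $\rho$, and then Proposition~\ref{prop:rotsets} forces a second major gap, which by the length bounds above must be the unique major $U_j$. The principal obstacle is the cyclic-order equivalence: one must argue carefully both that $F_a$ pairs the rays at each cycle point in the orientation-preserving fashion asserted (giving $\theta_k^\pm\mapsto\theta_{\sigma(k)}^\pm$ rather than the crossed pairing) and that these pairs are adjacent in $\Theta$. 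The exceptional case $t_0=0$ is handled separately by direct inspection, with $q=1$ and $\Theta=\{0,1/2\}$ giving two major gaps each of length $1/2$.
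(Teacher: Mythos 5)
Your argument is correct and takes essentially the route the paper's (very terse) proof indicates: non-crossing of rays gives the alternating limb-gap/$U_a$-side-gap structure, forward images of the sectors bounded by the rays of Proposition~\ref{prop:limblanding} give the gap lengths $\frac{3^i}{3^k-1}$, and Proposition~\ref{prop:rotsets} converts the count of major gaps into the rotation-set dichotomy. The paper explicitly leaves these details to the reader, so your write-up simply supplies them, including the cyclic-order equivalence with the $m_2$-orbit of $t_0$, which the paper subsumes under the remark that external rays cannot cross.
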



\begin{proof}
The proof follows easily from the fact that (external) rays cannot cross and Proposition~\ref{prop:rotsets}. For part (ii), the angles of the rays landing at the root of the critical limb bound the unique major gap. The existence of the gaps of the prescribed lengths follows from considering the forward images of the sectors bounded by the rays of angle $\theta$ and $\theta'$ in Proposition~\ref{prop:limblanding}. Details are left to the reader.
\end{proof}

We see that if $\Theta(\Limb)$ is a rotation set, then it is made up of the union of two distinct cycles under $m_3$, each with the same rotation number as $\Theta(\Limb)$. By Proposition~\ref{p:signaturetheorem}, such a set is completely described by its rotation number $p/q$ and $s_1(X)$, the number of elements of $X$ in $[0,1/2)$. Furthermore, since $s_1$ must be odd, we get the following.

\begin{lem}[Compare Corollary 3.15, \cite{SaeedBook}]\label{l:countinglimbs}
 Given a rotation number $\rho = p/q$, there exists exactly $q$ rotation sets with rotation number $p/q$ under $m_3$ which are the union of two cycles.
\end{lem}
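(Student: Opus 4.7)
My plan is to obtain the count as a direct corollary of Goldberg's signature theorem (Proposition~\ref{p:signaturetheorem}). That theorem sets up a bijection between rotation sets under $m_3$ with a given rotation number $p/q$ that are the union of two cycles and valid signatures $(s_1,s_2)$; the count then reduces to counting valid values of $s_1$.

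First I would fix a reduced fraction $p/q$ with $0 \leq p/q < 1$. Any rotation set $X$ under $m_3$ with rotation number $p/q$ that is the union of two distinct cycles has exactly $s_2 = 2q$ elements (each cycle contributes $q$ points, because the rotation number is $p/q$ in lowest terms). Thus the second coordinate of the signature is forced.

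Next, Proposition~\ref{p:signaturetheorem} says that for each integer $s_1$ with $0 \leq s_1 \leq 2q$ and $s_1$ odd, there is exactly one rotation set with signature $(s_1, 2q)$, and no others exist. So the problem reduces to counting odd integers in the interval $[0,2q]$. The odd integers in this range are precisely $1,3,5,\dotsc,2q-1$, of which there are $q$.

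The only point worth checking carefully is that two distinct odd values of $s_1$ really do give two distinct rotation sets (and not merely two descriptions of the same set): but this is immediate, since the signature is an invariant of the rotation set and distinct signatures yield distinct sets. No further step is needed; there is no real obstacle here, only bookkeeping from Goldberg's theorem.
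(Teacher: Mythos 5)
Your proof is correct and follows essentially the same route as the paper: both reduce the count via Goldberg's signature theorem (Proposition~\ref{p:signaturetheorem}) to counting the admissible odd values $s_1 = 1, 3, \dotsc, 2q-1$, of which there are exactly $q$, each corresponding to a unique rotation set. Your version simply spells out the bookkeeping (why $s_2 = 2q$ is forced and why distinct signatures give distinct sets) that the paper leaves implicit.
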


\begin{proof}
The admissible values of $s_1$ are $1, 3, \dotsc 2q-1$, and each is associated to a unique rotation set.
\end{proof}

We now show that each of these $q$ rotation sets is realised as $\Theta(\Limb)$ for some limb $\Limb \subseteq \Sone$. 

\begin{cor}\label{cor:numofcomplimbs}
 Suppose $A$ is a rotation set under $m_3$ which is the union of two cycles and $\rho(A) = p/q$. Then there exists a unique limb $\Limb^+_{t_0}$ of $\Sone$ such that $A = \Theta(\Limb^+_{t_0})$  and a unique limb $\Limb_{t_0'}^-$ such that $A = \Theta(\Limb^-_{t_0'})$.
\end{cor}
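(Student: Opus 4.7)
The plan is to run a counting argument that reduces the problem to injectivity of the map $\Limb^+_{t_0} \mapsto \Theta(\Limb^+_{t_0})$. Under $m_2$ it is classical that for each reduced fraction $p/q$ there is exactly one periodic orbit which is a rotation set, and this orbit has period $q$ and rotation number $p/q$. Consequently there are exactly $q$ choices of periodic angle $t_0$ whose $m_2$-orbit has rotation number $p/q$, yielding $q$ distinct limbs of the form $\Limb^+_{t_0}$. By Proposition~\ref{p:Theta_properties}(i) the assignment $\Limb^+_{t_0} \mapsto \Theta(\Limb^+_{t_0})$ takes values in the collection of $m_3$-rotation sets with rotation number $p/q$ that are unions of two cycles, and by Lemma~\ref{l:countinglimbs} this target collection also has exactly $q$ elements. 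So it suffices to prove injectivity: a map between two sets of cardinality $q$ is then automatically a bijection, giving both existence and uniqueness of $t_0$ with $A = \Theta(\Limb^+_{t_0})$.

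For injectivity, I would exploit the distinguished major gap of $\Theta(\Limb^+_{t_0})$. Proposition~\ref{p:Theta_properties} singles out a major gap of length $\frac{3^{q-1}}{3^q-1}$, which is strictly the largest among the gaps of lengths $\frac{3^i}{3^q-1}$ for $0 \leq i \leq q-1$, so it is intrinsically recognisable from the set $\Theta$ alone. By Corollary~\ref{cor:critlimbrays} its endpoints are $\theta+1/3$ and $\theta'-1/3$, where $\theta = i/(3(3^q-1))$ and $\theta' = (i+1)/(3(3^q-1))$ are the two parameter-ray angles landing at the root of $\Limb^+_{t_0}$ (Proposition~\ref{prop:limblanding}). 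Reading these endpoints off $\Theta$ therefore recovers $\theta$ and $\theta'$, and hence the root of $\Limb^+_{t_0}$ in parameter space; since distinct limbs on the $+$ side have distinct roots on $\partial \Hy_0$, the equality $\Theta(\Limb^+_{t_0}) = \Theta(\Limb^+_{t_0'})$ forces $t_0 = t_0'$.

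The corresponding statement for $\Limb^-$ I would then handle either identically (the same counting and the same recovery of the root from the distinguished major gap apply verbatim) or as a consequence of the $+$ case via the $180^\circ$ rotational symmetry $a \mapsto -a$ of $\Sone$, which identifies $\Limb^+_{t_0}$ with $\Limb^-_{t_0}$ and transports rotation sets in a controlled way. The main subtlety I anticipate is confirming that the distinguished major gap is determined purely by $\Theta$ as a subset of $\T$ and not by extrinsic dynamical data; the length formulae from Proposition~\ref{p:Theta_properties} do this work by pinning down one gap as canonically the largest. The degenerate case $p/q = 0$ (where $q=1$ and $A = \{0,1/2\}$) reduces to $t_0 = 0$ and can be handled by direct inspection in the spirit of the remark preceding Lemma~\ref{l:fixedraylanding}.
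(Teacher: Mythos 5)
Your counting skeleton is exactly the paper's: there are $q$ limbs $\Limb^+_{t_0}$ with the given rotation number, Lemma~\ref{l:countinglimbs} gives $q$ candidate rotation sets, and existence plus uniqueness both follow once the assignment $t_0 \mapsto \Theta(\Limb^+_{t_0})$ is shown to be injective. The problem is in your injectivity step. You propose to recognise the critical-limb gap intrinsically as ``strictly the largest'' gap of $\Theta$, citing the lengths $\frac{3^i}{3^q-1}$ from Proposition~\ref{p:Theta_properties}. But that proposition only asserts the \emph{existence} of $q$ gaps with those lengths; $\Theta$ has $2q$ points and hence $2q$ gaps, and the remaining $q$ gaps are not controlled by that list. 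In fact the largest gap is typically not unique: for $\Theta(\Limb_{2/3}^+) = \{1/8,2/8,3/8,6/8\}$ (the paper's own example in Figure~\ref{f:alphacycle}) the gaps have lengths $1/8,1/8,3/8,3/8$, so there are two major gaps of the maximal length $3/8 = \frac{3^{q-1}}{3^q-1}$, and your recovery of $\theta,\theta'$ (hence of the root) from ``the largest gap'' is ambiguous. The same happens for $\Limb_{4/7}^+$, where both major gaps have length $9/26$. So as written the injectivity argument does not go through.

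The gap is fixable, and two repairs are available. The paper's route is to argue in the forward direction: distinct $t_0 \neq s_0$ in the same $m_2$-rotation set have distinct parameter rays landing at their roots, hence by Corollary~\ref{cor:critlimbrays} distinct critical-limb rays, hence distinct major gaps, and then Proposition~\ref{prop:gapthm} (a rotation set is determined by its major gaps) forces $\Theta(\Limb_{t_0}^+) \neq \Theta(\Limb_{s_0}^+)$. Alternatively, if you want to keep your ``read the root off $\Theta$'' strategy, the correct intrinsic marker is not length but position: by Lemma~\ref{l:fixedraylanding}, for a $+$ limb the ray $R_a(1/2)$ lands in the critical limb while $R_a(0)$ does not, so the critical-limb gap is the unique major gap of $\Theta$ containing the angle $1/2$ (and for a $-$ limb, the one containing $0$). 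That gap is determined by $\Theta$ together with the sign, and from its endpoints $\theta+1/3$ and $\theta'-1/3$ you do recover the root as you intended. Your treatment of the $-$ case and of $p/q=0$ is fine once the injectivity step is repaired.
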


We emphasise that $\Theta(\Limb)$ only consists of the angles of the external rays landing on the $\alpha$-periodic cycle for maps in $\Limb$. As noted above, the set $\Theta(\Limb)$ is the union of two distinct cycles $C_1$ and $C_2$ under $m_3$. If $F_a \in \Limb$, then each member of the $\alpha$-periodic cycle of $\Limb$ is the landing point of two external rays $R_a(\theta_1)$ and $R_a(\theta_2)$, where $\theta_1 \in C_1$ and $\theta_2 \in C_2$. However, if $A = \Theta(\Limb^+_{t_0}) = \Theta(\Limb^-_{t_0'})$ for some $t_0,t_0' \neq 0$, then the way the angles in $C_1$ and $C_2$ are paired (in the sense that the associated external rays have a common landing point) must be different. Indeed, by Lemma~\ref{l:fixedraylanding}, the ray $R_a(1/2)$ lands on the critical limb if $F_a \in \Limb_{t_0'}^+$. Thus for $F_a \in \Limb_{t_0'}^+$, if $\theta_1 = \max(A \cap (0,1/2))$ and $\theta_2 = \min(A \cap (1/2,1))$, then $R_a(\theta_1)$ and $R_a(\theta_2)$ land together. However, for $F_a \in \Limb^-_{t_0'}$, these rays cannot land together, since then they and their common landing point would separate $R_a(1/2)$ from $\partial U_a$, which contradicts Lemma~\ref{l:fixedraylanding}.

\begin{proof}[Proof of Corollary~\ref{cor:numofcomplimbs}]
 Since rotation cycles under $m_2$ are uniquely determined by their rotation number, we see that there are exactly $q$ limbs of the form $\Limb^+_{t_0}$ for which the internal argument $t_0$ belongs to the unique rotation set of $m_2$ that has rotation number $p/q$. By Lemma~\ref{l:countinglimbs}, there are $q$ candidates for $\Theta(\Limb)$ for such a limb. So it is sufficient to prove that if $t_0 \neq s_0$, then $\Theta(\Limb_{t_0}^+) \neq \Theta(\Limb_{s_0}^+)$. 
 
Assume $t_0$ belongs to a $m_2$-rotation set $X$. If $s_0$ does not belong to an $m_2$-rotation set, or belongs to a rotation set with a different rotation number than $X$, then by Proposition~\ref{p:Theta_properties} we must have $\Theta(\Limb_{t_0}^+) \neq \Theta(\Limb_{s_0}^+)$. So suppose $t_0,s_0 \in X$ but $t_0 \neq s_0$. Then the parameter rays landing at the root of $\Limb_{t_0}^+$ must be different than those landing at the root of $\Limb_{s_0}^+$. Hence, by Corollary~\ref{cor:critlimbrays}, the angles of the rays landing on the root of the critical limb of maps in $\Limb_{t_0}^+$ must be different from the angles of the rays landing on the root of the critical limb of maps in $\Limb_{s_0}^+$. But these angles bound a major gap for the rotation sets $\Theta(\Limb_{t_0}^+)$ and $\Theta(\Limb_{s_0}^+)$ respectively. Since the two rotation sets have different major gaps, it follows from Proposition~\ref{prop:gapthm} that $\Theta(\Limb_{t_0}^+) \neq \Theta(\Limb_{s_0}^+)$.

The proof for limbs of the form $\Limb_{t_0}^-$ is similar.
\end{proof}



Corollary~\ref{cor:numofcomplimbs} indicates that the dynamics of the limbs in $\Sone$ which have a rotation number are ``combinatorially full'', in the following sense. Given a rotation set $A$ under $m_3$, which consists of two periodic cycles, then there exists a limb $\Limb \subset \Sone$ for which $A = \Theta(\Limb)$. In fact, if one considers the moduli space $\Sone / \mathcal{I}$, where $\mathcal{I}$ is the canonical involution $z \mapsto -z$ (so that $\Sone / \mathcal{I}$ is made up of affine conjugacy classes of cubic polynomials with a fixed critical point), then there is precisely one limb corresponding to each rotation set $A$ of the form given above. This is analogous to the case for the Mandelbrot set; given a rotation set $A$ under $m_2$, there exists a limb in the Mandelbrot set for which the external rays landing on the associated $\alpha$-fixed point is the set $A$. However, it should be noted that, in contrast to the Mandelbrot set, there are limbs of $\Sone$ which don't have an associated rotation number. It would be interesting to study the combinatorics of the sets $\Theta(\Limb)$ for these limbs. Petersen and Zakeri have studied the combinatorics of periodic orbits under $m_d$ in \cite{Combtypes}.

We will use the notation $-\Theta(\Limb)$ for the set $\{ - \theta \mid \theta \in \Theta(\Limb) \}$.

\begin{defn}
Let $\Limb$ and $\Limb'$ be limbs in $\Sone$. We will say that $\Limb$ and $\Limb'$ are $\alpha$-\emph{symmetric} if $ \Theta(\Limb) = -\Theta(\Limb')$.
\end{defn}

Note that the above definition only makes sense for the case where $\Limb$ and $\Limb'$ have periodic internal argument. The importance of the above definition comes from the following result.


\begin{prop}\label{p:limbcondition}
 Let $\Limb_1$ and $\Limb_2$ be $\alpha$-symmetric limbs of $\Sone$. Then if $F_a$, $F_b$ are postcritically finite polynomials such that $F_a \in \Limb_1$ and $F_b \in \Limb_2$, then $F_a$ and $F_b$ are not mateable. 
\end{prop}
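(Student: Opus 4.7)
The plan is to exhibit a periodic ray-equivalence class $[x]$ for the formal mating $F = F_a \uplus F_b$ that contains a closed topological loop, and then to invoke the converse direction of Proposition~\ref{p:limset} to produce a Levy cycle that is non-degenerate, hence non-removable, obstructing the mating.

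First I would translate the hypothesis into a pairing of rays. Let $A_a \subset \partial U_a$ and $A_b \subset \partial U_b$ be the $\alpha$-periodic cycles of $F_a$ and $F_b$; by definition the external rays of $F_a$ landing on $A_a$ are exactly those with angles in $\Theta(\Limb_1)$, each point of $A_a$ being the landing point of exactly two such rays, and likewise for $A_b$ and $\Theta(\Limb_2)$. The mating identifies $\infty \cdot e^{2\pi i \theta}\in\OC_{F_a}$ with $\infty \cdot e^{-2\pi i \theta}\in\OC_{F_b}$, so for each $\theta$ the closures $\overline{R_{F_a}(\theta)}$ and $\overline{R_{F_b}(-\theta)}$ meet at a single point at infinity in $\Sigma_{F_a,F_b}$. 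The hypothesis $\Theta(\Limb_1)=-\Theta(\Limb_2)$ says precisely that every external ray of $F_a$ landing on $A_a$ is matched, through this gluing, with an external ray of $F_b$ landing on $A_b$.

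Next, let $\Xi\subset\Sigma_{F_a,F_b}$ be the union of $A_a$, $A_b$, and all these matched pairs of closed rays (together with their common points at infinity). By the generating relations for $\approx$, the connected set $\Xi$ is contained in a single ray class $[x]$. Since $\Theta(\Limb_1)$ is $m_3$-invariant (Proposition~\ref{p:Theta_properties}) and $F$ acts on external rays by tripling angles, $\Xi$ is forward invariant under $F$, so $[x]$ is periodic. To locate a closed loop inside $[x]$, I form the bipartite graph $G$ on vertex set $A_a \sqcup A_b$ whose edges record the matched pairs of rays: an edge joins $p\in A_a$ to $q\in A_b$ whenever $R_{F_a}(\theta)$ lands at $p$ and $R_{F_b}(-\theta)$ lands at $q$ for some $\theta\in\Theta(\Limb_1)$. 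Every vertex of $G$ has degree exactly two, so $G$ is a disjoint union of even cycles, and any such combinatorial cycle realises as a simple closed loop inside $\Xi\subseteq[x]$.

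Finally, by the converse part of Proposition~\ref{p:limset}, each boundary component of a tubular neighbourhood of $[x]$ generates a Levy cycle $\Gamma$. The contrapositive of part~(1) of that proposition says that if some limit component $X_i$ is not a tree then $\Gamma$ is non-degenerate; since our $[x]$ contains a closed loop, this is precisely the case, so $\Gamma$ is non-degenerate and therefore non-removable. Thus $F_a\uplus F_b$ carries a non-removable obstruction and $F_a, F_b$ are not mateable. The only delicate point I anticipate is verifying that the combinatorial cycle in $G$ genuinely realises as an embedded closed curve in $\Sigma_{F_a,F_b}$, i.e.\ that no collapsing occurs at shared landing points; this reduces to observing that distinct external rays of $F_a$ (and of $F_b$) meet only at infinity and that $A_a\cap A_b=\emptyset$ in $\Sigma_{F_a,F_b}$, both of which are immediate from the construction of the formal mating.
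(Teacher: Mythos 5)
Your argument is correct and takes essentially the same approach as the paper: both consider the ray class containing the two $\alpha$-periodic cycles, observe that it forms a finite graph in which every vertex has degree two and hence contains a closed loop, and conclude that the mating carries a non-removable obstruction. The only cosmetic differences are that you invoke the converse of Proposition~\ref{p:limset} where the paper cites Proposition~\ref{p:RLprime}, and your bipartite-graph formulation (degree exactly two, union of even cycles) is slightly more explicit than the paper's.
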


\begin{proof}
 Consider the ray class $R$ containing the $\alpha$-periodic cycle of $F_a$. Since $\Theta(\Limb_1) = -\Theta(\Limb_2)$, $R$ is also contains the $\alpha$-periodic cycle of $F_b$, and these points are the only elements of $J(F_a) \cup J(F_b)$ belonging to $R$. The points of an $\alpha$-periodic cycle are biaccessible, so this ray class may be thought of a (not necessarily connected) finite graph where every vertex has (at least) two edges landing on it. Such a graph must contain a closed loop. Thus the ray equivalence relation separates the sphere, and so by Proposition \ref{p:RLprime}, there exists a non-removable Levy cycle for the mating.
\end{proof}

\begin{rem}
 The notion of $\alpha$-symmetry again has close parallels to the quadratic case. More precisely, given a limb $\Limb$ within the Mandelbrot set, one could define $\Theta(\Limb)$ to be the angles of the external rays landing on the $\alpha$-fixed point of maps in $\Limb$ (compare Definition~\ref{d:alphacycle} and the following discussion). Then the condition that $ \Theta(\Limb) = -\Theta(\Limb')$ is exactly the case when $\Limb$ and $\Limb'$ are conjugate limbs of the Mandelbrot set. Thus by the result of Rees, Shishikura and Tan \cite{Tan:quadmat}, the mating of two quadratic postcritically finite polynomials is obstructed if and only if they belong to $\alpha$-symmetric limbs; compare Conjecture~\ref{conj:Conjecture}.
\end{rem}

Let $\Limb$ be a limb with periodic internal argument. In light of Proposition~\ref{p:limbcondition}, our goal is to now find combinatorial criteria for a limb $\Limb'$ which guarantee that $\Limb$ and $\Limb'$ are $\alpha$-symmetric. Since $\alpha$-symmetry is only defined for limbs with periodic internal argument, we will have to deal with limbs with strictly preperiodic internal arguments in a different manner. However, we remark that Theorem~\ref{t:preperneccessity} answers the mateability question in this case.

Given any limb $\Limb \subseteq \Sone$, we will say that the conjugate limb $\overline{\Limb}$ is the set of maps $F_a$ such that $F_{\overline{a}} \in \Limb$. In terms of internal arguments, if $t_0 \neq 0$ then the conjugate limb to $\Limb_{t_0}^{\pm}$ is the limb $\Limb_{-t_0}^\pm$. For $t_0 = 0$, the limbs $\Limb_{0}^+$ and $\Limb_0^-$ are mutually conjugate. In all cases, if the parameter ray $\pray(\theta)$ lands on the root of $\Limb$, then $\pray(-\theta)$ lands on the root of $\overline{\Limb}$.

\begin{lem}\label{l:conjugateTheta}
 Let $\Limb$ be a limb of $\Sone$ with periodic internal angle. Then $\Limb$ and $\overline{\Limb}$ are $\alpha$-symmetric.
\end{lem}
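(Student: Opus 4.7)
The plan is to exploit the fact that the family $\{F_a\}$ is equivariant under complex conjugation and then transport the $\alpha$-cycle and its rays accordingly. Writing $c(z) = \overline{z}$, a direct calculation from the normal form $F_a(z) = z^3 - 3a^2 z + 2a^3 + a$ gives
\[
 c \circ F_a \circ c = F_{\overline{a}},
\]
so $c$ conjugates the dynamics of $F_a$ to that of $F_{\overline{a}}$. In particular it maps $U_a$ onto $U_{\overline{a}}$ and $K(F_a)$ onto $K(F_{\overline{a}})$.

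The first step is to see how this symmetry acts on the two Böttcher coordinates. Since $\Bot_{F_a}$ at infinity is uniquely characterised by being tangent to the identity, the map $z \mapsto \overline{\Bot_{F_a}(\overline{z})}$ satisfies the same defining conditions for $F_{\overline{a}}$, so it equals $\Bot_{F_{\overline{a}}}$. This gives $c(R_a(\theta)) = R_{\overline{a}}(-\theta)$. An analogous uniqueness argument for the interior Böttcher coordinate on $U_a$ (normalised by the fixed critical point and the functional equation $\Bot(F(z)) = \Bot(z)^2$) produces $\Bot_{\overline{a}}(z) = \overline{\Bot_a(\overline{z})}$, whence $c(\beta_a(t)) = \beta_{\overline{a}}(-t)$.

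Next I transport the $\alpha$-cycle. If $F_a \in \Limb$ has periodic internal angle $t_0$ of period $p$, its $\alpha$-periodic cycle is $\{\beta_a(2^i t_0) : 0 \le i < p\}$. Applying $c$ sends this bijectively to $\{\beta_{\overline{a}}(-2^i t_0)\}$, which is a periodic cycle of biaccessible points on $\partial U_{\overline{a}}$ with internal angles in the orbit of $-t_0$; by the characterisation recalled just before Figure~\ref{f:alphacycle}, this is exactly the $\alpha$-periodic cycle of $F_{\overline{a}}$. By the definition of $\overline{\Limb}$, the map $F_{\overline{a}}$ lies in $\overline{\Limb}$, and the identification of internal arguments is consistent with the discussion preceding the lemma.

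Finally, combine the two transport statements. If $\theta \in \Theta(\Limb)$ then $R_a(\theta)$ lands on some point $z$ of the $\alpha$-cycle of $F_a$; applying $c$, the ray $R_{\overline{a}}(-\theta)$ lands on $c(z)$, a point of the $\alpha$-cycle of $F_{\overline{a}} \in \overline{\Limb}$, so $-\theta \in \Theta(\overline{\Limb})$. This proves $-\Theta(\Limb) \subseteq \Theta(\overline{\Limb})$, and since conjugation is involutive the same argument applied with the roles reversed gives equality. The case $t_0=0$ needs no separate treatment: the above goes through verbatim with $\Limb_0^+$ and $\Limb_0^-$ mutually conjugate. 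The only real content is the equivariance of the two Böttcher coordinates under $c$; once that is in place, the rest is just pushing points through $c$.
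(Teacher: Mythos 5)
Your proof is correct, but it takes a genuinely different route from the paper. The paper works in parameter space: it starts from the fact that if $\pray(\theta)$, $\pray(\theta')$ land at the root of $\Limb$ then $\pray(-\theta)$, $\pray(-\theta')$ land at the root of $\overline{\Limb}$, applies Corollary~\ref{cor:critlimbrays} to locate the dynamical rays landing at the root of the critical limb of a map in $\overline{\Limb}$, observes that these bound a major gap of $\Theta(\overline{\Limb})$, and then recovers the whole set by pushing these two angles forward under $m_3$. You instead argue entirely at the dynamical level, using the anti-holomorphic symmetry $c\circ F_a\circ c = F_{\overline{a}}$ of the normal form together with the uniqueness normalisations of the two B\"ottcher coordinates to get $c(R_a(\theta)) = R_{\overline{a}}(-\theta)$ and $c(\beta_a(t)) = \beta_{\overline{a}}(-t)$, and then transport the $\alpha$-cycle and every ray landing on it in one stroke. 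Your argument is more self-contained --- it does not invoke Corollary~\ref{cor:critlimbrays} or the gap structure of $\Theta(\Limb)$, and it proves the full set equality directly rather than generating it from one major gap --- at the cost of having to verify the equivariance of both B\"ottcher coordinates (which you do correctly: tangency to the identity at infinity, and the functional equation $\Bot(F(z))=\Bot(z)^2$ with no residual root-of-unity ambiguity in local degree~$2$). Note also that the paper's unproved assertion that $\pray(\theta)\mapsto\pray(-\theta)$ under conjugation is itself really a consequence of the same symmetry you establish, so in a sense your proof supplies the mechanism the paper's proof takes as input. The identification of the image of the $\alpha$-cycle of $F_a$ with the $\alpha$-cycle of $F_{\overline{a}}$ is legitimate either via the internal angle $-t_0$ of the critical limb of $F_{\overline{a}}$ or via the uniqueness of the periodic cycle of biaccessible points on $\partial U_{\overline{a}}$, as you say.
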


\begin{proof}
 Suppose  $\pray(\theta)$ and $\pray(\theta')$ (where $\theta < \theta'$) land on the root of $\Limb$. Then $\pray(-\theta)$ and $\pray(-\theta')$  land on the root of $\overline{\Limb}$. Hence by Corollary~\ref{cor:critlimbrays}, if $F_b \in \overline{\Limb}$ then the rays $R_b(-(\theta'-1/3))$ and $R_b(-(\theta+1/3))$ land on the root of the critical limb of $F_b$. But this means $-(\theta'-1/3)$ and $-(\theta+1/3)$ bound a major gap of $\Theta(\overline{\Limb})$ and in particular $\{ -(\theta'-1/3), -(\theta+ 1/3) \} \subseteq \Theta(\overline{\Limb})$. Taking forward iterates under $m_3$, it follows that $\Theta(\overline{\Limb}) = -\Theta(\Limb)$.
\end{proof}

In the special case where the internal argument is $0$, we have $\Theta(\Limb_0^+) = \Theta(\Limb_0^-) = \{0, 1/2 \}$.

\begin{prop}\label{p:conjugate}
Let $F_a$ and $F_b$ be postcritically finite polynomials that lie in conjugate limbs of $\Sone$. Then $F_a$ and $F_b$ are not mateable.
\end{prop}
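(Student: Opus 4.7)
The plan is to reduce the proposition to Proposition~\ref{p:limbcondition} by checking that pairs of conjugate limbs automatically satisfy the limb condition $\Theta(\Limb_2) = -\Theta(\Limb_1)$. Let $\Limb$ denote the limb containing $F_a$, so that $F_b \in \overline{\Limb}$.

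The main case is when the internal argument $t_0$ of $\Limb$ is periodic under $m_2$ and $t_0 \neq 0$. Here Lemma~\ref{l:conjugateTheta} applies directly and yields $\Theta(\overline{\Limb}) = -\Theta(\Limb)$, so the pair $\{\Limb,\overline{\Limb}\}$ satisfies the hypothesis of Proposition~\ref{p:limbcondition}. The latter proposition then immediately gives that $F_a$ and $F_b$ are not mateable.

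The case $t_0 = 0$ is handled within the same framework: $\Limb_0^+$ and $\Limb_0^-$ are mutually conjugate, and as observed just before the statement, $\Theta(\Limb_0^\pm) = \{0,1/2\}$, which is invariant under negation in $\T$. The argument of Proposition~\ref{p:limbcondition} still goes through, since the fixed point on $\partial U_a$ that is the common landing point of $R_a(0)$ and $R_a(1/2)$ (which is biaccessible precisely because $t_0=0$) is identified in the formal mating with its counterpart on $\partial U_b$ through two distinct external rays, producing a closed loop in that ray class.

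The main obstacle is the case where $t_0$ is strictly preperiodic under $m_2$. Then each of $\Limb$ and $\overline{\Limb}$ is a single map on $\partial \Hy_0$, there is no $\alpha$-periodic cycle, and Lemma~\ref{l:conjugateTheta} does not directly apply. My plan for this case is to argue more directly using the conjugation symmetry: since $F_b = F_{\bar a}$, complex conjugation induces an involution of the sphere $\Sigma_{F_a,F_b}$ which commutes with $F_a \uplus F_b$, interchanges $\OC_{F_a}$ with $\OC_{F_b}$, and respects the ray equivalence $\approx$ (sending $R_a(\theta)$ to $R_b(-\theta)$, which is exactly the mating identification at infinity). Using this symmetry one shows that the fixed point $\beta_a(0) \in \partial U_a$, together with its counterpart $\beta_b(0)$, lies in a ray class that is symmetric under this involution and contains the critical limb root points in a configuration forcing a closed loop; alternatively, by Corollary~\ref{c:postcritfinmatings} any obstruction must be a degenerate Levy cycle, and one verifies via Corollary~\ref{cor:twocritvals} that the ray class of $v_a$ also contains $v_b$, producing the required obstruction. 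The delicate point is to verify that the resulting ray class actually contains a loop (or at least forces non-disk preimages) and is not merely a tree with the two postcritical points at its ends — which is where the conjugation symmetry, rather than just the identification of critical values, plays the essential role.
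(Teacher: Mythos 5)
Your treatment of the periodic case, including $t_0=0$, matches the paper: Lemma~\ref{l:conjugateTheta} gives $\Theta(\overline{\Limb}) = -\Theta(\Limb)$ (and $\Theta(\Limb_0^\pm)=\{0,1/2\}$ is its own negative), so Proposition~\ref{p:limbcondition} applies. The problem is the strictly preperiodic case, where your argument has a genuine gap and the two concrete routes you sketch do not work. First, the ray class of the fixed point $\beta_a(0)$ cannot contain a loop: for strictly preperiodic $t_0$ the map $F_a$ has \emph{no} biaccessible periodic points, so $\beta_a(0)$ is the landing point of a single external ray and its ray class is merely an arc joining it to a point of $J(F_b)$. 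Second, the route through Corollary~\ref{cor:twocritvals} runs backwards: that corollary says a non-removable degenerate Levy cycle \emph{forces} the two critical values into a common ray class; it does not let you conclude an obstruction from $v_a \approx v_b$, and the paper explicitly warns (Example~\ref{ex:critptsidentified}) that the two critical values can share a ray class while the mating is unobstructed. You correctly flag that the delicate point is exhibiting an actual loop, but you leave that point open, so the preperiodic case is not proved.

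The missing step is short, and the conjugation symmetry you set up is the right tool to organise it. Since $F_a$ is the landing point of the single parameter ray $\pray(\theta)$, the dynamical ray $R_a(\theta)$ lands on the cocritical point $2a$; as $v_a$ is the landing point of exactly one ray (otherwise a forward iterate would be a biaccessible periodic point), exactly the two rays $R_a(\theta+1/3)$ and $R_a(\theta-1/3)$ land on the free critical point $-a$. The conjugate map $F_b$ is the landing point of $\pray(-\theta)$, so by the same reasoning $R_b(-(\theta+1/3))$ and $R_b(-(\theta-1/3))$ land on $-b$. The mating identification glues $R_a(s)$ to $R_b(-s)$, so \emph{both} rays emanating from $-a$ are glued to rays landing on $-b$: the ray class containing $-a$ and $-b$ has two distinct edges joining the same pair of vertices, hence contains a closed loop (compare Lemma~\ref{l:v_boppositev_a}(v) with $\eta=\theta+1/3$, $\eta'=\theta-1/3$), and the mating is obstructed as in the conclusion of the proof of Proposition~\ref{p:limbcondition}. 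This is exactly the paper's argument for the preperiodic case.
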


\begin{proof}
If $\Limb$ and $\Limb'$ are conjugate limbs with periodic internal argument then $\Theta(\Limb) = - \Theta(\Limb')$ and so by Proposition~\ref{p:limbcondition}, the mating of $F_a$ and $F_b$ must be obstructed.

Now suppose $t_0$ is strictly preperiodic and $F_a$ is the unique element of $\Limb = \Limb^\pm_{t_0}$. Then $F_a$ is the landing point of a parameter ray $\pray(\theta)$, and the dynamical ray $R_a(\theta)$ lands on the cocritical point $2a \in J(F_a)$. This implies that the rays $R_a(\theta+1/3)$ and $R_a(\theta-1/3)$ land on the free critical point $-a \in J(F_a)$. Similarly if $F_b$ is the unique map in $\overline{\Limb}$, then the rays $R_b(-(\theta+1/3))$ and $R_b(-(\theta-1/3))$ land on the free critical point $-b \in J(F_b)$. But then the ray class under the mating containing the two critical points $-a$ and $-b$ contains a loop, and so the mating is obstructed.
\end{proof}

We provide some examples showing the obstructions that occur in the previous proposition. We use laminations (using only the leaves corresponding to the obstruction we are considering) to illustrate the ray classes that form an obstruction in the matings.

\begin{ex}\label{ex:4over7conj}
 Consider a map $F_a$ in the parameter limb $\Limb_{4/7}^+$. The parameter rays landing at the root of $\Limb_{4/7}^+$ have angles $\frac{1}{78}$ and $\frac{2}{78}$, from which it follows that the rays $R_a(1/26)$ and $R_a(2/26)$ land together in the dynamical plane of $F_a$. For a map $F_b$ in the conjugate limb $\Limb_{3/7}^+$, the rays $R_b(24/26)$ and $R_b(25/26)$ land together. Hence we have the lamination for the mating as given in Figure~\ref{f:examplefig}.
 \begin{figure}[ht]
    \centering
\begin{subfigure}{0.48\textwidth}
\centering
\includegraphics[width=0.7\linewidth]{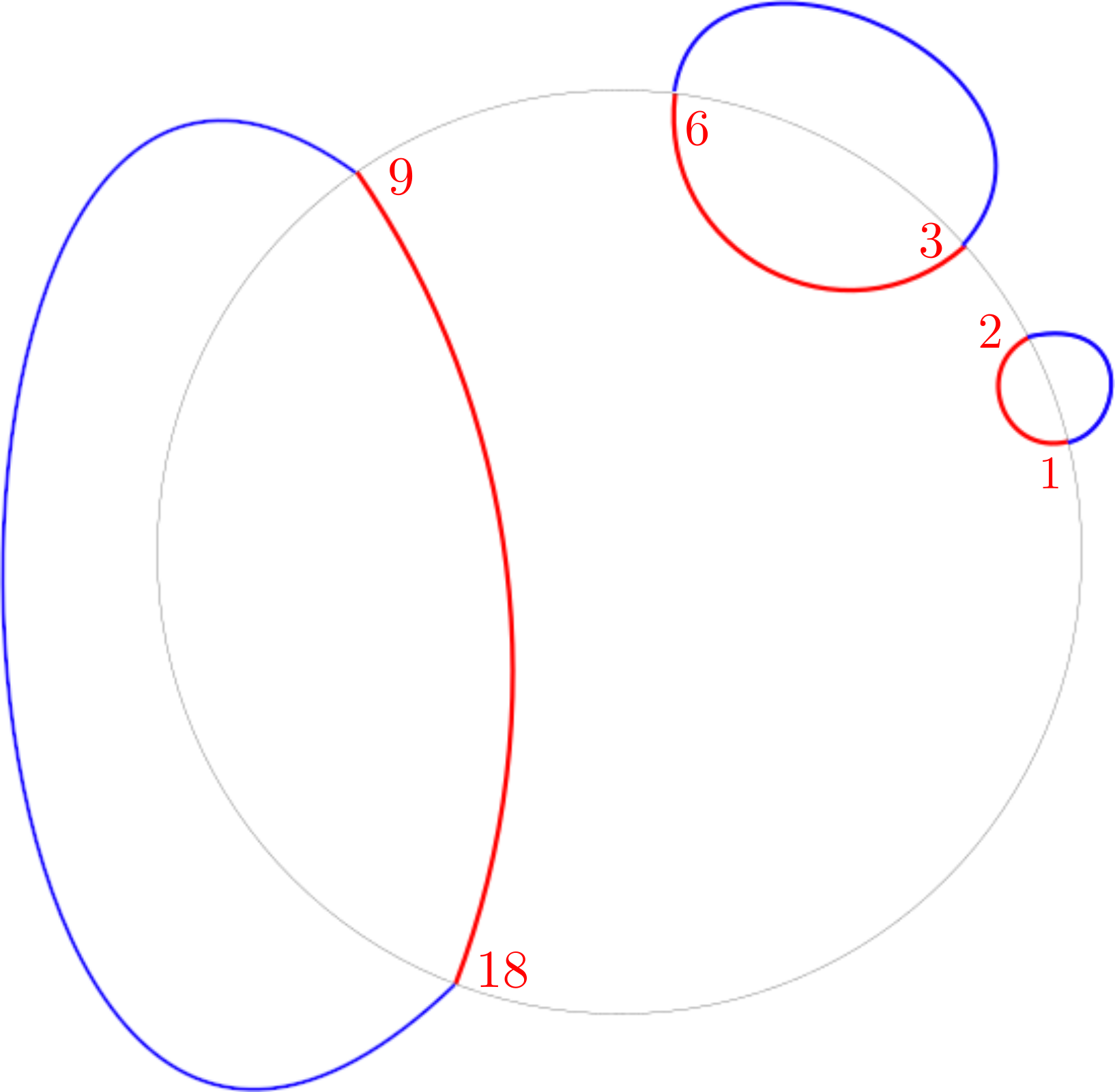}
\end{subfigure}\quad%
\begin{subfigure}{0.48\textwidth}
\centering
\includegraphics[width=0.7\linewidth]{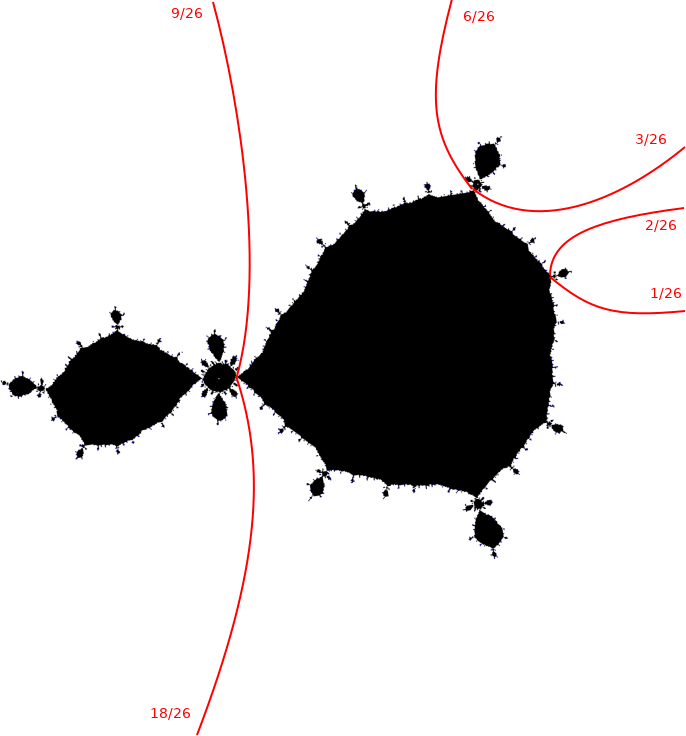}
\end{subfigure}
\caption{On the left is the lamination for the mating in Example~\ref{ex:4over7conj}. On the right is the Julia set for a possible ``inner map''.}\label{f:examplefig}
\end{figure}
\end{ex}

The following example shows what typically happens in the case when the limb $\Limb$ has strictly preperiodic internal argument.

\begin{ex}
We consider a case where the internal argument of the limb is strictly preperiodic, so that the limb consists of a single map on $\partial \Hy_0$. So let $F_a$ be the map in $\Limb_{5/6}^+$. This is the landing point of the ray $\pray(1/8)$, which means $R_a(11/24)$ and $R_a(19/24)$ land together at the free critical point $-a$. For the conjugate map $F_b$, which is the landing point of $\pray(7/8)$, the rays $R_b(7/24)$ and $R_b(13/24)$ both land on the free critical point $-b$.  The ray class containing $-a$ and $-b$ is a closed loop and so the mating is obstructed. See Figure~\ref{f:critfinmate} for the lamination of the mating and the Julia set of $F_a$.
\begin{figure}[!ht]
\centering
\begin{subfigure}{0.48\textwidth}
\centering
\includegraphics[width=0.7\linewidth]{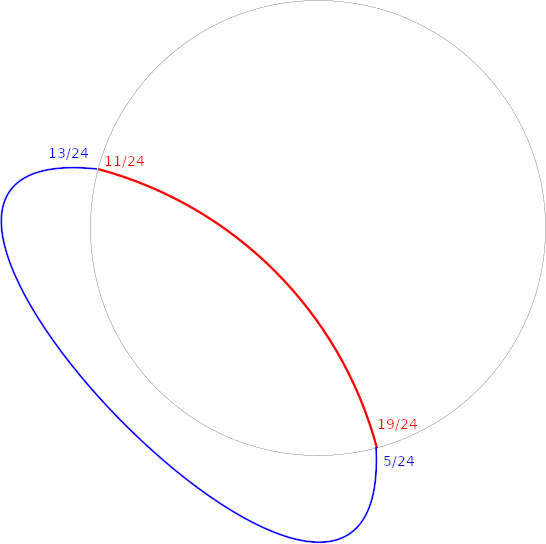}
\label{f:critfinobslam}
\end{subfigure}\quad%
\begin{subfigure}{0.48\textwidth}
\centering 
\includegraphics[width=0.7\linewidth]{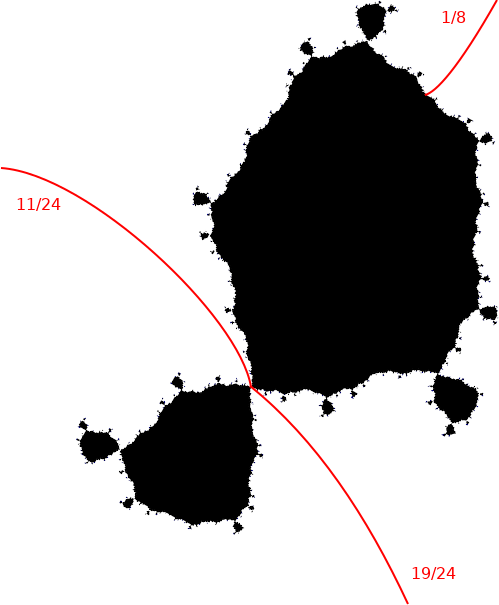}
\label{f:critfinpoly}
\end{subfigure}%
\caption{An obstructed mating for a map in a limb with strictly preperiodic internal argument. The two free critical points belong to the same ray class, and this ray class contains a closed loop. On the right is the Julia set for the ``inner map'' in the lamination.} 
\label{f:critfinmate}
\end{figure}
\end{ex}

In the first example where the internal argument is periodic each of the ray classes in the obstruction contain precisely one point from each of the $\alpha$-periodic cycles of $F_a$ and $F_b$. The obstruction is thus made up of $3$ closed loops, since the $\alpha$-periodic cycles of $F_a$ and $F_b$ both have period $3$. Furthermore, the ``smallest'' loop bounds a disk which contains both free critical values. When the internal argument is strictly preperiodic, the ray class containing a closed loop contains the free critical points of the polynomials. In this case, the Levy cycle generated (as guaranteed by Proposition~\ref{p:limset}) is a degenerate Levy cycle. In both cases, in the language of \cite{CubicObs}, we are in the ``quadratic-like'' case, where a disk component of the complement of the obstructing Levy cycle contains both free critical values.

\begin{defn}
We will say two limbs $\Limb$ and $\Limb'$ are \emph{complementary} if they are $\alpha$-symmetric but not conjugate.
\end{defn}
 
The limbs $\Limb_0^\pm$ are self-complementary; they are their own complementary limbs. Indeed is clear from the definition that in general limbs are mutually complementary; that is, $\Limb'$ is a complementary limb for $\Limb$ if and only if $\Limb$ is a complementary limb for $\Limb'$. By Corollary~\ref{cor:numofcomplimbs} and Lemma~\ref{l:conjugateTheta}, a complementary limb, if it exists, is unique. The following example shows that complementary limbs do exist. 

\begin{ex}
   Consider a map $F_a$ in the parameter limb $\Limb_{4/7}^+$. We saw before in Example~\ref{ex:4over7conj} that that the rays $R_a(\frac{1}{26})$ and $R_a(\frac{2}{26})$ land together in the dynamical plane of $F_a$. The complementary limb is $\Limb_{6/7}^-$. The associated external rays for $\Limb_{6/7}^-$ are $\pray(\frac{49}{78})$ and $\pray(\frac{50}{78})$, meaning if $F_b \in \Limb_{6/7}^-$ then the dynamical rays $R_b(\frac{23}{26})$ and $R_b(\frac{24}{26})$ land together in the dynamical plane. It is then not hard to see we have the lamination of the mating as given in Figure~\ref{firstex}.
   \begin{figure}[!ht]
    \centering
     \includegraphics[width=0.43\textwidth]{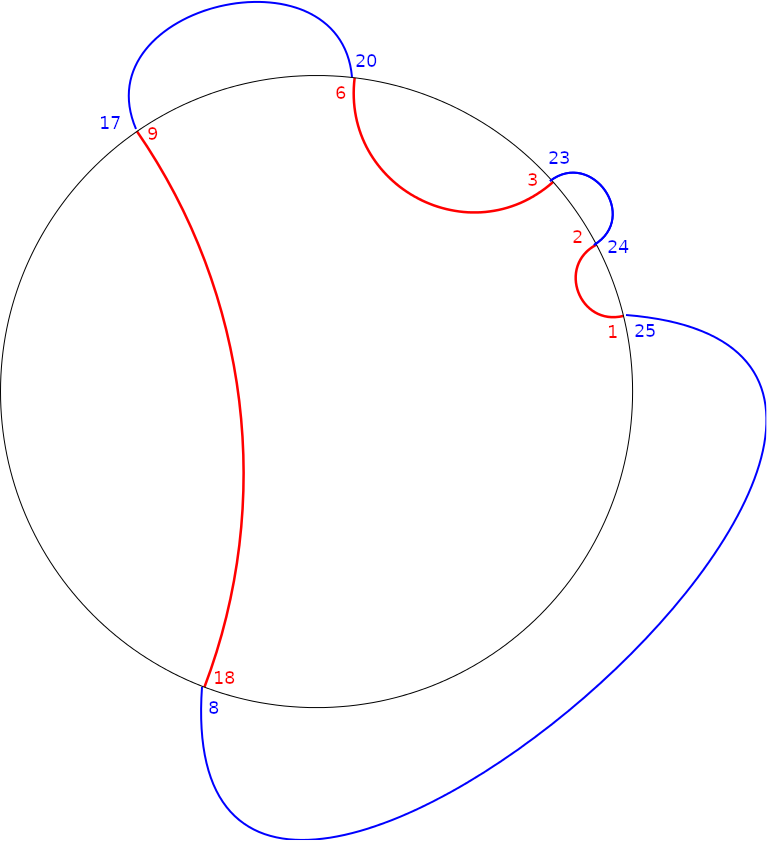}
    \caption{The lamination for the mating of a map in $\Limb_{4/7}^+$ and a  map in $\Limb_{6/7}^-$. The ray classes of the $\alpha$-periodic cycles form a single closed loop. The angles are labelled as multiples of $\frac{1}{26}$.} 
    \label{firstex}
   \end{figure}
   The ray class corresponding to the two $\alpha$-periodic cycles come together to form one large loop, thus the mating is obstructed by this Levy cycle.
 \end{ex}

%

In the above example, there exists a disk component in the complement of the obstructing Levy cycle which contains a fixed critical point. In the language of \cite{CubicObs}, this corresponds to the ```Newton-like'' case.

In the next section, we give a combinatorial criterion for a limb $\Limb$ to have a complementary limb. 

\subsection{Combinatorial data of a limb}

We wish to prove the existence of complementary limbs for limbs whose internal argument has a rotation number. We will also give a combinatorial description of how a limb and a complementary limb are related.

\begin{prop}\label{p:complimbs}
Let $\Limb$ be a limb of $\Sone$. Then $\Limb$ has a complementary limb if and only if $\Limb$ has a rotation number.
\end{prop}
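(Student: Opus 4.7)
My approach is to prove the two directions separately. The two key ingredients will be Corollary~\ref{cor:numofcomplimbs} (counting the limbs that realize a given rotation $\Theta$) and Proposition~\ref{p:Theta_properties} (distinguishing the rotation and non-rotation cases by the number of major gaps).

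For the implication ``$\Limb$ has a rotation number $\Rightarrow$ $\Limb$ has a complementary limb'', I would argue as follows. By Proposition~\ref{p:Theta_properties}(i), the set $\Theta(\Limb)$ is an $m_3$-rotation set that is the union of two distinct cycles, and the same holds for $-\Theta(\Limb)$. Corollary~\ref{cor:numofcomplimbs} then gives exactly two limbs of $\Sone$ whose $\Theta$ equals $-\Theta(\Limb)$; by Lemma~\ref{l:conjugateTheta} one of them is $\overline{\Limb}$, and the other is a complementary limb, provided $\Limb\neq\overline{\Limb}$. The latter fails only in the self-complementary case $t_0=0$ already noted in the text.

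For the converse I would prove the contrapositive: if $\Limb$ has no rotation number then no complementary limb exists. If the internal argument $t_0$ is strictly preperiodic the statement is vacuous because the definition of a complementary limb assumes a periodic internal argument. Otherwise $t_0$ is $m_2$-periodic of some period $q\geq 2$ with an orbit that is not an $m_2$-rotation set, and Proposition~\ref{p:Theta_properties}(ii) says $\Theta(\Limb)$ has exactly one major gap. The key claim is that in this case $\Theta(\Limb)$ determines $\Limb$ uniquely. To see this, note that the unique major gap has length $3^{q-1}/(3^q-1)$ and, by Corollary~\ref{cor:critlimbrays} and Proposition~\ref{prop:limblanding}, its endpoints are $\theta+\tfrac{1}{3}$ and $\theta'-\tfrac{1}{3}$, where $(\theta,\theta')$ are the parameter angles landing at the root of $\Limb$, with $\theta'-\theta = 1/(3(3^q-1))$. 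Since $3^{q-1}/(3^q-1) < 1/2$ for $q\geq 2$, the two endpoints are canonically ordered by traversing the gap in the positive direction on $\T$, so $(\theta,\theta')$ is recovered unambiguously, and this pair determines the root of $\Limb$ and hence $\Limb$ itself. As $\overline{\Limb}$ also lies in the non-rotation case (its internal argument $-t_0$ has an $m_2$-orbit that is the reflection of that of $t_0$), any candidate complementary limb $\Limb'$ with $\Theta(\Limb') = -\Theta(\Limb) = \Theta(\overline{\Limb})$ must by this uniqueness coincide with $\overline{\Limb}$, contradicting complementarity.

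The main obstacle is this uniqueness step in the non-rotation case; the sufficiency direction merely assembles statements already established in the excerpt. The uniqueness rests on two specific facts: that in the non-rotation case there is precisely one major gap of $\Theta(\Limb)$ (so the critical-limb gap is identified with no ambiguity), and that this gap has length strictly less than $1/2$ for $q\geq 2$ (so its two endpoints are distinguished by the orientation of the gap, which lets us extract $(\theta,\theta')$ in the correct order).
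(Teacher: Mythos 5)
Your proof is correct and follows essentially the same route as the paper: the rotation-number direction is the same counting argument via Corollary~\ref{cor:numofcomplimbs} and Lemma~\ref{l:conjugateTheta}, and the converse is the same uniqueness argument, namely that the single major gap of $\Theta(\Limb)$ pins down the rays at the root of the critical limb and hence forces any $\Limb'$ with $\Theta(\Limb')=-\Theta(\Limb)$ to be $\overline{\Limb}$. Your extra caveat about $\Limb=\overline{\Limb}$ in the first direction is unnecessary (the argument only needs the second of the two limbs to differ from $\overline{\Limb}$, which it does by definition), but this does not affect correctness.
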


\begin{proof}
First suppose $\Limb$ does not have a rotation number. By Proposition~\ref{p:Theta_properties}, the set $\Theta(\Limb)$ has only one major gap, bounded by the angles $\theta$ and $\theta'$. In the dynamical plane for a map $F_a \in \Limb$, this major gap is bounded by the external rays $R_a(\theta)$ and $R_a(\theta')$ landing on the root point of the critical limb of $F_a$. Now suppose $\Limb'$ is a limb with $\Theta(\Limb') = - \Theta(\Limb)$. Then, since for any map $F_b \in \Limb'$ the rays landing at the root of the critical limb have to bound a major gap, we see that these rays must be angles $R_b(-\theta)$ and $R_b(-\theta')$. But then we see that $\Limb'$ must be the conjugate limb to $\Limb$, and so $\Limb$ has no complementary limb.

Now suppose $\Limb$ has a rotation number. Then the set $\Theta(\Limb)$ has two major gaps and thus the set $-\Theta(\Limb)$ also has two major gaps and so is a rotation set. But by Corollary~\ref{cor:numofcomplimbs}, there are two limbs satisfying $\Theta(\Limb') = - \Theta(\Limb)$. Since only one can be a conjugate limb, we see that $\Limb$ must have a complementary limb.
\end{proof}

We provide a combinatorial description of parameter limbs which will allow us to find the complementary limb for any limb which has a rotation number. We first need to discuss the combinatorics of rotation sets under $m_2$. Let $t_0$ belong to the unique rotation set $X_{p/q}$ under $m_2$ with rotation number $\rho = p/q \in \Q-\{0\}$. Write $X_{p/q}$ in cyclic order as $\{ t_1, t_2, \dotsc, t_q \}$, so that $0 \in (t_q,t_1)$. Then there exists a unique $1 \leq k \leq q$ such that $t_0 = t_k$. We call $k$ the (combinatorial) position of $t_0$ in $X_{p/q}$. We define the \emph{combinatorial data} for the limb $\Limb_{t_0}^{\pm}$ to be the triple $(\rho,k,\pm)$.

\begin{ex}
Consider the $m_2$-rotation set $X_{2/5}$ with rotation number $\rho = \frac{2}{5}$. Then $X_{2/5} =\{ 5/31,9/31,10/31,18/31, 20/31  \}$ and the combinatorial position of $18/31$ is $4$. Thus the Limb $\Limb_{18/31}^+$ has combinatorial data $\left( \frac{2}{5}, 4, + \right)$.
\end{ex}

It is clear that, given a triple $\mathcal{D} = (p/q,k,\pm)$ where $1 \leq k \leq q$ , there exists a unique limb $\Limb$ which realises the combinatorial data $\mathcal{D}$. The combinatorial data above allows us to easily recognise complementary limbs. As an aside, we remark that if a limb has combinatorial data $(p/q,k,\pm)$ then the conjugate limb has combinatorial data $(-p/q,q+1-k,\pm)$. 

\begin{lem}\label{l:positioncharacterisation}
Suppose the combinatorial position of $t_0$ is $k$. 
\begin{enumerate}
\item  The number of elements in $\Theta(\Limb_{t_0}^+) \cap [0,1/2)$ is equal to $2k-1$.
\item  The number of elements in $\Theta(\Limb_{t_0}^-) \cap [1/2,1)$ is equal to $2k-1$.
\end{enumerate}
\end{lem}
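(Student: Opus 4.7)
The plan is to exploit the planar structure of the external rays landing on the $\alpha$-periodic cycle. Write the rotation set $X_{p/q}$ in cyclic order as $t_1 < t_2 < \cdots < t_q$ (with $0 \in (t_q, t_1)$), so that the hypothesis says $t_0 = t_k$. Since $\beta$ preserves cyclic order, the biaccessible points $\beta(t_1), \ldots, \beta(t_q)$ of the $\alpha$-periodic cycle are arranged in this same cyclic order on $\partial U_a$, with the $\beta$-fixed point $\beta(0)$ sitting between $\beta(t_q)$ and $\beta(t_1)$. At each $\beta(t_i)$ a pair of external rays lands, with angles $\{\alpha_i, \alpha_i'\} \subset \Theta(\Limb_{t_0}^{\pm})$.

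The key observation is that the cyclic order of the pairs $\{\alpha_i, \alpha_i'\}$ on $\T$ is forced to agree with the cyclic order of the points $\beta(t_i)$ on $\partial U_a$. This is the non-crossing principle for external rays: the two rays at $\beta(t_i)$ separate the plane into two regions, one containing $U_a$ (together with the other limbs $K_{t_j}$, $j \neq i$) and one containing the limb $K_{t_i}$ together with its descendant limbs; correspondingly, one arc of $\T \setminus \{\alpha_i,\alpha_i'\}$ contains all other pairs $\{\alpha_j,\alpha_j'\}$, and the other contains none of them.

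For part (i), I invoke Lemma~\ref{l:fixedraylanding}(i): $R_a(0)$ lands at $\beta(0)$, while $R_a(1/2)$ lands inside the critical limb $K_{t_k}$. Hence, in the cyclic order on $\T$, the angle $0$ lies in the gap between $\alpha_q'$ and $\alpha_1$, while the angle $1/2$ lies strictly between $\alpha_k$ and $\alpha_k'$. Writing everything out linearly on $[0,1)$ gives
\[
0 < \alpha_1 < \alpha_1' < \cdots < \alpha_{k-1} < \alpha_{k-1}' < \alpha_k < \tfrac{1}{2} < \alpha_k' < \alpha_{k+1} < \cdots < \alpha_q' < 1,
\]
so that $\Theta(\Limb_{t_0}^+) \cap [0,\tfrac{1}{2}) = \{\alpha_1,\alpha_1',\ldots,\alpha_{k-1},\alpha_{k-1}',\alpha_k\}$, which has $2(k-1)+1 = 2k-1$ elements.

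For part (ii), I would either argue symmetrically using Lemma~\ref{l:fixedraylanding}(ii) (now $1/2$ lies in the arc containing $\beta(0)$ and $0$ lies in the arc behind the critical limb, so counting elements in $[1/2, 1)$ produces the symmetric picture), or appeal to the $180^{\circ}$ rotational symmetry $F_a \in \Limb_{t_0}^+ \iff F_{-a} \in \Limb_{t_0}^-$, which at the level of B\"ottcher coordinates gives $\Theta(\Limb_{t_0}^-) = \Theta(\Limb_{t_0}^+) + \tfrac{1}{2} \pmod 1$; then the translation $\theta \mapsto \theta+\tfrac{1}{2}$ bijects $[0,\tfrac{1}{2})$ with $[\tfrac{1}{2},1)$ and the count in part (i) transfers directly. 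The only genuinely nontrivial step is the cyclic-order matching in the second paragraph; once this planarity input is secure, the count is purely bookkeeping.
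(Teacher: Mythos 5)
Your proof is correct and takes essentially the same route as the paper's: both rest on Lemma~\ref{l:fixedraylanding} (placing the angle $0$ at the fixed point on $\partial U_a$ and $1/2$ behind the critical limb) together with the non-crossing of external rays to transfer the cyclic order of the $\beta(t_i)$ to the external angle pairs; you merely make the planarity step more explicit and add the symmetry shortcut $\Theta(\Limb_{t_0}^-)=\Theta(\Limb_{t_0}^+)+\tfrac12 \pmod 1$ for part (ii). The only omission is the degenerate case $t_0=0$, where Lemma~\ref{l:fixedraylanding} does not apply and the paper checks the claim directly from $\Theta(\Limb_0^\pm)=\{0,1/2\}$ and $k=1$.
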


\begin{proof}
As usual, we only prove the first claim, since the proof of the second is similar. For $t_0 = 0$, the only possible position is $k=1$ and the result follows since the only rotation set consisting of two cycles with rotation number $0/1$ is $\{0,1/2\}$. Suppose $t_0 \neq 0$ has combinatorial position $k$ and let $F_a \in \Limb_{t_0}^+$. Then by Lemma~\ref{l:fixedraylanding}, the angles of the external rays $R_a(\theta)$, $R_a(\theta')$ landing on the root of the critical limb of $F_a$ satisfy $\theta < 1/2 < \theta'$. Since $\beta(0)$ is the landing point of the ray $R_a(0)$, then for $1 \leq i \leq k-1$, the two external rays landing on $\beta(t_i)$ lie in $[0,1/2)$, and for $i > k$, the angles of the external rays landing on $\beta(t_i)$ lie in $[1/2,1)$. The result follows. 
\end{proof}

\begin{lem}\label{l:limbdata}
 Suppose $\Limb$ is the limb with combinatorial data $(p/q,k,\pm)$. Then the limb with data $\mathcal{D}'=(-p/q,k,\mp)$ is the complementary limb to $\Limb$.
 \end{lem}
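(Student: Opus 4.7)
The plan is to identify the target complementary limb directly and verify it has the desired property via signatures. Let $\Limb = \Limb_{t_0}^{\pm}$ have data $(p/q,k,\pm)$ and let $\Limb''$ denote the limb with data $(-p/q,k,\mp)$. By Proposition~\ref{p:complimbs}, $\Limb$ admits a complementary limb, and by Corollary~\ref{cor:numofcomplimbs} together with Lemma~\ref{l:conjugateTheta}, the complementary limb is the unique limb $\Limb^\star \neq \overline{\Limb}$ satisfying $\Theta(\Limb^\star) = -\Theta(\Limb)$. So it suffices to check two things: (a) that $\Limb'' \neq \overline{\Limb}$, and (b) that $\Theta(\Limb'') = -\Theta(\Limb)$.

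For (a), recall that the combinatorial data is only defined for $t_0 \neq 0$; the text records that in this case $\overline{\Limb_{t_0}^{\pm}} = \Limb_{-t_0}^{\pm}$, i.e.\ the conjugate carries the same sign. Since $\Limb''$ carries the opposite sign $\mp$, it cannot equal $\overline{\Limb}$, so (a) holds for free.

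For (b), I would match rotation numbers and then signatures. The map $t \mapsto -t$ on $\T$ intertwines $m_3$ (reversing orientation), so $-\Theta(\Limb)$ is an $m_3$-rotation set with rotation number $-p/q$; the union-of-two-cycles property is preserved. By Proposition~\ref{p:Theta_properties}(i), $\Theta(\Limb'')$ is also an $m_3$-rotation set with rotation number $-p/q$ and is the union of two cycles. By Proposition~\ref{p:signaturetheorem} these two sets are determined by their signatures, i.e.\ by the single number $s_1$ counting elements in $[0,1/2)$, so it remains to compare these counts.

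For the sign $+$ case, Lemma~\ref{l:positioncharacterisation}(i) gives $s_1(\Theta(\Limb)) = 2k-1$, hence $|\Theta(\Limb) \cap [1/2,1)| = 2q - (2k-1) = 2q-2k+1$. The involution $t \mapsto 1 - t$ gives a bijection between $\Theta(\Limb) \cap (0,1/2)$ and $-\Theta(\Limb) \cap (1/2,1)$; here the endpoint check is the one slightly delicate step and will be the main (minor) obstacle, handled by noting that $0 \notin \Theta(\Limb)$ since $t_0 \neq 0$, while $1/2 \notin \Theta(\Limb)$ since by Lemma~\ref{l:fixedraylanding} the ray $R_a(1/2)$ lands in the critical limb rather than on $\partial U_a$. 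This yields $s_1(-\Theta(\Limb)) = 2q-2k+1$. On the other hand, applying Lemma~\ref{l:positioncharacterisation}(ii) to $\Limb'' = \Limb_{s_0}^-$ at position $k$ gives $|\Theta(\Limb'') \cap [1/2,1)| = 2k-1$, whence $s_1(\Theta(\Limb'')) = 2q - 2k + 1$. The signatures agree, so $\Theta(\Limb'') = -\Theta(\Limb)$, completing (b). The sign $-$ case is identical with the roles of Lemma~\ref{l:positioncharacterisation}(i) and (ii) swapped.
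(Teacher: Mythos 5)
Your proof is correct and follows essentially the same route as the paper's: both hinge on the count of $2k-1$ elements from Lemma~\ref{l:positioncharacterisation} together with the fact that negation swaps the two half-circles. The only cosmetic difference is that you verify the candidate limb directly via the signature uniqueness of Proposition~\ref{p:signaturetheorem}, whereas the paper starts from the complementary limb (known to exist and to have sign $\mp$) and deduces its position via Corollary~\ref{cor:numofcomplimbs}; these amount to the same argument.
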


 \begin{proof}
  The result clearly holds for $t_0$ = 0. So suppose $\Limb = \Limb_{t_0}^+$ has data $(p/q,k,+)$ with $p/q \neq 0$. The complementary limb $\Limb'$, which we know exists by Proposition~\ref{p:complimbs}, must then satisfy $\Theta(\Limb') = - \Theta(\Limb)$ and so will have rotation number $-p/q$. By Corollary~\ref{cor:numofcomplimbs}, there are two limbs for which this equality holds. One must be the conjugate limb to $\Limb$, which is the limb $\Limb_{-t_0}^+$, and the other must be a limb $\Limb_{s_0}^-$ for some $s_0$. 
  
  By Lemma~\ref{l:positioncharacterisation}, there are $2k-1$ elements in $\Theta(\Limb) \cap (0,1/2)$. Thus the complementary limb $\Limb_{s_0}^-$ must have $2k-1$ elements in $\Theta(\Limb_{s_0}^-) \cap (1/2,1)$. But again by Lemma~\ref{l:positioncharacterisation}, this means the combinatorial position of $s_0$ is $k$. Hence the complementary limb has data $(-p/q,k,-)$. The proof for a limb with data $(p/q,k,-)$ is analogous.
 \end{proof}

 We give some examples of how we find complementary limbs using the above lemma.

 \begin{ex}
  Again we consider the limb $\Limb_{18/31}^+$. Since this limb has combinatorial data $\left( \frac{2}{5}, 4, + \right)$, Lemma~\ref{l:limbdata} implies that the complementary limb has combinatorial data $\left( \frac{3}{5}, 4, - \right)$. Let $X_{3/5}$ be the unique rotation set under $m_2$ with rotation number $3/5$. Then
  \[
   X_{3/5} = -X_{2/5} = \{ 11/31, 13/31, 21/31, 22/31, 26/31 \}
  \]
The element in this set with position $4$ is $22/31$. Thus the complementary limb to $\Limb_{18/31}^+$ is $\Limb_{22/31}^-$. Using results of Zakeri \cite{SaeedBook}, one can compute that the rays $\pray(19/726)$ and $\pray(20/726)$ land at the root of $\Limb_{18/31}^+$, while the rays $\pray(427/726)$ and $\pray(428/726)$ land on the root of $\Limb_{22/31}^-$. The major gaps for $\Theta(\Limb_{18/31}^+)$ are the arcs $( 87/242,  168,242)$ and $(180/242,19/242)$. The major gaps for $\Theta(\Limb_{22/31}^-$ are the arcs $(74/242,155/242)$ and $(223/242,62/242)$. One can easily check that the ray class formed create a single closed loop.
 \end{ex}
 
  Before giving the next example, we present some results from \cite{ACUB}. Let $F_a \in \Sone$ and denote by $F_a^\ast$ the map $F_{-\overline{a}}$. It was shown that, if the mating $F_a \mate F_a^\ast$ was not obstructed, then the resulting rational map belongs to the space $\mathcal{A}$ of antipode preserving cubic rational maps (That is, maps which commute with the map $z \mapsto -1/\overline{z}$) which have a pair of fixed critical points. In particular, if $F_a \in \Hy_0 \subseteq \Sone$, then it was shown that the mating $F_a \mate F_a^\ast$ exists and belongs to the main hyperbolic component $\widetilde{\Hy}_0$ in the space of $\mathcal{A}$ (this component has the map $z \mapsto -z^3$ as its centre). As with $\Hy_0 \subseteq \Sone$, one can assign an internal argument to points on $\partial \widetilde{\Hy}_0$.
  
  Now suppose that $\Theta_{p/q}$ is the (unique) $m_2$-rotation set with rotation number $p/q$. When $q$ is odd, there exists a unique $\theta \in \Theta_{p/q}$ which has combinatorial position $(q+1)/2$ in $\Theta_{p/q}$. Following \cite{ACUB}, we say the angle $\theta$ is \emph{balanced}. It was shown that if an angle $\theta$ is balanced, then the corresponding point on $\partial \widetilde{\Hy}_0$ with internal argument $\theta$ is an ideal point on the circle at infinity in $\mathcal{A}$. This suggests that matings of maps in the $\Limb_\theta^\pm$ with those in the limb $(\Limb_\theta^\pm)^\ast $ is obstructed. But observe that if $\theta \in \Theta_{p/q}$ is balanced then the combinatorial position of $\theta$ is $(q+1)/2$ and so $\Limb_\theta^+$ has data $(p/q,(q+1)/2,+)$. Lemma~\ref{l:limbdata} then asserts that the complementary limb will have data $(-p/q,(q+1)/2,-)$. But the limb with this data is $\Limb_{-\theta}^-$, which is precisely the limb $(\Limb_\theta^\pm)^\ast $. Thus the balanced case corresponds exactly to the case where $\Limb^\ast$ is the complementary limb to $\Limb$. We give an example below.
 
  \begin{ex}
   Consider the limb $\Limb_{5/7}^+$. The associated external (parameter) angles are $\frac{7}{78}$ and $\frac{8}{78}$, and the combinatorial data of $\Limb$ is $\left(\frac{2}{3},2,+\right)$, and so the angle $5/7$ is balanced. For maps in this limb, the rays of angles $\frac{7}{26}$ and $\frac{8}{26}$ land at a common period $3$ point on $\partial U_a$. The limb with combinatorial data $\left( \frac{1}{3}, 2,-\right)$ has internal angle $\frac{2}{7}$ and the associated external angles are $\frac{31}{78}$ and $\frac{32}{38}$, from which it follows that the rays of angles $\frac{5}{26}$ and $\frac{6}{26}$ land together in the dynamical plane. This time we have situation as illustrated in Figure~\ref{f:secondex}.
   \begin{figure}[ht]
    \begin{center}
     \includegraphics[width=0.35\textwidth]{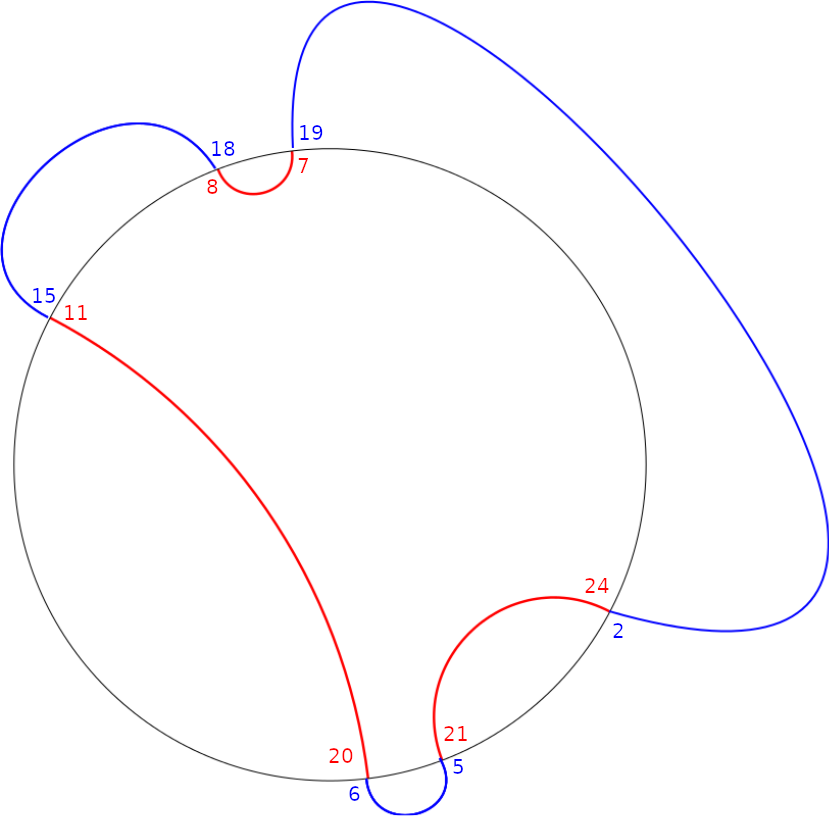}
     \caption{An example when the limb is balanced in the sense of \cite{ACUB}. Notice that picture has a symmetry (inversion in the unit circle followed by $180^\circ$ rotation), which occurs because the complementary limb is equal to the ``negative conjugate'' of the original limb in the balanced case. We again label the angles as multiples of $\frac{1}{26}$.}
     \label{f:secondex}
    \end{center}
   \end{figure}

   Again, the ray class corresponding to the $\alpha$-periodic cycles form one loop.
   \end{ex}

\section{Main results}


We now are ready to state the main theorem. Recall that in the quadratic case, the culmination of work by Rees, Shishikura and Tan gave the following.

\begin{thm}[\cite{Tan:quadmat}]
Let $f$ and $g$ be postcritically finite quadratic polynomials in the Mandelbrot set. Then $f \mate g$ is obstructed if and only if $f$ and $g$ belong to conjugate limbs of the Mandelbrot set.
\end{thm}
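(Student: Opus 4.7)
The plan is to split the argument into necessity (which follows the pattern of Proposition~\ref{p:limbcondition}) and sufficiency (which requires substantially more work). For necessity, suppose $f$ and $g$ lie in conjugate limbs of the Mandelbrot set with rotation number $p/q$. I would first recall that for a quadratic in the $p/q$-limb, the $\alpha$-fixed point is the common landing point of the $q$ external rays whose angles form the unique $m_2$-rotation set $X_{p/q}$; for the conjugate limb the corresponding angles form $-X_{p/q}$. Under the formal mating, the identification $\infty \cdot e^{2\pi i t} \in \OC_f \sim \infty \cdot e^{-2\pi i t} \in \OC_g$ glues the ray $R_f(t)$ to $R_g(-t)$, so each ray landing at the $\alpha$-fixed point of $f$ is glued to one landing at the $\alpha$-fixed point of $g$. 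As in the proof of Proposition~\ref{p:limbcondition}, the resulting ray class contains a finite graph in which every vertex is met by two edges and hence contains a closed loop; by Proposition~\ref{p:limset} this yields a non-removable Levy cycle, so $f \mate g$ is obstructed.

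For sufficiency, assume $f$ and $g$ do \emph{not} lie in conjugate limbs. The goal is to show that every ray equivalence class is a tree containing at most one postcritical point, after which Thurston's theorem delivers a rational representative. First I would encode the ray classes by means of Thurston's invariant quadratic laminations $\Lambda_f$ and $\Lambda_g$: let $-\Lambda_g$ denote the lamination obtained from $\Lambda_g$ by negating every leaf (to account for the mating gluing), and note that the components of the support of $\Lambda_f \cup (-\Lambda_g)$ correspond bijectively to ray classes in $\Sigma_{f,g}$. Using the ``no wandering triangle'' theorem, together with the combinatorial fact that simultaneous periodic identifications of $\Lambda_f$ and $-\Lambda_g$ can occur only when the limbs are conjugate, I would deduce that no component of $\Lambda_f \cup (-\Lambda_g)$ contains a cycle; local finiteness of ray classes at Julia set points (from local connectivity in the postcritically finite case) then forces each ray class to be a tree, and in particular $\Sigma_{f,g}/\approx$ is a topological sphere. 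To rule out Thurston obstructions, I would then invoke the quadratic analog of Theorem~\ref{mthm2}: any obstruction contains a Levy cycle, and Proposition~\ref{p:limset} forces its limit set either to contain a closed loop (excluded above) or to contain two postcritical points in a single ray class, and tracing the relevant identifications back to their periodic kernels in $\Lambda_f \cup (-\Lambda_g)$ again produces a loop ruled out above.

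The hard part is the combinatorial input to the first stage of sufficiency: proving, through the interaction of $\Lambda_f$ and $-\Lambda_g$, that the \emph{only} way a ray equivalence class can acquire a loop is via the simultaneous biaccessibility at the two $\alpha$-fixed points forced by the conjugate-limb condition. This is the original Rees--Shishikura--Tan insight, and in spirit the present paper's analysis of $\Theta(\Limb)$ and complementary limbs is the cubic analog of it.
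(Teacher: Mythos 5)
First, a framing point: the paper does not prove this statement. It is quoted as a background theorem from the literature (Rees, Shishikura, Tan, cited as \cite{Tan:quadmat}), so there is no in-paper proof to compare against. Judged on its own terms, your proposal splits cleanly. The necessity half is correct and is essentially Proposition~\ref{p:limbcondition} transplanted to degree two: the external rays landing at the two $\alpha$-fixed points pair up under $t \mapsto -t$, the resulting ray class is a finite graph on two vertices joined by $q \geq 2$ edges, hence contains a closed loop, and Proposition~\ref{p:limset} converts this into a non-removable Levy cycle.

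The sufficiency half has a genuine gap, which your closing paragraph half-concedes. The two steps you ``invoke'' --- (a) that a loop in a ray equivalence class can only arise from simultaneous periodic identifications at the two $\alpha$-fixed points, i.e.\ from the conjugate-limb condition, and (b) that any Thurston obstruction to a quadratic mating contains a Levy cycle --- are not citable lemmas feeding into a proof of this theorem; together they essentially \emph{are} the theorem. For (a), the ``no wandering triangle'' theorem does not by itself rule out loops coming from periodic biaccessible points other than the $\alpha$-fixed point; one must actually analyse periodic ray classes and show that the cycle of identifications between $\Lambda_f$ and the reflected lamination of $g$ can close up only at the $\alpha$-fixed points, and this is where the real work of Tan Lei's argument lives. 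For (b), the claim that quadratic mating obstructions are Levy cycles is the Rees--Shishikura--Tan theorem itself (the degree-two antecedent of Theorem~\ref{mthm2}); it cannot be obtained ``by analogy'' with the cubic case, since the cubic version is proved in \cite{CubicObs} by methods specific to $\Sone$. Finally, your sketch conflates ``$\Sigma_{f,g}/\approx$ is a sphere'' with ``the mating is unobstructed'': in the Misiurewicz cases the formal mating can still carry removable Levy cycles from identified postcritical points, and one needs the essential-mating modification (as in Proposition~\ref{p:RLprime}) before applying Thurston's theorem. So the proposal is a correct road map of the known proof, but not itself a proof.
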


%

The work carried out so far means we can easily prove the following, which is a slightly stronger version of Theorem~\ref{introthm}.

\begin{thm}\label{MainThm}
Let $t_0$ be $m_2$-periodic such that the associated orbit has a rotation number. Let $F_a$ be a postcritically finite polynomial in the limb $\Limb_{t_0}^{\pm}$. Then there exists a postcritically finite polynomial $F_b$ which does not belong to the conjugate limb $\Limb_{-t_0}^{\pm}$ such that $F_a$ and $F_b$ are not mateable.
\end{thm}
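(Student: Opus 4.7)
The plan is essentially to invoke the machinery already developed. Given $F_a$ in $\Limb_{t_0}^{\pm}$ with $t_0$ being $m_2$-periodic and such that its orbit under $m_2$ forms a rotation set, Proposition~\ref{p:complimbs} guarantees that $\Limb_{t_0}^{\pm}$ possesses a complementary limb $\Limb'$. By Lemma~\ref{l:limbdata}, if $\Limb_{t_0}^{\pm}$ has combinatorial data $(p/q,k,\pm)$, then $\Limb'$ is the unique limb with data $(-p/q,k,\mp)$, which by the definition of complementary limb is distinct from the conjugate limb $\Limb_{-t_0}^{\pm}$.

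It then suffices to exhibit a postcritically finite polynomial $F_b \in \Limb'$, for then Lemma~\ref{l:compobstructed} immediately gives that $F_a \uplus F_b$ is obstructed, and by construction $F_b \notin \Limb_{-t_0}^{\pm}$. To produce such an $F_b$, I would appeal to the structure of the limbs: since the internal argument of $\Limb'$ is periodic under $m_2$, the limb $\Limb'$ is a non-degenerate piece of $\mathcal{C}(\Sone)$ containing hyperbolic subcomponents (not just the single root point on $\partial \Hy_0$). Each such hyperbolic component has a centre which is a postcritically finite polynomial, so postcritically finite maps exist in $\Limb'$. Alternatively, one may take $F_b$ to be any Misiurewicz-type map in $\Limb'$, for instance the landing point of a suitable preperiodic parameter ray inside the limb.

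The only delicate point is justifying the existence of postcritically finite maps inside the complementary limb $\Limb'$, but this is standard: the existence of centres of hyperbolic components inside any non-trivial limb of $\mathcal{C}(\Sone)$ follows from the general theory of parameter spaces developed in \cite{CP1} (or one can use that preperiodic parameter rays inside $\Limb'$ land at postcritically finite parameters). No new combinatorial work is required; the theorem is essentially a packaging of Proposition~\ref{p:complimbs}, Lemma~\ref{l:limbdata}, and Lemma~\ref{l:compobstructed} together with this density-type statement about postcritically finite maps in the limb.
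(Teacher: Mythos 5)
Your proposal is correct and follows essentially the same route as the paper: obtain the complementary limb $\Limb'$ from Proposition~\ref{p:complimbs}, pick a postcritically finite $F_b \in \Limb'$, and conclude via Lemma~\ref{l:compobstructed}. The paper's proof is just this in three lines (it does not even pause to justify the existence of postcritically finite maps in $\Limb'$, which you spell out); your extra appeal to Lemma~\ref{l:limbdata} is harmless but not needed.
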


\begin{proof}
Since $t_0$ has a rotation number under $m_2$, then by Proposition~\ref{p:complimbs}, the limb $\Limb_{t_0}^{\pm}$ has a complementary limb $\Limb'$. Given $F_a \in \Limb_{t_0}^{\pm}$, take $F_b$ to be any postcritically finite map in $\Limb'$. Then $F_a \uplus F_b$ is obstructed.
\end{proof}

The proof of Theorem~\ref{introthm} follows immediately. We believe that $\alpha$-symmetry is also necessary, and so conjecture the following.




 \begin{conj}\label{conj:Conjecture}
 Let $f,g \in \Sone$ be postcritically finite polynomials in $\Sone$. Then $f$ and $g$ are not mateable if and only if $f$ and $g$ belong to conjugate or complementary limbs in $\Sone$.
 \end{conj}

Thus, we conjecture that the mateability question in $\Sone$ can be completely understood from the combinatorics of the $\alpha$-periodic cycles of the two polynomials, just as the mateability question for quadratic polynomials can be completely understood from the combinatorics of the rays landing on the $\alpha$-fixed points of the polynomials being mated. Numerical experiments, involving looking at various parts of the space of cubic rational maps which have two fixed critical points (for published examples, see \cite{Baranski,ACUB}) provide strong evidence that this is the case.  

   In the quadratic case, the mating of $f$ and $g$ is obstructed if and only if the ray classes under the mating of the $\alpha$-fixed points of $f$ and $g$ coincide. The generalisation for the $\alpha$-periodic cycles does not hold: it is possible for the two $\alpha$-periodic cycles to belong to the same ray class (or a union of ray classes, each of which contains an element of both $\alpha$-periodic cycles), but for this ray class (or union of ray classes) not to contain a loop. In terms of the sets $\Theta(\Limb)$ and $\Theta(\Limb')$, this means that $\Theta(\Limb) \cap -\Theta(\Limb') \neq \varnothing$ but $\Theta(\Limb) \neq -\Theta(\Limb')$. The reason is that $m_2$-rotation sets are uniquely defined by their rotation numbers; this is not the case for $m_3$-rotation sets.
   
   \begin{ex}\label{ex:hypnotobstructed}
    We give an example where $\Theta(\Limb) \cap -\Theta(\Limb') \neq \varnothing$ but $\Limb$ and  $\Limb'$ are not $\alpha$-symmetric. Let $F_a$ be the unique map in $\Limb = \Limb_{2/3}^+$ for which the free critical point $-a$ has period $2$ (the Julia set is shown on the left of Figure~\ref{f:alphacycle}). Since $\Theta(\Limb) = \{ 1/8,2/8,3/8,6/8\}$, we see that $\Theta(\Limb) \cap -\Theta(\Limb) = \{ 2/8,6/8\}$. But the mating is not obstructed; the only biaccessible periodic points for $F_a$ belong to the $\alpha$-periodic cycle, and so there is no Levy cycle for this mating. $F_a \uplus F_a$ is equivalent to the rational map given in Figure~\ref{f:mating}.
    
\begin{figure}[ht]
\centering
\begin{subfigure}{0.4\textwidth}
\centering
\includegraphics[width=0.7\linewidth]{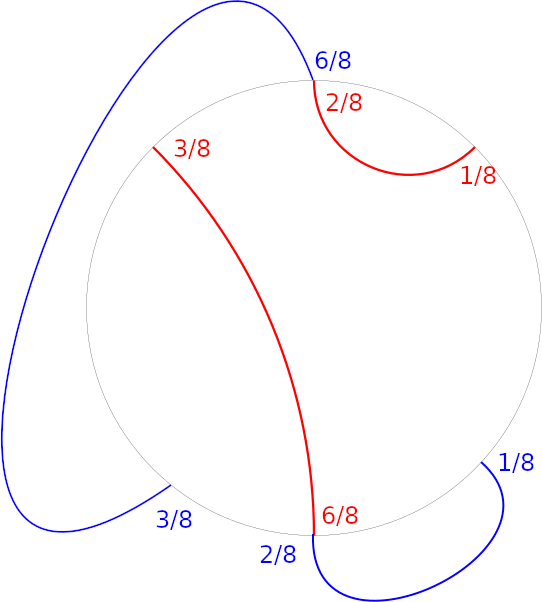}
\label{f:selfmatelam}
\end{subfigure}\quad%
\begin{subfigure}{0.57\textwidth}
\centering
\includegraphics[width=0.9\linewidth]{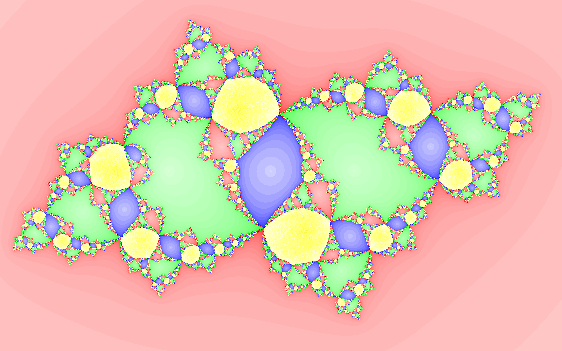}
\label{f:selfmate}
\end{subfigure}%
\caption{An example where $\Theta(\Limb) \cap -\Theta(\Limb') \neq \varnothing$ but the mating is not obstructed. On the left is the lamination, with labelled angles, showing the rays landing on the $\alpha$-periodic cycles, and on the right is the Julia set of the rational map constructed by the mating.} 
\label{f:mating}
\end{figure}
   \end{ex}

 \subsection{Necessity of $\alpha$-symmetry for preperiodic internal arguments} 
 In the case where the internal argument $t_0$ of the limb $\Limb = \Limb_{t_0}^\pm$  is strictly preperiodic, we can prove that if the mating $F_a \uplus F_b$ of postcritically finite $F_a \in \Limb$ and $F_b \in \Limb'$ is obstructed, then $\Limb$ and $\Limb'$ are $\alpha$-symmetric. The goal of this section is to prove the following theorem.
  
 \begin{thm}\label{t:preperneccessity}
  Let $t_0$ be strictly preperiodic under $m_2$ and suppose $F_a$ is the unique element of the limb $\Limb_{t_0}^\pm$. If $F_b$ is a postcritically finite polynomial in $\Sone$ then $F_a$ and $F_b$ are not mateable if and only if $F_b$ is the unique element of the conjugate limb $\Limb_{-t_0}^\pm$.
 \end{thm}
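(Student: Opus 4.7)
The ``if'' direction is immediate from Proposition~\ref{p:conjugate}. For ``only if'', suppose $F_a \uplus F_b$ is obstructed (that is, not essentially mateable). Since $F_a$ has strictly preperiodic internal argument, $F_a$ has no biaccessible periodic points in $J(F_a)$; by Corollary~\ref{c:postcritfinmatings}, the obstruction contains a degenerate Levy cycle $\Gamma$, and by the definition of essential mateability this $\Gamma$ must be non-removable. Corollary~\ref{cor:twocritvals} then gives a ray class $[x]$ in the limit set of $\Gamma$ containing two critical values of $F = F_a \uplus F_b$. Since the fixed critical points $a, b$ lie in the interiors of $U_a, U_b$ respectively, no external rays land on them and they form singleton ray classes. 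Thus the two critical values in $[x]$ are the free critical values $v_a$ and $v_b$.

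Next I trace the structure through $F^{-1}$. Let $D$ be the disk bounded by the Levy curve corresponding to $[x]$, so that $v_a, v_b \in D$ and $F^{-1}(D)$ has a non-disk component. Riemann-Hurwitz yields $\chi(F^{-1}(D)) = 3 - 2 = 1$, with total ramification $2$ coming from $-a, -b \in F^{-1}(D)$. A case analysis of the possible component types shows the only non-disk option is a disjoint union of a single disk (the homeomorphic degree-$1$ lift of $D$, corresponding to the previous Levy cycle element) with an annulus doubly covering $D$, branched at both $-a$ and $-b$. On the level of ray classes, this annulus yields a preimage ray class $[y]$ containing both $-a$ and $-b$, joined by a closed loop of external rays.

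The combinatorial rigidity at $-a$ then drives the conclusion. The unique parameter ray $\pray(\theta)$ landing at $F_a$ forces $R_a(\theta+1/3)$ and $R_a(\theta-1/3)$ to be the only external rays landing at $-a$. Under the identification $t \sim -t$ at infinity, these connect to the rays $R_b(-\theta-1/3)$ and $R_b(-\theta+1/3)$ in $F_b$'s plane. The closed loop in $[y]$ must return through $-b$; combined with the constraint that biaccessible points of $J(F_a)$ all lie on $\partial U_a$ (since the Hubbard tree lies in $\overline{U}_a$), this forces the loop to close directly, so that $R_b(-\theta-1/3)$ and $R_b(-\theta+1/3)$ both land at $-b$. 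Indeed, two external rays of $F_b$ whose angles differ by exactly $1/3$ map under $F_b$ to the same ray $R_b(-3\theta)$, and so can co-land only where $F_b$ is not locally injective; the only such point in $J(F_b)$ is the free critical point $-b$, confirming the co-landing.

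Applying $F_b$ to this co-landing, the ray $R_b(-\theta)$ lands at the cocritical point $2b$, which by Definition~\ref{d:extray} places $F_b$ on the parameter ray $\pray(-\theta)$. Since $\pray(-\theta)$ lands at the root of the conjugate limb $\Limb_{-t_0}^\pm$ and $F_b$ is postcritically finite, $F_b$ is the unique element of this limb. The main obstacle in this plan is establishing the ``direct closure'' of the loop at $-a, -b$, that is, ruling out intermediate biaccessible detours on the $F_b$ side: this requires matching the two boundary loops of the annulus combinatorially with the ray-arcs $R_a(\theta \pm 1/3) \leftrightarrow R_b(-\theta \mp 1/3)$ identified at infinity, and invoking the tree-like structure of ray classes for degenerate Levy cycles (Proposition~\ref{p:limset}) to exclude additional rays in $F_b$'s plane appearing in the closure chain.
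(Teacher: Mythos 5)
Your overall architecture tracks the paper's: reduce via Corollary~\ref{c:postcritfinmatings} and Corollary~\ref{cor:twocritvals} to a ray class containing both free critical values, pass to its preimage ray class containing $-a$ and $-b$, and exploit the fact that $R_a(\theta-1/3)$ and $R_a(\theta+1/3)$ are the only rays landing at $-a$. You run the last step forwards (loop $\Rightarrow$ both rays co-land at $-b$ $\Rightarrow$ $F_b$ conjugate) where the paper argues the contrapositive, but these are equivalent. However, there is a genuine gap exactly where you flag ``the main obstacle'': you never prove that a closed loop in the ray class $[-a]$ forces \emph{both} of $R_b(-(\theta-1/3))$ and $R_b(-(\theta+1/3))$ to land at $-b$. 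Your local-injectivity observation answers a different question (given that these two rays co-land, they must do so at the critical point $-b$); it does not show that they co-land at all. Worse, the justification you do offer rests on a false statement: the biaccessible points of $J(F_a)$ do \emph{not} all lie on $\partial U_a$ --- the free critical point $-a$ is itself biaccessible and lies in the critical limb, and for strictly preperiodic $t_0$ the relevant fact is the opposite one, namely that $F_a$ has no biaccessible \emph{periodic} points (so that, in particular, no point of $\partial U_a$ in the relevant ray classes is biaccessible). Even if your statement were true it would not exclude the loop detouring through biaccessible points of $\partial U_a$.

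The missing step is precisely Lemma~\ref{l:v_boppositev_a}, part (v), of the paper. The correct argument runs: since $F_a$ has no biaccessible periodic points and $[v_a]$ is a preperiodic ray class mapping forward homeomorphically, every point of $[-a]\cap J(F_a)$ other than $-a$ is the landing point of exactly one external ray. Viewing $[-a]$ as a connected bipartite graph with parts $[-a]\cap J(F_a)$ and $[-a]\cap J(F_b)$, any cycle alternates sides and every vertex on the cycle has degree at least two in the graph; hence the only admissible $J(F_a)$-vertex on a cycle is $-a$, the cycle is a $2$-cycle using both rays $R_a(\theta\pm 1/3)$, and both opposite rays land at a common point of $J(F_b)$, which your injectivity remark then identifies as $-b$. (Equivalently, the paper's edge count shows the graph is a tree whenever only one of the two angles matches.) With that inserted your proof closes; note also that the conclusion ``$R_b(-\theta)$ lands at $2b$'' should identify $F_b$ as the \emph{landing point} of $\pray(-\theta)$, not as a point lying on that parameter ray.
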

 
 The following lemma contains important properties of the ray classes in the matings considered in the proof of Theorem~\ref{t:preperneccessity}.
 
 \begin{lem}\label{l:v_boppositev_a}
  Let $t_0$ be strictly preperiodic under $m_2$ and suppose $F_a$ is the unique element of the limb $\Limb_{t_0}^\pm$. Let $F_b$ be a postcritically finite polynomial in $\Sone$ such that, in the mating $F_a \uplus F_b$, the free critical values $v_a$ and $v_b$ of $F_a$ and $F_b$ respectively belong to the same ray class $[v_a]$. Then:
  \begin{enumerate}
   \item if $R_{a}(\theta)$ is the ray landing on $v_a$, then $R_{b}(-\theta)$ lands on $v_b$.
   \item there exists an angle $\eta$ such that $R_a(\eta)$ lands on the free critical point $-a$ of $F_a$ and $R_b(-\eta)$ lands on the free critical point $-b$ of $F_b$.
   \item $v_b$ is the only point in the ray class $[v_a]$ which belongs to $J(F_b)$.
   \item the free critical points $-a$ and $-b$ of $F_a$ and $F_b$ respectively belong to the same ray class $[-a]$. 
   \item the ray class $[-a]$ containing the two free critical points contains a closed loop if and only if there exist distinct $\eta$, $\eta'$ such that $R_a(\eta)$ lands together with $R_a(\eta')$ on $-a$ and $R_b(-\eta)$ lands together with $R_b(\eta')$ on $-b$.
   \end{enumerate} 
 \end{lem}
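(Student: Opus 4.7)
I would treat all five statements together, using the rigidity of $F_a\in\partial\Hy_0$ (strictly preperiodic $t_0$) as the main ingredient. Setup: $-a\in\partial U_a$ gives $v_a=F_a(-a)\in\partial U_a$, and by the preperiodic form of Proposition~\ref{prop:limblanding} there is a unique parameter ray $\pray(\theta_0)$ landing at $F_a$, so that the dynamical rays $R_a(\theta_0+1/3)$ and $R_a(\theta_0-1/3)$ land together at $-a$ and both map under $F_a$ to a single ray, of angle $3\theta_0$, landing at $v_a$. The cocritical point $2a$ is the unique regular preimage of $v_a$, landed by $R_a(\theta_0)$. Analogous facts hold for $F_b$.

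\emph{Parts (i) and (iii).} Assume $R_a(\theta)$ lands at $v_a$. The identification at the circle at infinity glues $R_a(\theta)$ on the $\OC_f$ hemisphere to $R_b(-\theta)$ on the $\OC_g$ hemisphere, so $R_a(\theta)\cup R_b(-\theta)$ is a single arc $\Lambda$ in $\Sigma_{F_a,F_b}$ with endpoints $v_a$ and some $w\in J(F_b)\cap[v_a]$. I would prove (iii) first, by contradiction: if there were a second point $w'\in J(F_b)\cap[v_a]$ distinct from $v_b$, walking along the ray-equivalence graph between $v_a$ and $w'$ would require additional biaccessible points in $J(F_a)\cup J(F_b)$ whose forward orbits would collide with the postcritical sets in a configuration forbidden by the rigidity of $F_a$ (no biaccessible periodic points, Hubbard tree $\subseteq\overline U_a$). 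Once (iii) is established, the point $w$ must be $v_b$, which gives (i).

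\emph{Parts (ii) and (iv).} Pulling back the arc $\Lambda$ under $F=F_a\uplus F_b$ (degree $3$), the three preimages in the $\OC_f$ hemisphere are the rays $R_a(\eta_i)$ with $\{\eta_0,\eta_1,\eta_2\}=m_3^{-1}(\theta)$; two of them (namely $\theta_0\pm 1/3$) land at $-a$ and the third lands at $2a$. By the parallel analysis on the $F_b$ side, exactly two of the three preimage rays of $R_b(-\theta)$ land at $-b$, the third landing at $2b$. The identification at infinity matches $R_a(\eta_i)$ with $R_b(-\eta_i)$ index by index; since $|\{i:R_a(\eta_i)\text{ lands at }-a\}|+|\{i:R_b(-\eta_i)\text{ lands at }-b\}|=4>3$, pigeonhole produces at least one $i$ for which both conditions hold. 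Setting $\eta=\eta_i$ proves (ii), and the arc $R_a(\eta)\cup R_b(-\eta)$ directly witnesses $-b\in[-a]$, proving (iv).

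\emph{Part (v) and main obstacle.} The ``$\Leftarrow$'' direction of (v) is immediate: two distinct pairings $\eta\neq\eta'$ produce two internally disjoint arcs in $\Sigma_{F_a,F_b}$ joining $-a$ to $-b$, whose union is a simple closed curve inside $[-a]$. For ``$\Rightarrow$'', note that by (iii) applied at the postcritical level one has $[-a]\cap J(F_a)\subseteq F_a^{-1}(v_a)=\{-a,2a\}$ and $[-a]\cap J(F_b)\subseteq F_b^{-1}(v_b)=\{-b,2b\}$, while $2a$ and $2b$ are each landed by a unique external ray. A closed loop in $[-a]$ therefore has only limited structural possibilities, and a short case analysis of the available arcs between these four vertices forces the existence of distinct $\eta,\eta'$ with $R_a(\eta),R_a(\eta')$ landing at $-a$ and $R_b(-\eta),R_b(-\eta')$ landing at $-b$. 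The main obstacle in the whole lemma is (iii); ruling out extra points of $J(F_b)$ in $[v_a]$ is precisely where the strict preperiodicity of $t_0$ and the placement of $F_a$ on $\partial\Hy_0$ are essential, since these ensure the ray-class structure at $v_a$ is minimal and cannot ramify further into $F_b$.
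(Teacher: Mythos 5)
Parts (i)--(iv) of your plan track the paper's proof in all essentials: (i) and (iii) both come down to the fact that an extra point in the ray class forces a biaccessible point of $J(F_a)$ whose forward iterates produce a biaccessible periodic point, contradicting the rigidity of the preperiodic boundary map $F_a$ (proving (iii) before (i) is a harmless reordering); (ii) is the same pigeonhole count on the three preimage rays; and your proof of (iv) --- the glued arc $R_a(\eta)\cup R_b(-\eta)$ directly joining $-a$ to $-b$ --- is in fact cleaner than the paper's degree-counting contradiction.

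The genuine gap is in the forward direction of (v). You assert that (iii) ``applied at the postcritical level'' gives $[-a]\cap J(F_a)\subseteq F_a^{-1}(v_a)=\{-a,2a\}$, but (iii) controls only $[v_a]\cap J(F_b)$; it says nothing about $[v_a]\cap J(F_a)$. If $v_b$ is biaccessible in $J(F_b)$ --- equivalently, if the cocritical point $2b$ is the landing point of more than one external ray, which is entirely possible since $F_b$ is an arbitrary postcritically finite map in $\Sone$ and not the rigid boundary map --- then $[v_a]\cap J(F_a)$ contains, besides $v_a$, the landing points of $R_a(-s)$ for every ray $R_b(s)$ landing on $v_b$. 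In that case $[-a]\cap J(F_a)$ is not contained in $\{-a,2a\}$ and your ``four vertices'' do not exhaust the ray class; the same unjustified assumption is hidden in your setup under ``analogous facts hold for $F_b$.'' The paper's argument is built precisely to avoid this: it shows (by the same no-biaccessible-periodic-point rigidity) that every point of $[-a]\cap J(F_a)$ other than $-a$ is the landing point of exactly one ray, uses (iii) only to conclude $[-a]\cap J(F_b)\subseteq\{-b,2b\}$, and then counts edges in the resulting connected bipartite graph --- $2+(n-3)=n-1$ edges on $n$ vertices --- to conclude it is a tree. You need that uniaccessibility claim and an edge count of this kind; the ``short case analysis of the available arcs between these four vertices'' rests on a false description of the vertex set.
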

 
 \begin{proof}\mbox{}
 \begin{enumerate}
  \item Suppose $R_{F_b}(-\theta)$ does not land on $v_b$. Since $R_{a}(\theta)$ is the unique external ray landing on $v_a$, it follows that there must be a biaccessible point $z$ in $[v_a] \cap J(F_a)$. By assumption, the free critical point $-a$ of $F_a$ is not periodic, and so the ray class $[v_a]$ maps forward homeomorphically under iterates of the mating. In particular, there is a forward iterate of $z$ which is a biaccessible periodic point in $J(F_a)$. But since $F_a$ has no biaccessible periodic point, this is a contradiction.
  \item Let $R_a(\theta)$ be the ray landing on $v_a$. By part (i), $R_b(-\theta)$ lands on $v_b$. Consider the three preimages of $R_a(\theta)$ under $F_a$. Two of these preimage rays must land on $-a$. Similarly, two of the three preimages of $R_b(-\theta)$ must land on $-b$. By the pigeonhole principle, there must then exist some $\eta$ such that $R_a(\eta)$ lands on $-a$ and $R_b(-\eta)$ lands on $-b$.
\item  To see that $v_b$ is the only element of the ray class that belongs to $J(F_b)$, first observe that if there is another element of $J(F_b)$ in the ray class, then there must also be a biaccessible point of $J(F_a)$ in the ray class. Similarly to the above, we can then map the ray class forward and obtain a biaccessible periodic point in $J(F_a)$. Again this is a contradiction.
\item Suppose that $[-a]$ and $[-b]$ are distinct ray classes. Both of these ray classes map onto $[v_a]$ by a degree $2$ mapping, and so elements of $[v_a]$ have at least four preimages under $F_a \uplus F_b$. But the mating has degree $3$, so this is a contradiction and so the ray classes $[-a]$ and $[-b]$ are equal. 
\item If there exist $\theta$ and $\theta'$ such that $R_a(\theta)$ and $R_a(\theta')$ land on $-a$ and $R_b(-\theta)$ and $R_b(\theta')$ land on $-b$, then (the closure of) these rays form a closed loop within the ray class $[-a]$. 

Now suppose that there is only one angle $\eta$ such that $R_a(\eta)$ lands on $-a$ and $R_b(\eta)$ lands on $-b$ (there must be at least one such $\eta$ by part (ii)). Then we see that both $-b$ and $2b$ both belong to $[-a]$, and these are the only two elements in $[-a] \cap J(F_b)$, since $v_b$ is the only element of $J(F_b)$ in $[v_a]$ (part (iii)). Furthermore, suppose $z \in J(F_a) \cap [-a]$ is an element distinct from $-a$. We claim $z$ is the landing point of only one external ray. For if not, similar reasoning as that used in part (i) would imply that a forward iterate of $z$ is a biaccessible periodic point for $F_a$, which is impossible.

 We now think of the ray class $[-a]$ as a connected bipartite graph with parts $A = [-a] \cap J(F_a)$ and $B = [-a] \cap J(F_b) = \{-b,2b\}$, and such that there is an edge between $x \in A$ and $y \in B$ if and only if there exists $t$ such that $R_a(t)$ lands on $x$ and $R_b(-t)$ lands on $y$. Suppose the number of vertices in this graph is $n$. Then since $B = \{-b,2b\}$,we see there are $n-2$ elements of $A$. Since the graph is bipartite, the number of edges in the graph is equal to the sum of the degrees (in the graph theory sense) of the vertices in $A$. But $-a$ has degree $2$, and it was shown above that each of the other $n-3$ vertices in $A$ has degree $1$. Thus there are $n-1$ edges in the graph. Since a connected graph with $n$ vertices is a tree if and only it has $n-1$ edges, we see that the graph does not contain a closed loop.
\end{enumerate}
 \end{proof}
 

 \begin{proof}[Proof of Theorem~\ref{t:preperneccessity}]
 If $F_b$ is the conjugate map to $F_a$ then the mating is obstructed by Proposition~\ref{p:conjugate}.
 
 To prove the converse, we will show that if $F_b$ is not the conjugate map to $F_a$, then any obstruction to the mating $F_a \uplus F_b$ is a removable Levy cycle. First note that by Corollary \ref{c:postcritfinmatings}, any obstruction to this mating must be a degenerate Levy cycle, and if this Levy cycle is not removable, it follows from Corollary~\ref{cor:twocritvals} that there must be a ray class which contains both $v_a$ and $v_b$, the free critical values of $F_a$ and $F_b$ respectively. Let $\pray(\theta)$ be the unique parameter ray landing on $F_a$. Then $R_a(\theta)$ is the only ray landing on the cocritical point $2a \in J(F_a)$. Taking the forward image, we see that $R_a(3\theta)$ is the only ray which lands on the free critical value $v_a$ of $F_a$ and so $R_a(\theta-1/3)$ and $R_a(\theta+1/3)$ are the only rays landing on the free critical point $-a$. According to Lemma~\ref{l:v_boppositev_a} (part (i)), since $v_a$ and $v_b$ to belong to the same ray class $[v_a]$, then $v_b$ is the landing point of $R_b(-3\theta)$. This means that the cocritical point $2b$ of $F_b$ must be the landing point of one of $R_b(-\theta)$, $R_b(-(\theta-1/3))$ or $R_b(-(\theta+1/3))$. Hence, if $v_a$ and $v_b$ belong to the same ray class, then $F_b$ is equal to the landing point of one of $\pray(-\theta)$, $\pray(-(\theta-1/3))$ or $\pray(-(\theta+1/3))$. Furthermore, again by Lemma~\ref{l:v_boppositev_a} (part (iii)), the only element of $J(F_b)$ in $[v_a]$ is $v_b$, and the preimage of the ray class $[v_a]$ contains a ray class $[-a]$ which contains both free critical points (part (iv)).
 
 If $F_b$ is not the conjugate map to $F_a$, then $F_b$ is not the landing point of $\pray(-\theta)$. Thus it remains to show that if $F_b$ is the landing point of $\pray(-(\theta-1/3))$ or $\pray(-(\theta+1/3))$, then no ray class in the mating $F_a \uplus F_b$ which contains a closed loop. In fact, it suffices to show that this is the case for the ray class $[-a]$ which contains both $-a$ and $-b$, since all other ray classes map homeomorphically onto their image, and the ray class $[v_a]$ does not contain a closed loop by Proposition~\ref{p:limset}. We will assume $F_b$ is the landing point of $\pray(-(\theta-1/3))$; the proof of the other case is similar. We may also assume $F_b$ is not parabolic, since if $F_b$ is a parabolic map then $v_b$ does not belong to the Julia set of $F_b$.

Thus $R_b(-(\theta-1/3))$ lands on the cocritical point $2b$ and therefore the rays $R_b(-\theta)$ and $R_b(-(\theta+1/3))$ land on the free critical point $-b$. Since $R_a(\theta)$ lands on $2a$ and $R_a(\theta-1/3)$ and $R_a(\theta+1/3)$ are the only rays which land on $-a$, we see that there is only the angle $\eta = \theta + 1/3$ for which $R_a(\eta)$ lands on $-a$ and $R_b(-\eta)$ lands on $-b$. By Lemma~\ref{l:v_boppositev_a} (part (v)), the ray class does not contain a closed loop, and so $F_a$ and $F_b$ are mateable.
\end{proof}
 
  \begin{figure}[!ht]
\centering
\begin{subfigure}{0.57\textwidth}
\centering
\includegraphics[width=0.88\linewidth]{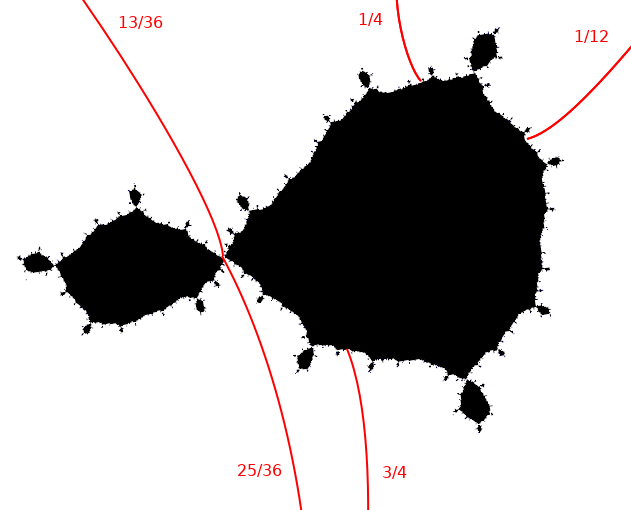}
\label{f:1over72}
\end{subfigure}\quad%
\begin{subfigure}{0.4\textwidth}
\centering
\includegraphics[width=0.87\linewidth]{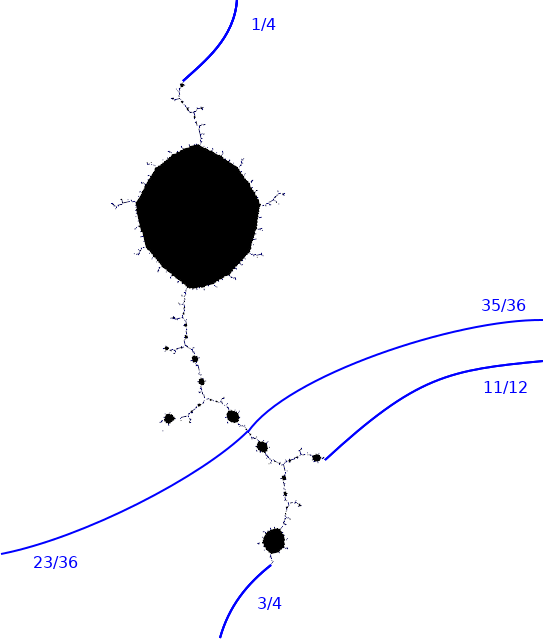}
\label{f:22over72}
\end{subfigure}%

\begin{subfigure}{0.4\textwidth}
\centering
\includegraphics[width=0.8\linewidth]{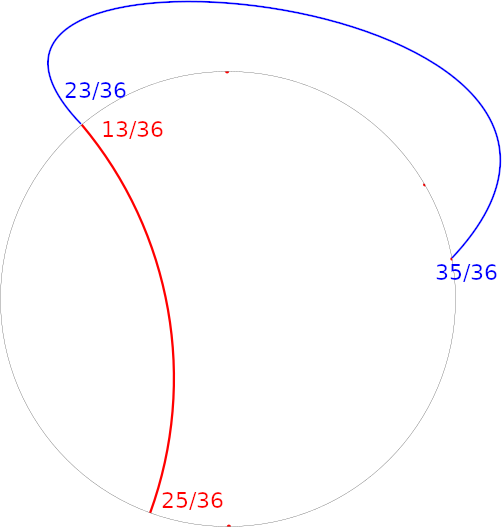}
\label{f:doublecritex}
\end{subfigure}\quad%
\begin{subfigure}{0.5\textwidth}
\centering
\includegraphics[width=0.9\linewidth]{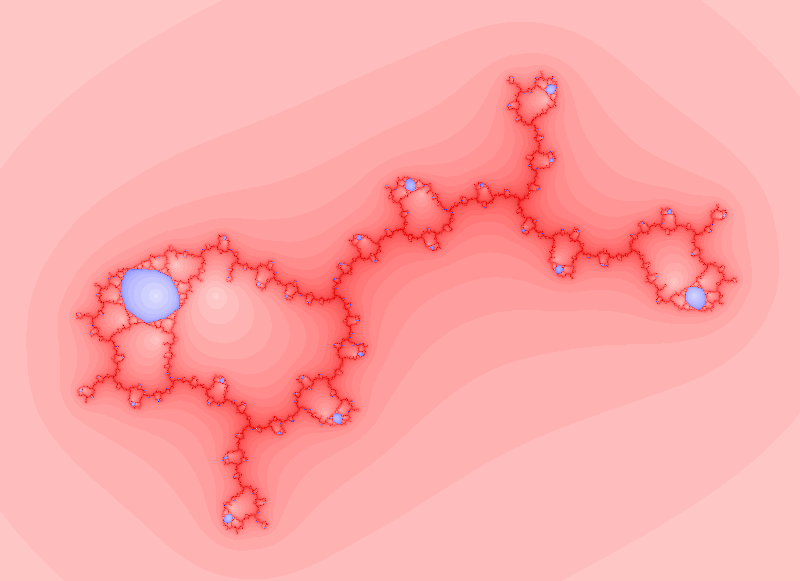}
\label{f:cubicratmap}
\end{subfigure}%
\caption{An example where the two critical points are identified under the mating. The upper images show the two polynomials $F_a$ and $F_b$ in the mating of Example~\ref{ex:critptsidentified}, and the second row of images shows the lamination of the mating and the corresponding rational map, which has a double critical point (which can be seen just to the left of the centre of the image).} 
\label{f:DoubleCritExample} 
\end{figure}
 
  As the above proof suggests, it is possible that $F_a$ and $F_b$ to be (essentially) mateable even if there is a ray equivalence class containing both the free critical points $-a \in J(F_a)$ and $-b \in J(F_b)$. In this case, these simple critical points combine to become a double critical point for the resulting rational map. The author is grateful to Daniel Meyer for pointing out that this phenomenon is quite common when one considers matings involving polynomials having critical points in their Julia set.
 
 \begin{ex}\label{ex:critptsidentified}
 Consider the map $F_a$ which is the landing point of $\pray(1/36)$. This is the unique map in the limb $\Limb_{7/12}^+$. Now consider the mating of this map with $F_b$, the map which is the landing point of $\pray(11/36)$, which belongs to the limb $\Limb_0^-$ (see Figure~\ref{f:DoubleCritExample}). A simple calculation shows that $R_a(13/36)$ and $R_a(25/36)$ land on $-a$, the free critical point of $F_a$, while $R_b(23/36)$ and $R_b(35/36)$ land on $-b$, the free critical point of $F_b$. Furthermore, $R_a(1/12)$ lands on $v_a$ and $R_b(11/12)$ lands on $v_b$, so the two critical values belong to the same ray class (and the boundary of a tubular neighbourhood of this ray class generates a removable Levy cycle). However, the topological mating of these two polynomials exists, and is Thurston equivalent to the rational map whose Julia set is shown in the bottom right image of Figure~\ref{f:DoubleCritExample}. 
\end{ex}

\bibliographystyle{plain}
\bibliography{suffici}

\end{document}